\pgfplotsset{compat=1.15}
\newtheorem{theorem}{Theorem}[section]
\newtheorem{definition}[theorem]{Definition}
\newtheorem{lemma}[theorem]{Lemma}
\newtheorem{proposition}[theorem]{Proposition}
\newtheorem{corollary}[theorem]{Corollary}
\newtheorem{problem}[theorem]{Problem}
\newtheorem{question}[theorem]{Question}
\newtheorem{observation}[theorem]{Observation}
\theoremstyle{remark}
\DeclareMathOperator{\conv}{conv}
\newcommand\R{\mathbb{R}}
\newcommand{\F}{{\mathcal F}}
\newcommand{\St}{{\mathcal St}}
        \newcommand{\eps}{\varepsilon}
        \newcommand{\HH}{{\mathcal H}}
        \renewcommand{\H}{\HH^1}
        \newcommand{\forget}[1]{}
        \def\dist{\mathrm{dist}\,}
        \def\diam{\mathrm{diam}\,}
        \def\X{\mathrm{X}}
        \def\E{\mathrm{E}}
        \def\S{\mathrm{S}}
        \def\G{\mathrm{G}}
        \def\diam{\mathrm{diam}\,}
\title{Inverse maximal and average distance minimizer problems}
\author[1]{Mishanya Basok}
\author[2,4]{Danila Cherkashin}
\author[3,4]{Yana Teplitskaya}
\affil[1]{University of Helsinki, Finland}
\affil[2]{Institute of Mathematics and Informatics, Sofia}
\affil[3]{Mathematical Institute, Leiden University, the Netherlands}
\affil[4]{Chebyshev Laboratory, St.Petersburg}
\begin{document}

\maketitle

\begin{abstract}
Consider a compact $M \subset \mathbb{R}^d$ and $r > 0$. A maximal distance minimizer problem is to find a connected compact set $\Sigma$ of the minimal length, such that
\[
\max_{y \in M} \dist (y, \Sigma) \leq r.
\]
The inverse problem is to determine whether a given compact connected set $\Sigma$ is a minimizer for some compact $M$ and some positive $r$.

Let a Steiner tree $\St$ with $n$ terminals be unique for its terminal vertices. 
The first result of the paper is that $\St$ is a minimizer for a set $M$ of $n$ points and a small enough positive $r$.
It is known that in the planar case a general Steiner tree (on a finite number of terminals) is unique. 
It is worth noting that a Steiner tree on $n$ terminal vertices can be not a minimizer for any $n$ point set $M$ starting with $n = 4$;
the simplest such example is a Steiner tree for the vertices of a square.

It is known that a planar maximal distance minimizer is a finite union of simple curves. 
The second result is an example of a minimizer with an infinite number of corner points (points with two tangent rays which do not belong to the same line), which means that this minimizer can not be represented as a finite union of smooth curves.

Our third result is that every injective $C^{1,1}$-curve $\Sigma$ is a minimizer for a small enough $r>0$ and $M = \overline{B_r(\Sigma)}$.
The proof is based on analogues result by Tilli on average distance minimizers.
Finally, we generalize Tilli's result from the plane to $d$-dimensional Euclidean space.

\end{abstract}

\section{Introduction}

For a given compact sets $M, \Sigma \subset \mathbb{R}^d$ consider the functional
\[
	\F_{M}(\Sigma) := \sup _{y\in M}\dist (y, \Sigma),
\]
where $\dist$ stands for Euclidean distance.
The following problems appeared in~\cite{buttazzo2002optimal} and later has been studied in~\cite{miranda2006one,paolini2004qualitative}.
\begin{problem}[Maximal distance minimizing problem]
For a given compact set $M \subset \R^d$ and $r > 0$ to find a connected compact set $\Sigma$ of the minimal length (one-dimensional Hausdorff measure $\H$) such that
\[
\F_{M}(\Sigma) \leq r.
\]
\label{TheProblem}
\end{problem}


\begin{problem}[Average distance minimizing problem]
Let $\varphi: [0,+\infty)\to [0,+\infty)$ be any non-decreasing function. Given a bounded open set $\Omega\subset \mathbb R^d$ and a real number $l>0$ consider the problem of minimizing the functional
\[
\int_\Omega \varphi(\dist(x,\gamma))\,dx
\]
over all compact subsets $\gamma$ contained in the closure of $\Omega$ and having the length (1-dimensional Hausdorff measure) at most $l$.
\label{TheAverage}
\end{problem}

We call a solution of Problem~\ref{TheProblem} an $r$-\textit{minimizer} for $M$, and a solution of Problem~\ref{TheAverage} an \textit{average distance minimizer}.
Surveys on Problems~\ref{TheProblem} and~\ref{TheAverage} may be found in~\cite{cherkashin2022overview} and~\cite{lemenant2011presentationENG}, respectively.
Here we focus on the following inverse problems.
\begin{problem}
For a given connected compact set $\Sigma  \subset \mathbb{R}^d$ determine if it is a maximal distance minimizer for some compact $M$ and $r > 0$.
\label{InverseProblem}
\end{problem}

\begin{problem}
For a given connected compact set $\Sigma  \subset \mathbb{R}^d$ and a non-decreasing $\varphi: [0,+\infty)\to [0,+\infty)$ determine if $\Sigma$ is an average distance minimizer for some open bounded $\Omega$ and $l = \H(\Sigma)$.
\label{InverseAverageProblem}
\end{problem}

Let $B_\rho(O)$ stand for the open ball of radius $\rho$ centered at a point $O$, and let $B_\rho(T)$
be the open $\rho$-neighbourhood of a set $T$ i.e.
\[
B_\rho(T) := \bigcup_{x\in T} B_\rho(x).
\]
As usual, $\overline{T}$ stands for the closure of a set $T$. 
Note that the condition $\max_{y \in M} \dist (y, \Sigma) \leq r$ is equivalent to $M \subset \overline{B_r(\Sigma)}$.

We start with a simple observation.

\begin{observation}
\begin{itemize}
    \item[(i)] Let $\Sigma$ be an $r$-minimizer for some $M$. Then $\Sigma$ is an $r$-minimizer for $\overline{B_r(\Sigma)}$.
    \item[(ii)] Let $\Sigma$ be an $r$-minimizer for $\overline{B_r(\Sigma)}$. Then $\Sigma$ is an $r'$-minimizer for $\overline{B_{r'}(\Sigma)}$, for every $0 < r' < r$.
\end{itemize}
\label{Obs:MisBrSigma}
\end{observation}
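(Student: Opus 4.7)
The strategy for both parts is the same: show that any admissible competitor for the new minimization problem is automatically admissible for the problem whose minimality is assumed, so that the known minimality of $\Sigma$ transfers directly. In each case $\Sigma$ is trivially admissible for the new problem since $\overline{B_r(\Sigma)} \subset \overline{B_r(\Sigma)}$ and $\overline{B_{r'}(\Sigma)} \subset \overline{B_{r'}(\Sigma)}$, so only the length-comparison side requires work.

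Part (i) is immediate. Given any connected compact $\Sigma'$ with $\overline{B_r(\Sigma)} \subset \overline{B_r(\Sigma')}$, the hypothesis $M \subset \overline{B_r(\Sigma)}$ forces $M \subset \overline{B_r(\Sigma')}$; hence $\Sigma'$ is a competitor for the original $r$-problem on $M$, and its length is bounded below by $\H(\Sigma)$ by the assumed minimality.

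Part (ii) requires one metric identity. For a compact $S \subset \R^d$ and $0 < r' < r$, I would first check
\[
\overline{B_r(S)} \;=\; \overline{B_{r-r'}\bigl(\overline{B_{r'}(S)}\bigr)},
\]
which follows from the pointwise equality $\dist(y,\overline{B_{r'}(S)}) = \max\{0,\,\dist(y,S)-r'\}$; the latter holds because, for $y$ outside $\overline{B_{r'}(S)}$, the nearest point of $\overline{B_{r'}(S)}$ lies on the segment joining $y$ to a nearest point in $S$, at distance exactly $\dist(y,S)-r'$. Granted this identity, any admissible competitor $\Sigma'$ for the $r'$-problem on $\overline{B_{r'}(\Sigma)}$ satisfies $\overline{B_{r'}(\Sigma)} \subset \overline{B_{r'}(\Sigma')}$; enlarging both sides by $\overline{B_{r-r'}}$ and applying the identity to each side yields $\overline{B_r(\Sigma)} \subset \overline{B_r(\Sigma')}$. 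Thus $\Sigma'$ is admissible for the $r$-problem on $\overline{B_r(\Sigma)}$, and the assumed minimality of $\Sigma$ there delivers $\H(\Sigma)\le\H(\Sigma')$.

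No step is genuinely subtle; the only concrete verification is the Minkowski-type identity above, which is purely metric and follows from a one-line distance calculation. Thus I do not expect a real obstacle.
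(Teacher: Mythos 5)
Your proposal is correct and follows essentially the same route as the paper: part (i) is the same one-line inclusion argument, and part (ii) rests on the same Minkowski-type identity, which the paper states as $\overline{B_{a+b}(X)} = \overline{B_a(B_b(X))}$ without proof while you verify it via the distance formula $\dist(y,\overline{B_{r'}(S)}) = \max\{0,\dist(y,S)-r'\}$. No further comment needed.
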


\begin{proof}
Assume the contrary to the first item. Then $M \subset \overline{B_r(\Sigma)} \subset \overline{B_r(\Sigma')}$ for some connected compact $\Sigma'$ such that
$\HH(\Sigma') < \HH(\Sigma)$, which contradicts the fact that $\Sigma$ is an $r$-minimizer for $M$. 

Assume the contrary to the second item. Then $\overline{B_{r'}(\Sigma)} \subset \overline{B_{r'}(\Sigma')}$ for some connected compact $\Sigma'$ such that
$\HH(\Sigma') < \HH(\Sigma)$. Since $\overline{B_{a+b}(X)} = \overline{B_a(B_b(X))}$ for an arbitrary set $X \subset \mathbb{R}^d$ and $a,b > 0$, 
\[
\overline{B_{r}(\Sigma)} = \overline{B_{r-r'}(B_{r'}(\Sigma))} \subset \overline{B_{r-r'}(B_{r'}(\Sigma'))} = \overline{B_{r}(\Sigma')}.
\]
Thus $\Sigma$ is not an $r$-minimizer for $\overline{B_{r}(\Sigma)}$; a contradiction.
\end{proof}

This motivates the following open question.
\begin{question}
Let $\Sigma$ be an $r$-minimizer for some $M$. Is $\Sigma$ the unique $r$-minimizer for $\overline{B_r(\Sigma)}$?
\end{question}
A weaker form of this question is if we replace $r$ with some positive $r_0 < r$ in the hypothesis.

Also we need some basic facts on $r$-minimizers (see~\cite{miranda2006one} for planar $M$ and~\cite{PaoSte04max} for an arbitrary dimension). 
Let $\Sigma$ be a minimizer for a compact set $M \subset \mathbb{R}^d$ and $r > 0$.
A point $x \in \Sigma$ is called \textit{energetic} if for every $\varepsilon > 0$ the inequality
\[
\F_{M}(\Sigma \setminus B_{\varepsilon}(x)) > \F_{M}(\Sigma)
\]
holds. The set of all energetic points of $\Sigma$ is denoted by $\G_\Sigma$.
Every $r$-minimizer $\Sigma$ can be split into three disjoint subsets:
\[
        \Sigma=\E_{\Sigma}\sqcup\X_{\Sigma}\sqcup\S_{\Sigma},
\]
where $\X_\Sigma \subset \G_\Sigma$ is the set of \textit{isolated energetic} points (i.e. every $x \in \X_\Sigma$ is energetic and there is a $\rho > 0$ such that $B_\rho(x) \cap \G_\Sigma = \{x\}$), $\E_\Sigma := \G_\Sigma \setminus \X_\Sigma$ is the set of \textit{non-isolated energetic}
points and $\S_\Sigma := \Sigma \setminus \G_\Sigma$ is the set of non-energetic points also called the \textit{Steiner part} of $\Sigma$.

The following statements will be further referred as \textit{basic properties}       
\begin{itemize}
    \item[(a)] minimizers contain no cycles (homeomorphic images of circumference). 
    \label{a}
    \item[(b)] For every energetic $x \in \G_\Sigma$ there is a point $y \in M$, such that $|x-y|=r$ and $B_{r}(y)\cap \Sigma=\emptyset$. 
    Further we call $y$ \textit{corresponding} to $x$ and denote by $y(x)$. Note that a corresponding point may be not unique.
    \label{b}
    \item[(c)] For every non-energetic $x \in \S_\Sigma$ there is an $\varepsilon > 0$, such that $\Sigma \cap B_{\varepsilon}(x)$ is either a segment or a \textit{regular tripod}, i.e. the union of three line segments with an endpoint in $x$ and relative angles of $2\pi/3$.
    \label{Sbasis}
\end{itemize}

If $x \in \S_\Sigma$ is the center of a regular tripod, we call it \textit{branching point} of $\Sigma$.
\begin{definition}\label{def:tgray}
We say that the ray $ (ax] $ is a \textit{tangent ray} of a set $ \Gamma \subset \mathbb{R}^d $
at a point $ x\in \Gamma $ if there exists a
sequence of points $ x_k \in \Gamma \setminus \{x\}$ such that $ x_k \rightarrow x $ and $ \angle x_kxa \rightarrow 0 $.
\end{definition}
\begin{definition}
We will say that the ray $ (ax] $ is a \textit{one-sided tangent} of a set $ \Gamma  \subset \mathbb{R}^n $ at a point $ x \in \Gamma $ if there exists a connected component $\Gamma_1$ of $\Gamma \setminus \{x\}$ such that $x \in \overline{\Gamma_1}$ and that any sequence of points $x_k \in \Gamma_1$ with the property $x_k \rightarrow x$ satisfies $\angle x_kxa \rightarrow 0$.
In this case we will also say that $(ax]$ is tangent to the connected component $\Gamma_1$.
\end{definition}

\begin{theorem}[Gordeev--Teplitskaya~\cite{gordeev2022regularity}]
\label{theoGT}
Let $\Sigma$ be a solution of Problem~\ref{TheProblem} for a compact set $M \subset \mathbb{R}^d$ and $r > 0$. Assume that $\Sigma$ is not a point. Then $\Sigma$ has the following properties:
\begin{enumerate}
    \item For each point $x\in \Sigma$ the complement $\Sigma\smallsetminus\{x\}$ has at most 3 connected components and the closure of each connected component has a unique tangent ray (which is a one-sided tangent therefore) at $x$. If the number of connected components is 3, then the angle between each pair of tangent rays is $2\pi /3$. If the number of components is 2, then the angle between the two tangent rays is at least $2\pi /3$. Particularly angles between one-sided tangents can not be equal to $0$.
    \item If $d = 2$, then $\Sigma$ can be written as a union of simple curves $\gamma_1,\ldots,\gamma_k$, where each $\gamma_i$ has one-sided tangent continuous from the corresponding side. More precisely, for each $i = 1,\dots, k$ and continuous parametrization $\gamma_i: [0,1]\to \mathbb R^2$ there exists a function $v_i: (0,1]\to S^1$ which is continuous from the left and such that for each $t\in (0,1]$ the ray starting at $\gamma_i(t)$ in the direction $v_i(t)$ is a left-sided tangent to $\gamma_i$, where ``left-sided'' is understood with respect to the orientation given by the parametrization.
\end{enumerate}

\end{theorem}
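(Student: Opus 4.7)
The plan is to analyze the local structure of $\Sigma$ separately at Steiner (non-energetic) points and at energetic points, then assemble these local pictures into the global decomposition in the planar case.

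At a Steiner point $x \in \S_\Sigma$, basic property (c) delivers item 1 directly: locally $\Sigma$ is either a segment through $x$ (one or two components with collinear tangent rays) or a regular tripod centered at $x$ (three components with pairwise angles $2\pi/3$). At an energetic point $x \in \G_\Sigma$, basic property (b) supplies $y(x) \in M$ with $|x - y(x)| = r$ and $B_r(y(x)) \cap \Sigma = \emptyset$, so $\Sigma$ is locally confined to the complement of the open ball $B_r(y(x))$, which is tangent to $\Sigma$ at $x$. An elementary geometric argument then forces every tangent ray of $\Sigma$ at $x$ to lie in the tangent hyperplane of $\partial B_r(y(x))$ at $x$: a direction with a positive component in the direction $y(x) - x$ would produce points of $\Sigma$ inside $B_r(y(x))$ on any sufficiently small scale, contradicting the avoidance.

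To prove uniqueness of the tangent ray on each connected component of $\Sigma \setminus \{x\}$ and to bound the number of components by $3$, I would use competitor arguments in the spirit of classical Steiner tree analysis. For uniqueness, two distinct cluster directions on a single component $\Gamma_j$ would allow one to replace a small zig-zag piece of $\Sigma$ near $x$ by a straight segment contained in the convex hull of the replaced piece; the resulting connected competitor is strictly shorter and, since each constraint ball $B_r(y(x'))$ is convex and disjoint from the replaced piece, still satisfies $\F_M \leq r$. For the bound $3$ and the angle conditions, one detaches the incident branches of $\Sigma$ at radius $\varepsilon$ from $x$ and reconnects the endpoints by a local planar Steiner tree: classical Steiner inequalities give that if four or more tangent rays were present, or if two rays subtended an angle strictly less than $2\pi/3$, then the new configuration would be strictly shorter for $\varepsilon$ small enough, without violating the maximal-distance constraint. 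When exactly three components meet, first-variation equilibrium forces the three tangent rays to make pairwise angles equal to $2\pi/3$.

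For item 2 in the plane, endpoints and branching points partition $\Sigma$ into arcs, each a simple curve $\gamma_i \colon [0,1] \to \mathbb{R}^2$; by item 1 a unique one-sided tangent direction $v_i(t)$ is defined at every point. Left-continuity of $v_i$ would be obtained by selecting a subsequential limit of the corresponding points $y(\gamma_i(t_n))$ as $t_n \uparrow t$ (using compactness of $M$), passing to the limit in the avoidance condition $B_r(y(\gamma_i(t_n))) \cap \Sigma = \emptyset$, and checking that the limiting tangent-hyperplane constraint at $\gamma_i(t)$, combined with the tree structure around $\gamma_i(t)$, pins down precisely the left-sided limit of the $v_i(t_n)$. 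The main obstacle, in my view, is treating energetic points that accumulate on other energetic points: here the corresponding points $y(\cdot)$ may depend discontinuously on position, and one must rule out oscillation of the tangent directions by leaning on the global minimality of $\Sigma$ rather than on the purely local constraint. Once this continuity is secured, everything else follows rather cleanly from the tangent-hyperplane constraint together with the Steiner-tree comparisons used above.
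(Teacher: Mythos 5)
You should first note that the paper does not prove this statement at all: Theorem~\ref{theoGT} is imported verbatim from Gordeev--Teplitskaya~\cite{gordeev2022regularity}, so your proposal has to stand on its own, and as it stands it has genuine gaps rather than being a compressed version of a known argument. The central problem is with every one of your competitor constructions. When you replace a ``zig-zag'' piece of $\Sigma$ near $x$ by a chord, or cut out $\Sigma\cap B_\varepsilon(x)$ and reconnect the boundary points by a local Steiner tree, you assert that the new set still satisfies $\F_M\leq r$ because the balls $B_r(y)$ are convex and disjoint from the replaced piece. That is not the issue: the removed piece may itself be (densely) populated by energetic points, each of which has a corresponding $y\in M$ at distance \emph{exactly} $r$ and with $B_r(y)\cap\Sigma=\emptyset$, so deleting that piece can push such $y$ strictly farther than $r$ from the competitor. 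Convexity of $B_r(y)$ gives no control over this, and in general the naive replacement is simply not admissible. Compensating for the lost energetic points (and showing that the length saved beats the length spent on compensation) is precisely where the real work of the regularity proof lies; your sketch assumes it away, both in the uniqueness-of-tangent-ray step and in the ``at most $3$ components, angles $\geq 2\pi/3$'' step.

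There are two further problems. At an energetic point $x$, the avoidance of $B_r(y(x))$ only forces every tangent ray into the closed half-space $\{v\colon v\cdot(y(x)-x)\leq 0\}$; it does not force it into the tangent hyperplane of $\partial B_r(y(x))$ at $x$ (a ray pointing away from the ball is perfectly allowed). So the ``tangent-hyperplane constraint'' you later invoke to pin down the left-sided limit of $v_i$ rests on a false premise. Finally, for item~2 you partition $\Sigma$ into arcs by its endpoints and branching points, but a connected compact set of finite length without loops can a priori have infinitely many branch points and endpoints; finiteness of this decomposition is itself part of what must be proved. And the left-continuity of the one-sided tangent, which is the analytic heart of item~2 (and the hardest part of~\cite{gordeev2022regularity}), is exactly the step you label ``the main obstacle'' and leave open, so the proposal does not amount to a proof of either item.
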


\paragraph{Structure of the paper.} The rest part of the introduction collects the notation and enlists definitions and some basic results in the Euclidean Steiner tree problem. Section 2 contains the results, Sections 3--5 contains proofs of the first, the second and the third results, mentioned in the abstract, respectively.

\subsection{Notation}

For given points $b$, $c$ we use the notation $[bc]$, $[bc)$ and $(bc)$ for the corresponding closed line
segment, ray and line respectively. We also use $[bc[$ for the semiopen interval $[bc] \setminus \{c\}$.

Recall that $B_\rho(T)$ stands for the open $\rho$-neighbourhood of a set $T$. By $\overline{T}$, $\partial T$ and $\conv T$ we denote the closure, the boundary and the convex hull of a set $T$, respectively. 

Further $\omega_k$ denotes the volume of the unit ball in $\mathbb{R}^k$.

\subsection{Steiner trees}

\label{subsect:introSt}

We need the following form of the Steiner (tree) problem in a Euclidean space:
\begin{problem}\label{Problem1}
For a given finite set $P = \{x_1,\dots ,x_n\} \subset \mathbb{R}^d$ to find a connected set $\St(P)$ with the minimal length (one-dimensional Hausdorff measure) containing $P$.
\end{problem}

A solution of Problem~\ref{Problem1} is called \textit{Steiner tree}.
It is known that such an $\St = \St(P)$ always exists (but is not necessarily unique) and that it is a union of a finite set of segments. Moreover, $\St$ can be represented as a graph, embedded into the Euclidean space, such that its set of vertices contains $P$ and all its edges are straight line segments. This graph is connected and does not contain cycles, i.e. is a tree, which explains the naming of $\St$. It is known that the maximal degree of the vertices of $\St$ is at most $3$. Moreover, only vertices $x_i$ can have degree $1$ or $2$, all the other vertices have degree $3$ and are called \textit{Steiner points} while the vertices $x_i$ are called \textit{terminals}. 
Vertices of the degree $3$ are called \textit{branching points}.
The angle between any two adjacent edges of $\St$ is at least $2\pi/3$.
That means that for a branching point the angle between any two segments incident to it is exactly $2\pi/3$, and these three segments belong to the same 2-dimensional plane.

The number of Steiner points in $\St$ does not exceed $n-2$. A Steiner tree with exactly $2n-2$ vertices is called \textit{full}.  
Every terminal point of a full Steiner tree has degree one.

For a given finite set $P$ consider a connected acyclic set $S$ containing $P$.
Then $S$ is called a \textit{locally minimal tree} if $\overline{S \cap B_\varepsilon (x)}$ is a Steiner tree for 
$(\{x\} \cap P) \cup (S \cap \partial B_\varepsilon (x))$ for every point $x \in S$ and small enough $\varepsilon>0$. Clearly every Steiner tree is locally minimal and not vice versa.
Locally minimal trees have all the mentioned properties of Steiner trees except the minimal length condition. So locally minimal trees inherit the definitions of terminals, Steiner points, branching points and fullness.
A proof of the listed properties of Steiner and locally minimal trees together with an additional information on them can be found in
book~\cite{hwang1992steiner} and in article~\cite{gilbert1968steiner}.

The Steiner problem may have several solutions starting with $n = 4$ (see Fig.~\ref{fig1}). It is known~\cite{basok2018uniqueness} that for $n \geq 4$ the set of $n$-point configurations for which the solution of the planar Steiner problem is not unique has the Hausdorff dimension at most $2n-1$ (as a subset of $\mathbb{R}^{2n}$).

\begin{figure}[h]
    \centering
    \begin{tikzpicture}
    \def\r{1.5cm}
    \draw[ultra thick, blue]
        (-\r, \r) coordinate(x1) node[black, above right]{$1$} --++ (-60:{\r/cos(30)}) coordinate (x5);
    \draw[ultra thick, blue]
        (\r,\r) coordinate(x2) node[black, above left]{$2$} --++ (-120:{\r/cos(30)}) coordinate (x6);
    \draw[ultra thick, blue]
        (\r, -\r) coordinate(x3) node[black, below left]{$3$} --++ (120:{\r/cos(30)});
    \draw[ultra thick, blue]
        (-\r,-\r) coordinate(x4) node[black, below right]{$4$} --++ (60:{\r/cos(30)});
    \draw[ultra thick, blue]
        (x5) -- (x6);
    \foreach \x in{1,2,...,6}{
        \fill (x\x) circle (2pt);
    }
\end{tikzpicture}
\hspace{2cm}
\begin{tikzpicture}
    \def\r{1.5cm}
    \draw[ultra thick, blue]
        (-\r, \r) coordinate(x1) node[black, above right]{$1$} --++ (-30:{\r/cos(30)}) coordinate (x5);
    \draw[ultra thick, blue]
        (\r,\r) coordinate(x2) node[black, above left]{$2$} --++ (-150:{\r/cos(30)});
    \draw[ultra thick, blue]
        (\r, -\r) coordinate(x3) node[black, below left]{$3$} --++ (150:{\r/cos(30)}) coordinate (x6);
    \draw[ultra thick, blue]
        (-\r,-\r) coordinate(x4) node[black, below right]{$4$} --++ (30:{\r/cos(30)});
    \draw[ultra thick, blue]
        (x5) -- (x6);
    \foreach \x in{1,2,...,6}{
        \fill (x\x) circle (2pt);
    }
\end{tikzpicture}
    \caption{An example of non-unique solution. Labelled points form a square.}
    \label{fig1}
\end{figure}
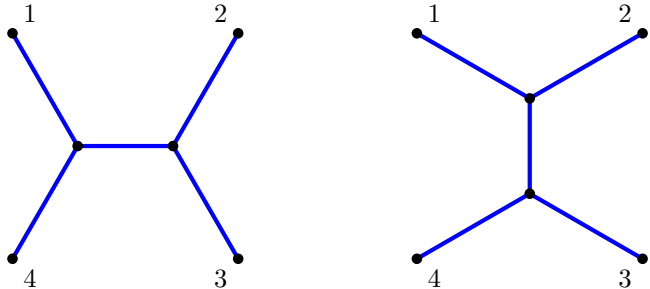

A \textit{topology} $T$ of a labelled Steiner tree (or a labelled locally minimal tree) $\St$ is the corresponding abstract graph with labelled terminals and unlabelled Steiner points. Further, let us call a topology $T$ \textit{realizable} for a set $P$ if there exists such a locally minimal tree $S(P)$ with topology $T$; we will denote this tree by $S_T(P)$.

\begin{proposition}[Melzak,~\cite{melzak1961problem}]
If a topology $T$ is realizable for $P$ then the realization $S_T(P)$ is unique.
\label{melzakuniq}
\end{proposition}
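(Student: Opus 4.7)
My plan is induction on $n = |P|$, carried out via Melzak's equilateral-triangle reduction. First I would reduce to the case of a full topology: if some terminal $x_i$ has degree $\ge 2$ in $T$, then cutting $T$ at $x_i$ splits it into two or three labelled sub-topologies $T_1,\dots,T_k$, each of which is itself a Steiner topology on a strictly smaller labelled terminal set (each regards $x_i$, placed at its prescribed position, as an additional degree-$1$ terminal). Uniqueness of $S_T(P)$ follows from uniqueness of each $S_{T_j}$. So we may assume $T$ is full: every terminal has degree $1$, every Steiner point has degree $3$, and there are exactly $n-2$ Steiner points.

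Next comes the Melzak step. Any full binary tree with $n\ge 3$ leaves has a \emph{cherry}: two terminals $a,b$ sharing a common Steiner point $s$, which has a third neighbour $d$ in $T$. In any realization the three $2\pi/3$ angles at $s$ force $s$ to lie on the same side of the line $(ab)$ as $d$. Let $c$ be the apex of the equilateral triangle built on $[ab]$ on the \emph{opposite} side. Ptolemy's identity, applied to the cyclic quadrilateral inscribed in the circumcircle of $\triangle abc$, gives $|pa|+|pb|=|pc|$ for any $p$ on the arc $ab$ not containing $c$; a direct angle chase shows that the $2\pi/3$ conditions at $s$ are equivalent to $s \in [cd]$ together with $s$ lying on that arc. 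Hence, given $c$ and $d$, the Steiner point $s$ is uniquely recovered as the second intersection of the segment $[cd]$ with the circumcircle of $\triangle abc$, and
\[
|sa|+|sb|+|sd|=|cd|.
\]
Replacing the cherry $(a,b,s)$ by the single labelled terminal $c$ produces a full topology $T'$ on the $n-1$ terminals $P':=(P\setminus\{a,b\})\cup\{c\}$, of total length equal to that of $S_T(P)$, and $T$ is realizable on $P$ if and only if $T'$ is realizable on $P'$.

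By the induction hypothesis $S_{T'}(P')$ is unique; this determines $d$ (whether it is a terminal of $T'$ or an already-reconstructed Steiner point) and hence $s$ via the displayed identity. Iterating reconstructs every Steiner point of $S_T(P)$ uniquely; the base cases $n=2$ (a segment) and $n=3$ (the Torricelli point) are immediate.

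The main obstacle, and the substantive part of Melzak's original argument, is the bookkeeping of \emph{sides}: at each reduction step the equilateral triangle must be erected on the side of $(ab)$ opposite to the rest of the realization, and this choice is not intrinsic to the abstract labelled tree. In the planar setting one can either regard $T$ as carrying a cyclic order at each Steiner point (equivalently, a planar embedding up to global reflection), or simply try both sides at every cherry and discard the branches of the recursion that fail the eventual angle, segment-inclusion, or non-self-intersection checks. The realizability assumption ensures that at most one branch survives, which is exactly the content of the proposition; the arithmetic identity $|sa|+|sb|+|sd|=|cd|$ is merely the engine that drives the inductive step.
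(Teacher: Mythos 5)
There is a genuine gap, and it sits exactly where you yourself locate the difficulty: the bookkeeping of sides in the Melzak reduction. To prove uniqueness you must assume two distinct realizations $S$ and $\widetilde S$ of the same topology $T$ on the same terminal tuple $P$ and derive a contradiction. Your inductive step erects the equilateral triangle over $[ab]$ on the side of $(ab)$ opposite to the Steiner point of the cherry \emph{in the given realization}; nothing forces $S$ and $\widetilde S$ to place that Steiner point on the same side, so the two realizations may reduce to realizations of $T'$ over two \emph{different} terminal sets $P'=(P\setminus\{a,b\})\cup\{c\}$ and $P''=(P\setminus\{a,b\})\cup\{\tilde c\}$, where $c,\tilde c$ are the two apexes. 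The induction hypothesis (uniqueness of $S_{T'}$ over each fixed terminal set separately) then produces no contradiction. Your closing remark that ``the realizability assumption ensures that at most one branch survives, which is exactly the content of the proposition'' concedes the point: the claim that at most one side choice can be completed to a realization of $T$ on $P$ \emph{is} the uniqueness statement for that cherry, and no argument for it is given, so the proof is circular at its crucial step. (A smaller inaccuracy: realizability of $T'$ on $P'$ does not imply realizability of $T$ on $P$ — the unmerge requires the recovered $s$ to land on the correct arc of the circumcircle — so your ``if and only if'' holds only in one direction; this is harmless for uniqueness but should not be asserted.)

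Two further remarks. First, the construction is intrinsically planar: for $P\subset\mathbb R^d$ with $d\ge 3$ the apex of an equilateral triangle over $[ab]$ is not unique (there is a whole $(d-2)$-sphere of candidates), and the $2$-plane containing the cherry is known only a posteriori; yet the proposition is stated for topologies realizable for $P\subset\mathbb R^d$ and is invoked in the paper for terminals in $\mathbb R^d$ (in the theorem of Subsection 2.3), so a planar argument does not cover the needed generality. Second, the paper itself does not reprove Melzak's statement (it is quoted from~\cite{melzak1961problem}), but the uniqueness mechanism it actually runs, in the proof of Lemma~\ref{keylemma}, is convexity of the total length $L(\bar v)$ as a function of the vertex positions, in the vein of~\cite{gilbert1968steiner}: if two nondegenerate realizations with the same topology and the same terminals existed, both would be minima of the convex functional $L$, so $L$ would be affine along the segment of interpolating networks, forcing every edge to keep a constant direction; with the terminals pinned, a cherry's Steiner point is then the intersection of two fixed non-collinear rays issued from its fixed neighbours, hence constant, and one propagates this through the tree to conclude $S=\widetilde S$. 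That route is dimension-free and bypasses the side dilemma entirely; I recommend either switching to it or supplying an actual argument excluding the ``two apexes'' scenario.
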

Proposition~\ref{melzakuniq} shows that $S_T(P)$ is uniquely defined.

\section{Results}

\subsection{\texorpdfstring{$C^{1,1}$}{C11} curves maximize the volume of \texorpdfstring{$R$}{R}-neighborhood}
\label{subsec:C11-curves-theorem}

The proof of the following folklore inequality can be found, for instance in~\cite{mosconi2005gamma}.

\begin{lemma}
\label{lemma:area_inequality}
Let $\gamma$ be a compact connected subset of $\mathbb{R}^d$ with $\H(\gamma) < \infty$.
Then
\[
\HH^d(\{x \in \mathbb{R}^d: \dist(x,\gamma) \leq t\} ) \leq \H (\gamma) \omega_{d-1}t^{d-1} + \omega_d t^d,
\]
where $\omega_k$ denotes the volume of the unit ball in $\mathbb{R}^k$.
\end{lemma}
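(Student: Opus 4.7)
My plan is to first establish the inequality for finite connected polygonal sets (finite unions of line segments that are connected) by induction on the number of edges, and then pass to the limit for a general compact connected $\gamma$ of finite length.

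The base case is a single point $\gamma_0 = \{p\}$, where $\HH^d(B_t(p)) = \omega_d t^d$ and $\H(\gamma_0)=0$, so the inequality holds with equality. For the inductive step, let $\gamma$ be a finite connected union of line segments $e_1,\dots,e_n$ of lengths $\ell_i$ with essentially disjoint interiors, so that $\H(\gamma)=\sum_i \ell_i$. Since $\gamma$ is connected, we can order the edges so that each partial union $\gamma_i := e_1\cup\cdots\cup e_i$ is connected and $e_i=[p_i,q_i]$ satisfies $p_i \in \gamma_{i-1}$. The key identity is
\[
\HH^d\bigl(B_t(e_i)\setminus B_t(p_i)\bigr) = \ell_i\,\omega_{d-1}t^{d-1},
\]
which follows from the explicit sausage formula $\HH^d(B_t(e_i)) = \ell_i\omega_{d-1}t^{d-1}+\omega_d t^d$ together with $B_t(p_i)\subseteq B_t(e_i)$ and $\HH^d(B_t(p_i)) = \omega_d t^d$. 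Since $B_t(p_i)\subseteq B_t(\gamma_{i-1})$, one obtains $\HH^d(B_t(\gamma_i)) \leq \HH^d(B_t(\gamma_{i-1})) + \ell_i\omega_{d-1}t^{d-1}$, and telescoping yields the desired bound for polygonal $\gamma$.

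For a general compact connected $\gamma$ with $\H(\gamma)<\infty$, the plan is to approximate $\gamma$ in Hausdorff distance by finite polygonal connected sets $\gamma_n$ with $\limsup_n \H(\gamma_n)\leq \H(\gamma)$, for instance by building inscribed connected polygonal subsets of $\gamma$ on finer and finer $\varepsilon_n$-nets, exploiting that a compact connected continuum of finite length is path-connected and rectifiable. Under Hausdorff convergence the tube volumes converge because the symmetric difference $B_t(\gamma_n)\triangle B_t(\gamma)$ is contained in a thin shell whose Lebesgue measure vanishes. Passing the polygonal inequality to the limit gives the claim. The main obstacle is precisely this length-preserving polygonal approximation: a crude $1$-Lipschitz parametrization of $\gamma$ (from Eilenberg--Harrold type results) has domain of length up to $2\H(\gamma)$ because tree-like parts must be traversed twice, so naive inscribed polygons would deliver only a factor-two worse constant. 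Avoiding this loss requires building polygonal approximants whose total edge length counted without multiplicity is controlled by $\H(\gamma)$, which in turn rests on the rectifiable structure of $\gamma$ and on the fact that an arc between two points of $\gamma$ has $\H$-measure equal to its length.
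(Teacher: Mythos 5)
The paper does not actually prove this lemma (it is quoted as folklore, with a citation to~\cite{mosconi2005gamma}), so the comparison is only with your argument on its own merits. Your polygonal step is sound: the sausage identity $\HH^d(B_t(e_i))=\ell_i\omega_{d-1}t^{d-1}+\omega_dt^d$ together with $B_t(p_i)\subseteq B_t(\gamma_{i-1})$ gives the telescoping bound (after the harmless subdivision needed so that each new edge meets the previous union), and the passage to the limit is also fine --- indeed easier than you make it, since for an upper bound on the tube of $\gamma$ you only need the one-sided inclusion $\gamma\subseteq B_{\eps_n}(\gamma_n)$, which lets you apply the polygonal inequality at radius $t+\eps_n$ and send $\eps_n\to 0$; no control of $B_t(\gamma_n)\smallsetminus B_t(\gamma)$ or genuine Hausdorff convergence is required.

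The genuine gap is the step you yourself flag and then leave unresolved: the existence of connected polygonal approximants $\gamma_n$ with $\gamma\subseteq B_{\eps_n}(\gamma_n)$ and $\limsup_n\H(\gamma_n)\leq\H(\gamma)$. Your proposed route via inscribed polygons along a Lipschitz parametrization does lose a factor $2$ (tree-like parts are traversed twice, and nothing forces the doubled chords to coincide), and the sentence about ``edge length counted without multiplicity controlled by $\H(\gamma)$'' is an assertion, not an argument; as written the proof does not close. The missing idea is cheap, and it is the same device the paper uses in the proof of Lemma~\ref{lemma:fekete}: take a finite $\eps$-net $\mathcal{E}_\eps\subset\gamma$ and let $\gamma_\eps$ be a Steiner tree for $\mathcal{E}_\eps$. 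Then $\gamma_\eps$ is a finite connected union of segments, $\H(\gamma_\eps)\leq\H(\gamma)$ for free because $\gamma$ itself is a connected competitor containing $\mathcal{E}_\eps$, and $\gamma\subseteq B_\eps(\mathcal{E}_\eps)\subseteq B_\eps(\gamma_\eps)$, which is exactly the one-sided inclusion needed above. With this substitution (and the trivial remark that the closed $t$-neighbourhood can be handled by working at radius $t+\delta$ and letting $\delta\downarrow0$), your argument becomes a complete and elementary proof.
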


One can ask when we have equality in Lemma~\ref{lemma:area_inequality}. It is easy to show that in this case $\gamma$ must be a simple curve. Further examination may show that $\gamma$ should be $C^1$; to see this heuristically, one can observe that the inequality in Lemma~\ref{lemma:area_inequality} becomes strict for any broken line with at least two segments. Furthermore, an example of an arc of a circle of a small radius suggests that $\gamma$ must have the curvature radius at least $t$. 
The first aim of the current work is to make these heuristics into a theorem characterizing all curves in $\mathbb R^d$ for which the inequality in Lemma~\ref{lemma:area_inequality} becomes an equality.

Let $\gamma\subset \mathbb R^d$ be a rectifiable curve, and by an abuse of the notation let also $\gamma:[0,\H(\gamma)]\to \mathbb R^d$ denote its arc length parametrization. We say that $\gamma$ has the curvature radius at least $R$ if $\gamma\in C^1$ and for any $s,t\in (0,\H(\gamma))$ we have $|\gamma'(s) - \gamma'(t)|\leq R^{-1}|s-t|$.

\begin{theorem}
    \label{theorem:c11}
    Let $\gamma\subset \R^d$ be a rectifiable curve and $R>0$ be given. Then the following conditions are equivalent:
    \begin{itemize}
        \item [(i)] We have $\HH^d(B_R(\gamma) ) = \H (\gamma) \omega_{d-1}R^{d-1} + \omega_d R^d$, where $\omega_k$ is the volume of the unit ball in $\mathbb R^k$.
        \item [(ii)] For any $p\in B_R(\gamma)$ there exists a unique $t\in [0,\H(\gamma)]$ such that $|p-\gamma(t)| = \dist(p,\gamma)$.
        \item [(iii)] The curve $\gamma$ has the curvature radius at least $R$ and for any $p\in B_R(\gamma)$ there exists a unique $t\in [0,\H(\gamma)]$ such that $|p-\gamma(t)| = \dist(p,\gamma)$.
    \end{itemize}
\end{theorem}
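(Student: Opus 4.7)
The implication (iii) $\Rightarrow$ (ii) is immediate since (iii) contains (ii) as a conjunct, so my plan is to complete the cycle via (iii) $\Rightarrow$ (i) $\Rightarrow$ (ii) $\Rightarrow$ (iii). For (iii) $\Rightarrow$ (i) I would use the normal tube parametrization: writing $L = \H(\gamma)$ and picking a measurable orthonormal frame $e_1(t),\dots,e_{d-1}(t)$ of $\gamma'(t)^\perp$ (possible because (iii) gives $\gamma \in C^1$), define
\[
\Phi(t,v) := \gamma(t) + \sum_{i=1}^{d-1} v_i e_i(t), \qquad (t,v) \in [0,L] \times \overline{B_R^{d-1}(0)}.
\]
In the adapted basis $\{\gamma'(t),e_1(t),\dots,e_{d-1}(t)\}$ the Jacobian collapses, almost everywhere, to $J\Phi(t,v) = 1 - \langle v, \gamma''(t)\rangle$, which is non-negative since $|v| \leq R$ and $|\gamma''(t)| \leq 1/R$. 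Condition (ii) grants injectivity of $\Phi$ on the interior, so the area formula combined with the vanishing of the odd-in-$v$ integrand yields
\[
\HH^d\bigl(\Phi([0,L] \times B_R^{d-1}(0))\bigr) = \int_0^L \int_{B_R^{d-1}(0)} \bigl(1 - \langle v,\gamma''(t)\rangle\bigr)\,dv\,dt = L\,\omega_{d-1} R^{d-1}.
\]
The complement of this image inside $B_R(\gamma)$ is, by (ii) and the first-order optimality condition, precisely two disjoint open half-balls of radius $R$ centered at $\gamma(0)$ and $\gamma(L)$, contributing $\omega_d R^d$ in total; adding gives (i).

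For (i) $\Rightarrow$ (ii) I proceed by contradiction. Suppose some $p_0 \in B_R(\gamma)$ admits two distinct nearest parameters $t_1 < t_2$. Continuity of $t \mapsto |p_0 - \gamma(t)|$ and compactness of $[0,L]$ give an open neighborhood $U \ni p_0$ and disjoint open $I_1 \ni t_1$, $I_2 \ni t_2$ in $[0,L]$ such that for every $p \in U$ the distance $|p - \gamma(\cdot)|$ attains its minimum both in $I_1$ and in $I_2$. The proof of Lemma~\ref{lemma:area_inequality} realizes the bound $L\,\omega_{d-1} R^{d-1} + \omega_d R^d$ by a local-tube cover of $B_R(\gamma)$; each point of the positive-measure set $U$ is then covered at least twice, producing the strict inequality $\HH^d(B_R(\gamma)) < L\,\omega_{d-1}R^{d-1} + \omega_d R^d$ and contradicting (i).

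For (ii) $\Rightarrow$ (iii) I would upgrade pointwise uniqueness of the nearest point to $C^{1,1}$-regularity in three steps. \emph{Simplicity:} if $\gamma(s) = \gamma(t)$ with $s \neq t$ then the common point has two minimizing parameters, violating (ii). \emph{$C^1$-regularity:} if $\gamma$ has a corner at $t_0$ with distinct one-sided unit tangents $u_- \neq u_+$ pointing into the two branches, then on the inner angular bisector of $u_-,u_+$ one finds points arbitrarily close to $\gamma(t_0)$, hence inside $B_R(\gamma)$, that are equidistant from the two branches of $\gamma$, again violating (ii). \emph{Curvature bound:} if $\gamma'$ fails to be $(1/R)$-Lipschitz there are sequences $s_n < t_n \to t^*$ with $t_n - s_n \to 0$ and $|\gamma'(s_n) - \gamma'(t_n)|/(t_n - s_n) \to K > 1/R$; a rescaling/compactness argument centered at $\gamma(t^*)$ extracts a limiting planar arc of curvature $K > 1/R$, whose center of curvature lies in $B_R(\gamma)$ with non-unique nearest point, contradicting (ii).

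The hard part will be this last step: converting a Lipschitz-norm failure of $\gamma'$ into a genuine geometric blow-up producing a non-unique projection requires a careful rescaling/compactness argument, or equivalently an appeal to Federer's theory of sets of positive reach. The remaining steps of the cycle are essentially routine once the correct tube parametrization is in place.
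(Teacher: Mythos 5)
Your cycle (iii)$\Rightarrow$(i)$\Rightarrow$(ii)$\Rightarrow$(iii) is logically fine, but two of its three links have genuine gaps. The most serious is (i)$\Rightarrow$(ii). First, the claim that every $p$ in an open neighbourhood $U$ of $p_0$ attains its minimum both in $I_1$ and in $I_2$ is false: for a generic perturbation of $p_0$ the global minimum is attained near only one of $t_1,t_2$ (the set of points with two nearest points is typically negligible); what is true is only that $U\subset B_R(\gamma(I_1))\cap B_R(\gamma(I_2))$. Second, and more importantly, the double-counting does not close quantitatively. If you split $\gamma$ at a time $\tau\in(t_1,t_2)$ into $\gamma_1,\gamma_2$ and apply Lemma~\ref{lemma:area_inequality} to each piece, the resulting upper bound already exceeds the target $\H(\gamma)\omega_{d-1}R^{d-1}+\omega_dR^d$ by one full $\omega_dR^d$, and the overlap $B_R(\gamma_1)\cap B_R(\gamma_2)$ always contains the whole ball $B_R(\gamma(\tau))$; so to get a strict inequality you must exhibit overlap of positive measure \emph{outside} $B_R(\gamma(\tau))$, and a neighbourhood of $p_0$ may well sit entirely inside that ball. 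This is exactly the difficulty the paper's argument is engineered to overcome: it chooses the cut time $t$ via the hyperplane through $p$ perpendicular to $[\gamma(t_1)\gamma(t_2)]$ and produces an auxiliary point $q$ with $|q-\gamma(t_1)|=|q-\gamma(t_2)|\leq R-\eps$ but $|q-\gamma(t)|\geq R+\eps$, so that $B_R(\gamma_1)\cap B_R(\gamma_2)\supset B_R(\gamma(t))\sqcup B_\eps(q)$. Relying instead on the (unexamined) internal structure of the folklore Lemma~\ref{lemma:area_inequality} (``a local-tube cover, each point counted twice'') is not legitimate: a naive cover of $\gamma$ by sub-arcs and their $R$-tubes costs an extra $\omega_dR^d$ per arc and does not even reproduce the sharp lemma, let alone its strict version.

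The second gap is the curvature bound in (ii)$\Rightarrow$(iii), which you yourself flag as ``the hard part'' and leave as a sketch. As described it does not work: blow-up limits of a $C^1$ curve at a point are straight lines, so a failure of the $(1/R)$-Lipschitz bound on $\gamma'$ along $s_n,t_n\to t^*$ need not produce any ``limiting planar arc of curvature $K$'', and no mechanism is given that converts the Lipschitz failure into a point of $B_R(\gamma)$ with two nearest points; an appeal to Federer would require a precise statement (positive reach plus being a rectifiable curve implies $C^{1,1}$ with curvature radius at least the reach) that you do not supply. This is precisely where the paper invests its effort: it proves continuity of the nearest-point map $F$, shows each fibre $Z_t=F^{-1}(\gamma(t))$ is a hyperplane disc (Lemma~\ref{lemma:curve_in_cone}, which uses condition (i) --- the paper deliberately proves $(i)\&(ii)\Rightarrow(iii)$, whereas you attempt (ii)$\Rightarrow$(iii) alone), lifts the normal direction of these discs to a continuous unit vector field $w$, and shows $\gamma$ is an integral curve of $w$, from which $C^1$ and the $R^{-1}$-Lipschitz bound follow from disjointness of the discs. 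Your corner/bisector argument for $C^1$ is morally right but also needs care (near a corner the equidistant point could a priori project to the corner itself, so one needs the interior-foot-point and intermediate-value details). Finally, your (iii)$\Rightarrow$(i) via the normal tube parametrization is a genuinely different and viable route from the paper's (ii)$\Rightarrow$(i) (which splits $\gamma$ into $n$ equal arcs and uses Lemmas~\ref{lemma:crude_area_bound} and~\ref{lemma:good1}, with no curvature or area formula), but as written it too asserts rather than proves the key facts that (ii) makes the normal map injective and that the complement of the tube in $B_R(\gamma)$ is exactly two disjoint half-balls.
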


We prove Theorem~\ref{theorem:c11} in Section~\ref{sec:appendix}.

The next statement has been proven by Tilli in~\cite{tilli2010some} in the planar case; using Theorem~\ref{theorem:c11} and generalizing some of Tilli's arguments it turns easy to obtain the following:
\begin{corollary}
    \label{cor:length+curv}
    Assume that $\gamma\subset \mathbb R^d$ is a rectifiable curve of length at most $\pi R$ and curvature radius at least $R$ for some $R>0$. Then $\gamma$ satisfies the condition from the item~(ii) of Theorem~\ref{theorem:c11} with the given $R$, in particular, $\HH^d(B_R(\gamma) ) = \H (\gamma) \omega_{d-1}R^{d-1} + \omega_d R^d$.
\end{corollary}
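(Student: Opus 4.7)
By Theorem~\ref{theorem:c11}, it suffices to verify condition~(ii): for every $p \in B_R(\gamma)$ the nearest-point projection to $\gamma$ is single-valued. I would argue by contradiction, assuming some $p \in B_R(\gamma)$ admits two distinct nearest points $\gamma(s), \gamma(t)$ with $s < t$ at common distance $\rho := \dist(p,\gamma) < R$. For interior $s,t$ the first-order condition gives $\gamma'(s) \perp v_1 := p - \gamma(s)$ and $\gamma'(t) \perp v_2 := p - \gamma(t)$, with $|v_1|=|v_2|=\rho$ and $\gamma(t)-\gamma(s) = v_1 - v_2$; the cases where $s$ or $t$ is an endpoint reduce to this via one-sided versions of the perpendicularity.

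The two principal ingredients are (a) the chord--arc inequality $|\gamma(t)-\gamma(s)| \geq 2R\sin\bigl((t-s)/(2R)\bigr)$, valid for $t-s \leq \pi R$, which follows from $|\gamma''|\leq 1/R$ together with the identity $|\gamma(t)-\gamma(s)|^2 = \int_s^t\!\int_s^t \langle\gamma'(u),\gamma'(v)\rangle\,du\,dv$ and the pointwise bound $\langle\gamma'(u),\gamma'(v)\rangle \geq \cos(|u-v|/R)$; and (b) the tangent estimate $|\langle\gamma'(u), v_1\rangle| \leq \rho\sin\bigl((u-s)/R\bigr)$ for $(u-s)\leq \pi R/2$ (with the symmetric statement involving $v_2$ and $t-u$), obtained by decomposing $\gamma'(u) = \cos\alpha(u)\gamma'(s) + \sin\alpha(u)\,n(u)$ with $n(u)$ a unit vector perpendicular to $\gamma'(s)$ and using $\alpha(u) \leq (u-s)/R$ on $S^{d-1}$. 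Inserting these into the identity $\int_s^t \langle\gamma'(u), v_1-v_2\rangle\,du = |\gamma(t)-\gamma(s)|^2$ gives, when $t-s \leq \pi R/2$, the inequality $|\gamma(t)-\gamma(s)|^2 \leq 4\rho R \sin^2\bigl((t-s)/(2R)\bigr)$; combined with (a), this forces $R \leq \rho$, contradicting $\rho < R$.

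The main obstacle is the residual range $t - s \in (\pi R/2, \pi R]$, where the bound (b) saturates and direct integration becomes inadequate. Here I would invoke the hemisphere principle: viewed as a curve on $S^{d-1}$, $\gamma'|_{[s,t]}$ has length at most $(t-s)/R \leq \pi$, so its image lies in a closed hemisphere $\{x : \langle x,\nu\rangle \geq 0\}$ for some unit $\nu \in S^{d-1}$. Integration then yields $\langle v_1-v_2, \nu\rangle = \int_s^t \langle\gamma'(u),\nu\rangle\,du \geq 0$. Adapting the case analysis of Tilli in the planar setting, the perpendicularity relations $v_i \perp \gamma'(s_i)$ combined with $\langle\gamma'(s_i),\nu\rangle \geq 0$ restrict $\nu$ enough to produce a choice making $\langle v_1-v_2, \nu\rangle \leq 0$; strict equality in both directions forces $\gamma' \perp \nu$ almost everywhere, trapping $\gamma$ in a proper affine hyperplane and allowing an induction on the ambient dimension that bottoms out in Tilli's planar result. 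The delicate step, and what I expect to be the hardest part, is the explicit construction of a suitable $\nu$ in general dimension; in the plane it reduces to an explicit angle computation, but in higher dimensions it seems to require a careful convex-geometric analysis of the dual cone of $\{\gamma'(u) : u \in [s,t]\}$ intersected with the perpendicularity subspaces $v_1^\perp$ and $v_2^\perp$.
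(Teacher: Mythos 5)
Your argument for the range $t-s\leq \pi R/2$ is essentially sound: the chord--arc bound $|\gamma(t)-\gamma(s)|\geq 2R\sin\bigl((t-s)/(2R)\bigr)$, the tangent estimates, and the identity $\int_s^t\langle\gamma'(u),v_1-v_2\rangle\,du=|\gamma(t)-\gamma(s)|^2$ do combine to give $R\leq\rho$ there (and the endpoint cases indeed survive with the one-sided inequalities, since you only need an upper bound on $\langle\gamma'(u),v_1\rangle$ and on $-\langle\gamma'(u),v_2\rangle$, and the extra terms $\cos\alpha\,\langle\gamma'(s),v_1\rangle$, $-\cos\beta\,\langle\gamma'(t),v_2\rangle$ have the right sign as long as $\cos\alpha,\cos\beta\geq 0$). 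The genuine gap is the residual range $t-s\in(\pi R/2,\pi R]$, which you acknowledge but do not close, and your sketched fix does not obviously work. The hemisphere principle applied to $\gamma'|_{[s,t]}$ (length at most $\pi$ on $S^{d-1}$) only gives $\langle v_1-v_2,\nu\rangle=\int_s^t\langle\gamma'(u),\nu\rangle\,du\geq 0$ for every $\nu$ in the dual cone of $\{\gamma'(u)\}$; to run your plan you must exhibit a $\nu$ in that cone with $\langle v_1-v_2,\nu\rangle\leq 0$, which by the same identity forces $\langle\gamma'(u),\nu\rangle=0$ a.e., i.e.\ the configuration must already be confined to a hyperplane. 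Nothing in the data (the two perpendicularity relations and the curvature bound) guarantees such a $\nu$ exists for a genuinely non-planar configuration, so the proposed dichotomy "either find the good $\nu$ or reduce the dimension" is not established, and the induction bottoming out in Tilli's planar theorem is never set up. As written, the hardest case of the corollary remains unproved.

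For comparison, the paper avoids splitting by the size of $t-s$ altogether. It first proves Lemma~\ref{lemma:tillis_lemmas} (a ball of radius $R$ tangent to $\gamma$ at its initial point is avoided by the whole curve), using integral estimates very close to your bounds (a) and (b). Then, given a point $p$ with two nearest points $\gamma(0)$ and $\gamma(l)$, the first-order conditions give two cases: if both are genuine tangencies, the lemma applied to the sphere of radius $R$ through $\gamma(0)$ on the side of $p$ already yields a contradiction; if $(p-\gamma(0))\cdot\gamma'(0)<0$, one sweeps the tangent ball around the axis $\gamma'(0)$ to get the region $S$ avoided by $\gamma$, applies the $1$-Lipschitz radial projection $\Phi(x)=\bigl(\sqrt{x_1^2+\cdots+x_{d-1}^2},x_d\bigr)$ to obtain a planar curve avoiding a disc of radius $R$, reflects it, and invokes the isoperimetric inequality against the hypothesis $l\leq\pi R$. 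You would need either to adopt such a global geometric argument for the large-separation case or to supply the missing construction of $\nu$ together with a complete treatment of the planar base case; until then the proof is incomplete.
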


Recall that a curve $\gamma = \gamma(t)$ parameterized by its arc length is called $C^{1,1}$, if $\gamma'(t)$ is $C$-Lipshitz for some constant $C$.

\begin{corollary}
    \label{cor:C11_r_small_enough}
    For each simple $C^{1,1}$ curve $\gamma$ in $\mathbb R^d$ there exists $\eps>0$ such that $\gamma$ satisfies the condition from the item~(ii) of Theorem~\ref{theorem:c11} with $R=\eps$, in particular, $\HH^d(B_\eps(\gamma) ) = \H (\gamma) \omega_{d-1}\eps^{d-1} + \omega_d \eps^d$.
\end{corollary}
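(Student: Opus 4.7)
The plan is to deduce Corollary~\ref{cor:C11_r_small_enough} from Corollary~\ref{cor:length+curv} by a localization argument combining two ingredients: the curvature radius bound coming from the $C^{1,1}$ hypothesis, and a quantitative form of injectivity coming from simplicity and compactness.

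First, since $\gamma$ is $C^{1,1}$, there is a constant $C > 0$ with $|\gamma'(s) - \gamma'(t)| \leq C|s-t|$ for all $s,t \in [0,L]$, where $L := \H(\gamma)$. Setting $R_0 := 1/C$, both $\gamma$ and each of its sub-arcs have curvature radius at least $R_0$. Second, since $\gamma : [0,L]\to\R^d$ is continuous and injective on a compact domain, for every $\delta>0$ the quantity
\[
\eta(\delta) := \inf\bigl\{ |\gamma(s)-\gamma(t)| : s,t\in[0,L],\ |s-t|\geq\delta \bigr\}
\]
is strictly positive, being the infimum of a continuous positive function on a compact set.

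Now set $\eps := \tfrac12 \min\bigl(R_0,\,\eta(\pi R_0)\bigr)$. I would then show that every $p\in B_\eps(\gamma)$ has a unique nearest point on $\gamma$. Suppose, for contradiction, that $t_1\neq t_2$ both realize $\dist(p,\gamma)$. If $|t_1 - t_2|\leq \pi R_0$, let $\gamma_1$ be the sub-arc of $\gamma$ with parameter interval $[\min(t_1,t_2),\max(t_1,t_2)]$: its length is at most $\pi R_0$ and its curvature radius is at least $R_0$. Corollary~\ref{cor:length+curv}, applied to $\gamma_1$ with $R = R_0$, gives uniqueness of the nearest point of $p$ on $\gamma_1$ (and $p\in B_{R_0}(\gamma_1)$ since $|p-\gamma(t_1)|<\eps\leq R_0$), contradicting that both $\gamma(t_1),\gamma(t_2)$ achieve $\dist(p,\gamma_1)=\dist(p,\gamma)$. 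If instead $|t_1-t_2|>\pi R_0$, then $|\gamma(t_1)-\gamma(t_2)|\geq \eta(\pi R_0)$, so the triangle inequality yields $\eta(\pi R_0)\leq 2\dist(p,\gamma)<2\eps\leq \eta(\pi R_0)$, another contradiction. This establishes item~(ii) of Theorem~\ref{theorem:c11} at $R=\eps$, and the equality of volumes then follows from the equivalence there.

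The argument is essentially routine once Corollary~\ref{cor:length+curv} is available; the only place requiring any care is the choice of thresholds ensuring that the two cases $|t_1-t_2|\leq \pi R_0$ and $|t_1-t_2|>\pi R_0$ both yield contradictions. The genuine analytic content — uniqueness of projection on short, uniformly-curved pieces — is entirely encapsulated in Corollary~\ref{cor:length+curv}, so no additional geometric work beyond the localization is needed. A minor technical variant is required if $\gamma$ is a simple closed curve (parametrize by $\R/L\Z$ and measure $|s-t|$ as arc-length distance), but it does not affect the structure of the proof.
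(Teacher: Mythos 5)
Your proof is correct and follows essentially the same route as the paper: both choose $\eps$ as (roughly) the minimum of the curvature radius and half the minimal distance between points of $\gamma$ whose arc-length separation is at least $\pi R_0$, and then invoke Corollary~\ref{cor:length+curv} on short sub-arcs. You merely spell out the two-case argument (arc-length separation $\leq \pi R_0$ versus $> \pi R_0$) that the paper leaves implicit, which is a welcome clarification rather than a departure.
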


We prove Corollaries~\ref{cor:length+curv} and~\ref{cor:C11_r_small_enough} in Section~\ref{sec:appendix}.
Note that Corollary~\ref{cor:length+curv} permits us to easily generalize Tilli's result~\cite[Theorem~1.1]{tilli2010some}. 
\begin{theorem}
    \label{theorem:tilli_in_Rd}
    Let $\gamma\subset \mathbb R^d$ be a rectifiable curve of length $l$ and satisfying the condition from item~(ii) of Theorem~\ref{theorem:c11} with some $R$; for example, $\gamma$ is as in Corollary~\ref{cor:length+curv} or~\ref{cor:C11_r_small_enough}. Then the curve $\gamma$ is a solution of Problem~\ref{TheAverage} for the set $\Omega = B_R(\gamma)$ and any $\varphi$.
\end{theorem}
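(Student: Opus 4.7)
The plan is to reduce the claim to a pointwise comparison of the super-level-set volumes of $\dist(\cdot,\gamma)$ and $\dist(\cdot,\gamma')$ in $\Omega$, and then transfer the comparison to the integrals via the layer-cake formula. The key observation is that condition~(ii) of Theorem~\ref{theorem:c11} is hereditary: if every point of $B_R(\gamma)$ has a unique nearest point on $\gamma$, then the same holds in $B_t(\gamma)$ for every $t\in(0,R]$. Applying Theorem~\ref{theorem:c11} at each such $t$ gives
\[
\HH^d(B_t(\gamma)) \;=\; l\,\omega_{d-1}t^{d-1}+\omega_d t^d \qquad \text{for every } t\in(0,R].
\]

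Next I would carry out the volume comparison inside $\Omega = B_R(\gamma)$. For $t \in (0,R]$, the inclusion $B_t(\gamma)\subset B_R(\gamma)=\Omega$ yields $\HH^d(\Omega\cap B_t(\gamma)) = \HH^d(B_t(\gamma))$, while Lemma~\ref{lemma:area_inequality} applied to a (connected, compact, admissible) competitor $\gamma'$ with $\HH^1(\gamma')\leq l$ gives
\[
\HH^d(\Omega\cap B_t(\gamma')) \;\leq\; \HH^d(B_t(\gamma')) \;\leq\; \HH^1(\gamma')\omega_{d-1}t^{d-1}+\omega_d t^d \;\leq\; l\,\omega_{d-1}t^{d-1}+\omega_d t^d.
\]
Comparing the two displays yields $\HH^d(\Omega\cap B_t(\gamma')) \leq \HH^d(\Omega\cap B_t(\gamma))$ for $t\in(0,R]$; for $t>R$ this is automatic since $\Omega\setminus B_t(\gamma) = \emptyset$. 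Taking complements in $\Omega$, we obtain, for every $t\geq 0$,
\[
\HH^d(\{x\in\Omega:\dist(x,\gamma)\geq t\}) \;\leq\; \HH^d(\{x\in\Omega:\dist(x,\gamma')\geq t\}).
\]

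Finally I would integrate this pointwise inequality against the Lebesgue--Stieltjes measure $d\varphi$ associated with the right-continuous version of $\varphi$ (changing $\varphi$ on a countable set does not affect the integrals). The layer-cake identity
\[
\int_\Omega \varphi(\dist(x,\sigma))\,dx \;=\; \varphi(0)\,\HH^d(\Omega) + \int_0^\infty \HH^d(\{x\in\Omega:\dist(x,\sigma)\geq t\})\,d\varphi(t),
\]
valid for any non-decreasing $\varphi:[0,+\infty)\to[0,+\infty)$, then delivers $\int_\Omega\varphi(\dist(x,\gamma))\,dx\leq\int_\Omega\varphi(\dist(x,\gamma'))\,dx$, showing that $\gamma$ is a minimizer.

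The only subtle point is a measure-theoretic one in the last step: one should pass carefully between open and closed super-level sets for those values of $t$ at which $\varphi$ has a jump, which is handled by the monotone-class identity $\mu(\{f\geq t\}) = \lim_{\eps\to 0^+}\mu(\{f>t-\eps\})$. All of the genuinely $d$-dimensional geometry is already packaged inside Theorem~\ref{theorem:c11} (tube-volume equality) and Lemma~\ref{lemma:area_inequality} (tube-volume upper bound); the remaining argument is Tilli's planar argument verbatim.
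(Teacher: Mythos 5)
Your proposal is correct and follows essentially the same route as the paper: tube-volume equality for $\gamma$ at every radius $t\le R$ (via the heredity of condition~(ii) and Theorem~\ref{theorem:c11}), the upper bound of Lemma~\ref{lemma:area_inequality} for competitors, and a layer-cake/Fubini argument with a Stieltjes measure representing the monotone $\varphi$. The only differences are cosmetic (right-continuous modification and super-level sets instead of the paper's left-continuous $\mu([0,t))$ and sub-level sets $F_\beta$), and you usefully make explicit the heredity step that the paper leaves implicit.
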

\begin{proof}
    Having Theorem~\ref{theorem:c11} in our pocket, we can simply repeat the arguments from~\cite{tilli2010some}. Without loss of generality we can assume that $\varphi(0) = 0$ and $\varphi$ is continuous from the left. For the last property note that for any non-zero $t$ the level set $\{x\in \Omega\ \mid\ \dist(x,\beta) = t\}$ has zero Lebesgue measure, and $\varphi$ has only countably many discontinuity points; thus, making $\varphi$ continuous from the left amounts in changing the function $\varphi(\dist(x,\beta))$ on a set of measure zero and does not affect the integral in the Problem~\ref{TheAverage}. Under the assumptions above we can write
    \[
        \varphi(t) = \mu([0,t))
    \]
    where $\mu$ is a Borel measure on $\mathbb R$. Given a compact set $\beta$ put
    \[  
        F_\beta(t) = \mathcal{H}^d(\{x\in \Omega\ \mid\ \dist(x,\beta)\leq t\}).
    \]
    Assume that the length of $\beta$ is at most $l$. We have by Fubini's theorem and Lemma~\ref{lemma:area_inequality}
    \begin{multline*}
        \int_\Omega \varphi(\dist(x,\beta))\,dx = \int_\Omega \left(\int_0^{+\infty} \chi_{[0,\dist(x,\beta))}(t)\,d\mu(t)\right)\,dx = \int_0^{+\infty} (\mathcal{H}^d(\Omega) - F_\beta(t))\,d\mu(t) \geq \\
        \geq \int_0^{+\infty} (\mathcal{H}^d(\Omega) - \min(\mathcal{H}^d(\Omega), l \omega_{d-1}t^{d-1} + \omega_d t^d)\,d\mu(t).
    \end{multline*}
    By Theorem~\ref{theorem:c11} the last inequality becomes an equality when $\beta = \gamma$, which concludes the proof.
\end{proof}

\subsection{Bounds on the length of an \texorpdfstring{$r$}{r}-minimizer}

Lemma~\ref{lemma:area_inequality} and Theorem~\ref{theorem:c11} can be also used to study the properties of maximal distance minimizers, as we demonstrate in the next subsections.

Let us provide a general lower bounds on the length of an $r$-minimizer for a given $M$. Clearly, if a set $\Sigma$ attains the equality in a lower bound for some $M$ and $r$, then it is an $r$-minimizer for $M$.

\begin{corollary}
\label{TilliType}
A maximal distance $r$-minimizer for a set $M$ has the length at least
\[
\max \left (0, \frac{\HH^d(M)  - \omega_d r^d}{\omega_{d-1}r^{d-1}} \right).
\]
\end{corollary}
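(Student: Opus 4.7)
The plan is to chain together the inclusion forced by the minimizer condition with Lemma~\ref{lemma:area_inequality} and then solve for $\H(\Sigma)$.

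First, I would observe that if $\Sigma$ is an $r$-minimizer for $M$, then by the very definition of Problem~\ref{TheProblem} we have $\F_M(\Sigma) \leq r$, which as noted in the excerpt is equivalent to the inclusion $M \subset \overline{B_r(\Sigma)}$. Taking $d$-dimensional Hausdorff (Lebesgue) measure on both sides yields
\[
\HH^d(M) \leq \HH^d(\overline{B_r(\Sigma)}).
\]
Since $\partial B_r(\Sigma)$ has Lebesgue measure zero, $\HH^d(\overline{B_r(\Sigma)}) = \HH^d(\{x : \dist(x,\Sigma) \leq r\})$.

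Second, I would apply Lemma~\ref{lemma:area_inequality} to $\Sigma$ with $t = r$. The lemma requires $\Sigma$ to be a compact connected subset of $\R^d$ with $\H(\Sigma) < \infty$; the former is built into the definition of $r$-minimizer, and the latter we may assume, since otherwise the claimed lower bound is vacuous. This gives
\[
\HH^d(\overline{B_r(\Sigma)}) \leq \H(\Sigma)\,\omega_{d-1}r^{d-1} + \omega_d r^d.
\]
Combining the two displayed inequalities and solving for $\H(\Sigma)$ produces
\[
\H(\Sigma) \geq \frac{\HH^d(M) - \omega_d r^d}{\omega_{d-1}r^{d-1}},
\]
and since length is non-negative we may take the maximum with $0$, yielding the stated bound.

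There is essentially no obstacle here: the whole content of the corollary is a one-line consequence of Lemma~\ref{lemma:area_inequality} once one notes that $M \subset \overline{B_r(\Sigma)}$. The only minor subtlety worth mentioning is the passage from $\overline{B_r(\Sigma)}$ to the sub-level set $\{\dist(\cdot,\Sigma) \leq r\}$ on which Lemma~\ref{lemma:area_inequality} is phrased, but these sets differ only by a null set so the inequality transfers verbatim.
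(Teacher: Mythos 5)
Your proof is correct and is exactly the argument the paper has in mind: the corollary is stated as an immediate consequence of Lemma~\ref{lemma:area_inequality} (the paper omits the proof), obtained from $M \subset \overline{B_r(\Sigma)} \subset \{x : \dist(x,\Sigma)\leq r\}$ and the volume bound with $t=r$. Your null-set remark is not even needed, since the inclusion of the closure in the closed sub-level set holds outright by continuity of the distance function.
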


Recall that a curve $\gamma$ is $C^{1,1}$-curve if $\gamma$ has the curvature radius at least $t$ for some $t>0$. Corollaries~\ref{TilliType} and~\ref{cor:C11_r_small_enough} imply the following:

\begin{corollary}
\label{cor:Tillis}
Let $\gamma$ be a simple $C^{1,1}$-curve. Then $\gamma$ is a solution of Problem 1 for a small enough $r$.
\end{corollary}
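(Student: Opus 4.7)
The plan is to combine the two ingredients already assembled: the volume identity from Corollary~\ref{cor:C11_r_small_enough} and the length lower bound from Corollary~\ref{TilliType}. The natural choice of $M$ is the closed tubular neighborhood of $\gamma$, as suggested by Observation~\ref{Obs:MisBrSigma}(i).

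More precisely, I would first invoke Corollary~\ref{cor:C11_r_small_enough} to obtain an $\eps>0$ for which
\[
\HH^d(B_\eps(\gamma)) = \H(\gamma)\,\omega_{d-1}\eps^{d-1} + \omega_d \eps^d.
\]
Set $M := \overline{B_\eps(\gamma)}$ and $r := \eps$. Since $\partial B_\eps(\gamma)$ has zero Lebesgue measure, $\HH^d(M)$ equals the right-hand side above. Clearly $\gamma$ is a compact connected set with $M\subset\overline{B_\eps(\gamma)}$, so it is a legitimate competitor in Problem~\ref{TheProblem} for this $M$ and $r$.

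Then I would apply Corollary~\ref{TilliType} to $M$ and $r = \eps$: any $\eps$-minimizer must have length at least
\[
\frac{\HH^d(M) - \omega_d \eps^d}{\omega_{d-1}\eps^{d-1}} \;=\; \frac{\H(\gamma)\,\omega_{d-1}\eps^{d-1}}{\omega_{d-1}\eps^{d-1}} \;=\; \H(\gamma).
\]
Since the candidate $\gamma$ attains this length, it is an $\eps$-minimizer for $M$, which is exactly the assertion of the corollary.

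There is essentially no obstacle here; the whole statement is a one-line consequence of the two corollaries once one notices that choosing $M$ to be the $\eps$-tube around $\gamma$ makes the lower bound of Corollary~\ref{TilliType} tight precisely thanks to the equality case in Corollary~\ref{cor:C11_r_small_enough}. The only tiny point worth double-checking is that replacing $B_\eps(\gamma)$ by its closure does not change the $d$-dimensional volume, which is immediate since the topological boundary of $B_\eps(\gamma)$ is contained in the level set $\{\dist(\cdot,\gamma)=\eps\}$ and hence has $\HH^d$-measure zero.
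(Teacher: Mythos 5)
Your argument is correct and is exactly the paper's intended derivation: the corollary is stated there as an immediate consequence of Corollaries~\ref{TilliType} and~\ref{cor:C11_r_small_enough}, with the same choice $M=\overline{B_\eps(\gamma)}$, $r=\eps$, and the remark preceding Corollary~\ref{TilliType} that attaining the lower bound makes a competitor a minimizer. Your side remark that the level set $\{\dist(\cdot,\gamma)=\eps\}$ is Lebesgue-null is also fine (the paper uses the same fact in the proof of Theorem~\ref{theorem:tilli_in_Rd}).
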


Let us recall the Go{\l}{\k{a}}b's theorem (see~\cite[Theorem~10.19]{MorelSolimini}):
\begin{theorem}[Go{\l}{\k{a}}b]
Let $\Sigma_1,\Sigma_2,\dots$ be a sequence of connected compacts in $\mathbb R^d$ converging to a compact $\Sigma$ with respect to the Hausdorff distance. Then
\[
    \H(\Sigma) \leq \liminf_{k\to \infty} \H(\Sigma_k).
\]
    \label{theorem:Golabs}
\end{theorem}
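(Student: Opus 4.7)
The plan is to establish this lower semicontinuity via Lipschitz reparametrizations combined with Arzel\`a--Ascoli compactness, following the classical scheme for Go{\l}{\k{a}}b's theorem. First, without loss of generality $L := \liminf_k \H(\Sigma_k) < \infty$ (otherwise the inequality is trivial) and, after extracting a subsequence, $\H(\Sigma_k) \to L$; Hausdorff convergence ensures that all $\Sigma_k$ together with $\Sigma$ lie in a common closed ball.

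Next, I would invoke the standard structure theorem for connected compacta of finite length: each $\Sigma_k$ admits a surjective, $1$-Lipschitz parametrization $\gamma_k : [0, 2\H(\Sigma_k)] \to \Sigma_k$ obtained by a depth-first traversal of an approximating tree, so that $\H$-a.e.\ point of $\Sigma_k$ is covered by exactly two parameters. After rescaling to a common domain $[0,1]$ and applying Arzel\`a--Ascoli, one extracts a uniformly convergent subsequence $\gamma_k \to \gamma$, where $\gamma : [0,1] \to \R^d$ is Lipschitz with constant $2L$. Hausdorff convergence then forces $\gamma([0,1]) = \Sigma$: the inclusion $\subset$ follows from uniform convergence, while $\supset$ follows by extracting a convergent subsequence $t_k \to t$ of preimages of any point $x \in \Sigma$ approximated by $\gamma_k(t_k) \in \Sigma_k$.

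The main obstacle, and what makes the theorem non-trivial, is that the direct Lipschitz bound gives only $\H(\Sigma) \leq 2L$, off from the claim by a factor of two. The remedy uses the area formula
\[
    \int_0^1 |\gamma_k'(t)|\, dt \;=\; \int_{\Sigma_k} \#\gamma_k^{-1}(x)\, d\H(x) \;=\; 2\H(\Sigma_k),
\]
where the second equality records that the depth-first traversal covers generic points exactly twice. Passing to the limit, the lower semicontinuity of the length functional under uniform convergence yields $\int_0^1 |\gamma'|\, dt \leq 2L$, so it is enough to prove that the limiting multiplicity satisfies $\#\gamma^{-1}(x) \geq 2$ for $\H$-a.e.\ $x \in \Sigma$; combined with the area formula applied to $\gamma$ itself this upgrades to $2\H(\Sigma) \leq \int_0^1 |\gamma'|\,dt \leq 2L$, i.e.\ $\H(\Sigma)\leq L$. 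Establishing the lower bound on the limiting multiplicity is the subtle step, as it amounts to ruling out that pairs of preimages in the $\Sigma_k$ collapse to a single limit point on a set of positive $\H$-measure in $\Sigma$. This is precisely where the rectifiability machinery assembled in Morel--Solimini enters essentially, and it is the reason the authors import the theorem as a black box rather than reproving it.
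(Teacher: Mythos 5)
The paper does not prove this statement at all: Gol\k{a}b's theorem is imported as a black box, with a citation to Morel--Solimini (Theorem~10.19 there), so there is no internal proof to compare yours with. Judged as a standalone argument, your proposal has a genuine gap, which you yourself flag: the steps you actually carry out (the rescaled Lipschitz tours $\gamma_k$, Arzel\`a--Ascoli, the identification $\gamma([0,1])=\Sigma$, and lower semicontinuity of length under uniform convergence) only give $\H(\Sigma)\le 2\liminf_k \H(\Sigma_k)$. The decisive step --- that the limit parametrization $\gamma$ has multiplicity at least $2$ at $\H$-a.e.\ point of $\Sigma$ --- is exactly what removes the factor $2$, and you do not prove it; you defer it to ``rectifiability machinery'' in the reference. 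That step is not routine: the two visit times of a point $x_k\in\Sigma_k$ under a depth-first tour can collapse in the limit whenever the part of the tree beyond $x_k$ has vanishing length (combs with many short teeth, or collapsing rectangles, show this happens on sets carrying a definite fraction of the length), so one must argue that for a.e.\ limit point the missing preimages are supplied by other branches of $\Sigma_k$ approaching the same point --- a genuinely delicate measure-theoretic argument, not a soft consequence of uniform convergence.

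It is also worth noting that the standard complete proofs are organized differently precisely to avoid this multiplicity-in-the-limit claim: Falconer-type proofs decompose $\Sigma$ into finitely many disjoint arcs and use connectedness of the approximating $\Sigma_k$ between neighbourhoods of the arc endpoints, while the Ambrosio--Tilli proof passes to a weak-$*$ limit of the measures $\HH^1\llcorner\Sigma_k$ and uses the density lower bound $\HH^1(\Sigma_k\cap B_r(x))\ge r$ for connected sets. Finally, a smaller inaccuracy: the identity $\int_0^1|\gamma_k'|\,dt=2\H(\Sigma_k)$ with multiplicity exactly two holds for topological trees (or for tours of $\eps$-dense subtrees), not for general connected compacta, which may contain cycles; in general only $\int_0^1|\gamma_k'|\,dt\le 2\H(\Sigma_k)$ is available --- fortunately that is the direction your argument uses at that point, but the claimed equality should not be stated.
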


The following fact is proven in~\cite{fekete1998traveling} for the case when the set $\Sigma$ is a finite union of broken lines (see also~\cite[Corollary~2.3]{fekete1998traveling}):
\begin{lemma}
    \label{lemma:fekete}
    Assume that $\Sigma\subset \mathbb R^2$ is a compact connected subset of finite length. Then for any $r>0$
    \[
    \H(\partial B_r(\Sigma))\leq 2\H(\Sigma)+2\pi r.
    \]
\end{lemma}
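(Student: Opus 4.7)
The plan is to approximate $\Sigma$ by a sequence of broken-line trees $\Sigma_n$ with $\Sigma_n \to \Sigma$ in Hausdorff distance and $\H(\Sigma_n) \to \H(\Sigma)$, apply Fekete's broken-line bound to each, and pass to the limit via Go{\l}{\k{a}}b's theorem.

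For the approximation, I would take an $\varepsilon_n$-net $P_n \subset \Sigma$ with $\varepsilon_n\to 0$ and let $\Sigma_n$ be a Steiner tree spanning $P_n$; thus $\Sigma_n$ is a finite broken-line tree with $\H(\Sigma_n)\le \H(\Sigma)$ (as $\Sigma$ is itself a connected set containing $P_n$). Any Hausdorff subsequential limit $\Sigma^\ast$ of $\{\Sigma_n\}$ must contain $\Sigma$ (since $P_n$ becomes dense in $\Sigma$), while $\H(\Sigma^\ast)\le \liminf \H(\Sigma_n)\le \H(\Sigma)$ by Go{\l}{\k{a}}b (Theorem~\ref{theorem:Golabs}); hence $\Sigma^\ast=\Sigma$, so $\Sigma_n\to \Sigma$ in the Hausdorff distance and $\H(\Sigma_n)\to \H(\Sigma)$. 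The broken-line case from~\cite{fekete1998traveling} then gives
\[
\H(\partial B_r(\Sigma_n)) \le 2\H(\Sigma_n) + 2\pi r.
\]

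Since $\Sigma_n$ is a tree in the plane, $B_r(\Sigma_n)$ is simply connected and hence $\partial B_r(\Sigma_n)$ is a single connected compact curve. By Blaschke selection I pass to a subsequence along which $\partial B_r(\Sigma_n)$ converges in Hausdorff distance to a connected compact set $K\subset \mathbb R^2$. The uniform convergence $d(\cdot,\Sigma_n)\to d(\cdot,\Sigma)$ (a consequence of $\Sigma_n\to \Sigma$ and the $1$-Lipschitz property of the distance function) yields $\partial B_r(\Sigma)\subseteq K$: for each $p\in \partial B_r(\Sigma)$, moving $p$ by at most $|d(p,\Sigma_n)-r|$ along the ray through its nearest point in $\Sigma_n$ produces $p_n\in \partial B_r(\Sigma_n)$ with $p_n\to p$. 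Applying Go{\l}{\k{a}}b's theorem to the connected set $K$ then gives
\[
\H(\partial B_r(\Sigma)) \le \H(K) \le \liminf_{n\to\infty} \H(\partial B_r(\Sigma_n)) \le 2\H(\Sigma) + 2\pi r.
\]

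The main delicate point is the approximation step, specifically establishing both Hausdorff and length convergence of $\Sigma_n$ to $\Sigma$ simultaneously; the Steiner-tree construction combined with Go{\l}{\k{a}}b's theorem achieves both at once. The choice of tree approximations is also what keeps $\partial B_r(\Sigma_n)$ connected and permits a direct single application of Go{\l}{\k{a}}b's theorem at the end; working with approximations carrying nontrivial topology would require applying Go{\l}{\k{a}}b's theorem component-by-component, with an additional bookkeeping argument tracking how the connected components of $\partial B_r(\Sigma_n)$ merge in the Hausdorff limit.
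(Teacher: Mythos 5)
Your approximation step (Steiner trees spanning finite nets of $\Sigma$, with Go{\l}{\k{a}}b's theorem giving Hausdorff convergence and convergence of lengths simultaneously) is exactly the paper's argument. The gap is in your limit step: the claim that, since $\Sigma_n$ is a tree, $B_r(\Sigma_n)$ is simply connected and $\partial B_r(\Sigma_n)$ is a single connected curve, is false. A tree can nearly close a loop: take a polygonal arc following a circle of radius much larger than $r$ except for a gap of width smaller than $2r$; its open $r$-neighbourhood covers the gap and becomes an annulus-type region, so its boundary has at least two connected components (an outer curve and an inner curve around the enclosed hole). Hence you cannot apply Theorem~\ref{theorem:Golabs} directly to $\partial B_r(\Sigma_n)$, which is what your chain $\H(K)\le \liminf_n \H(\partial B_r(\Sigma_n))$ requires; nor is the number of components of $\partial B_r(\Sigma_n)$ obviously bounded uniformly in $n$, so a bounded-components variant of Go{\l}{\k{a}}b is not immediately available, and without some connectedness control lower semicontinuity of $\H$ under Hausdorff convergence simply fails. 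Your closing remark shows you saw this danger, but the tree structure of $\Sigma_n$ does not remove it.

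The paper sidesteps precisely this point by using the stronger form of the broken-line result: by~\cite[Theorem~2.2]{fekete1998traveling} there is, for each $n$, a single closed curve $\gamma_n$ with $\partial B_r(\Sigma_n)\subset \gamma_n$ and $\H(\gamma_n)\le 2\H(\Sigma_n)+2\pi r$. Being connected, $\gamma_n$ is a legitimate input for Blaschke selection and Go{\l}{\k{a}}b, any subsequential limit contains $\partial B_r(\Sigma)$, and the bound passes to the limit. So your outline is repairable by quoting the closed-curve statement rather than only its corollary on the boundary length. A secondary, smaller point: your construction of $p_n\in\partial B_r(\Sigma_n)$ with $p_n\to p$ is immediate only when $\dist(p,\Sigma_n)\ge r$ (moving towards the nearest point decreases the distance exactly linearly); when $\dist(p,\Sigma_n)<r$ the distance does not grow at unit speed along the outward ray, so the displacement bound $|\dist(p,\Sigma_n)-r|$ needs a separate justification.
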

\begin{proof}
    Suppose first that $\Sigma$ is a finite union of broken lines. Then by~\cite[Theorem~2.2]{fekete1998traveling} there exists a closed curve $\gamma$ such that $\partial B_r(\Sigma)\subset \gamma$ and
    \[
        \H(\gamma) \leq 2\H(\Sigma) + 2\pi r. 
    \]
    Now assume that $\Sigma$ is arbitrary. We approximate $\Sigma$ with unions of broken lines as follows. For a given $k>0$ let $\mathcal{E}_k\subset \Sigma$ be a finite $1/k$-net and $\Sigma_k$ be an arbitrary solution of the Steiner problem for $\mathcal{E}_k$. By Subsection~\ref{subsect:introSt} $\Sigma_k$ exists and is a finite tree embedded by straight lines. By the definition we have
    \[
        \H(\Sigma_k)\leq \H(\Sigma).
    \] 
    On the other hand, for any subsequential limit (with respect to the Hausdorff distance) $\Sigma'$ of the sequence $\Sigma_k$ we have $\Sigma\subset \Sigma'$ and so
    \[
        \H(\Sigma)\leq \H(\Sigma')\leq \liminf_{k\to \infty} \H(\Sigma_k)
    \]
    by Go{\l}{\k{a}}b's theorem. It follows that $\Sigma_k$ converges to $\Sigma$ and $\H(\Sigma_k)$ converges to $\H(\Sigma)$.

    Let now $\gamma_k$ be a closed curve containing $\partial B_r(\Sigma_k)$ and such that
    \[
        \H(\gamma_k)\leq 2\H(\Sigma_k) + 2\pi r.
    \]
    It is straightforward to see that any subsequential limit of $\gamma_k$ contains $\partial B_r(\Sigma)$. It follows from the Go{\l}{\k{a}}b's theorem and the definition of $\gamma_k$ that
    \[
        \H(\partial B_r(\Sigma))\leq \liminf_{k\to \infty}\H(\gamma_k) \leq 2\H(\Sigma) + 2\pi r
    \]
    as required.
\end{proof}

\begin{corollary}
Let $M\subset \mathbb{R}^2$ be a convex compact set and $\Sigma$ be an $r$-minimizer for $M$. Then 
\[
\H(\Sigma) \geq \frac{\H(\partial M) - 2\pi r}{2}.
\]
\label{Cor:perimeter}
\end{corollary}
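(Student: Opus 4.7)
The plan is to bound $\H(\partial M)$ above by $\H(\partial \overline{B_r(\Sigma)})$ and then apply Lemma~\ref{lemma:fekete} to the right-hand side. Since $\Sigma$ is an $r$-minimizer for $M$, by definition $M\subset \overline{B_r(\Sigma)}$. The convexity of $M$ enters through the nearest-point projection $\pi\colon \mathbb R^2\to M$, which is well-defined and $1$-Lipschitz, so if I can show that $\pi\bigl(\partial \overline{B_r(\Sigma)}\bigr)\supset \partial M$, then the 1-Lipschitz property yields
\[
    \H(\partial M) \leq \H\bigl(\partial \overline{B_r(\Sigma)}\bigr).
\]

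To obtain the surjectivity onto $\partial M$, I would take an arbitrary point $p\in \partial M$ and pick any outward unit normal $v$ to $M$ at $p$ (which exists by the supporting hyperplane theorem). For every $t\geq 0$ the point $p+tv$ projects to $p$, because the supporting half-space property gives $\langle v,p-q\rangle\geq 0$ for all $q\in M$, hence $|p+tv-q|^2\geq t^2+|p-q|^2$. The ray $\{p+tv:t\geq 0\}$ starts inside the bounded set $\overline{B_r(\Sigma)}$ (since $p\in \partial M\subset M\subset \overline{B_r(\Sigma)}$) and must leave it; the first exit point $q$ lies in $\partial\overline{B_r(\Sigma)}$ with $\pi(q)=p$, handling also the degenerate case $t=0$ (in which $p$ itself lies in $\partial\overline{B_r(\Sigma)}$).

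Finally, $\partial\overline{B_r(\Sigma)}=\partial B_r(\Sigma)$ (both equal $\{x:\dist(x,\Sigma)=r\}$ because $B_r(\Sigma)$ is open and coincides with the interior of its closure), so Lemma~\ref{lemma:fekete} gives
\[
    \H(\partial M)\leq \H(\partial B_r(\Sigma))\leq 2\H(\Sigma)+2\pi r,
\]
and rearranging produces the desired inequality $\H(\Sigma)\geq (\H(\partial M)-2\pi r)/2$.

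The only non-trivial step is verifying $\pi(\partial\overline{B_r(\Sigma)})\supset \partial M$; everything else is bookkeeping. This step is essentially a geometric fact about convex bodies enclosed in a larger compact set, and the outward-normal ray argument handles it cleanly without any regularity assumption on $\partial M$.
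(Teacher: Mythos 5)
Your proof is correct, and it reaches the paper's conclusion by a slightly different route. Both arguments share the same backbone: bound $\H(\partial M)$ by the perimeter of the $r$-neighbourhood of $\Sigma$ and then invoke Lemma~\ref{lemma:fekete} to get $\H(\partial B_r(\Sigma))\leq 2\H(\Sigma)+2\pi r$. The difference is in the first inequality: the paper writes $M\subset \conv \overline{B_r(\Sigma)}$ and uses the monotonicity of perimeter for nested convex bodies (via Cauchy's formula), plus the fact that passing to the convex hull does not increase the boundary length; you instead project $\partial\overline{B_r(\Sigma)}$ onto $M$ by the $1$-Lipschitz nearest-point map and show, with the outward-normal ray argument, that this projection covers $\partial M$, so $\H(\partial M)\leq \H(\partial\overline{B_r(\Sigma)})$. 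Your version is self-contained (no appeal to Cauchy's formula or to the convex hull), at the cost of the short surjectivity argument; the paper's version is quicker if one takes the convex-geometry facts as known. One small imprecision: your claim $\partial\overline{B_r(\Sigma)}=\partial B_r(\Sigma)$ is not true for arbitrary $\Sigma$ (the interior of the closure can be strictly larger than the open neighbourhood, e.g.\ when $\Sigma$ nearly encloses a disc of radius $r$, so that an isolated point at distance exactly $r$ becomes interior to the closed neighbourhood); however, the inclusion $\partial\overline{B_r(\Sigma)}\subseteq \partial B_r(\Sigma)=\{x:\dist(x,\Sigma)=r\}$ always holds, and since Hausdorff measure is monotone this inclusion is all you need to pass from $\H(\partial\overline{B_r(\Sigma)})$ to $\H(\partial B_r(\Sigma))$ and apply Lemma~\ref{lemma:fekete}, so the proof stands.
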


\begin{proof}
Since $M \subset \overline{B_r(\Sigma)} \subset \conv \overline{B_r(\Sigma)}$ and both $M$ and $\conv \overline{B_r(\Sigma)}$ are convex one may use a well-known application of the Cauchy’s formula (see~\cite{pach2011combinatorial}) to get
\[
\H(\partial M)\leq \H (\partial \conv B_r(\Sigma)).
\]
Clearly,
\[
\H (\partial \conv B_r(\Sigma)) \leq \H (\partial B_r(\Sigma))
\]
and by Lemma~\ref{lemma:fekete} 
\[
\H (\partial B_r(\Sigma))\leq 2\H(\Sigma)+2\pi r.
\]
Summing up
\[
\H(\partial M)\leq \H (\partial \conv B_r(\Sigma)) \leq \H (\partial B_r(\Sigma))\leq 2\H(\Sigma)+2\pi r.
\]
\end{proof}

It is worth noting that the equality in Corollary~\ref{Cor:perimeter} reaches if and only if $\Sigma$ is a segment or a point and $M = \overline{B_r(\Sigma)}$.

\subsection{Maximal distance minimizers for \texorpdfstring{$n$}{n} points}

Here we analyze whether a set $\Sigma$ is a minimizer for $M \subset \mathbb{R}^d$ consisting of $n$ points $a_1, \dots a_n$. The following observation uses definitions and straightforward local arguments (in fact we may replace $B_r(a_i)$ with a strictly convex $C^1$-smooth set). 

\begin{observation}
\begin{itemize}
    Let $\Sigma$ be an $r$-minimizer for an $n$-point set $M = \{a_1,\dots, a_n\}$. 
    \item[(i)] Then $\Sigma$ is a Steiner tree for $\{b_1,\dots,b_n\}$, where $b_i$ is an arbitrary point from $\overline{B_r(a_i)} \cap \Sigma$.
    \item[(ii)] Let $ b_i \in B_r(a_i) \cap \Sigma$ be a point that does not belong to $\overline{B_r(a_j)}$, $i \neq j$.
    Then $B_i$ is an inner point of a segment of $\Sigma$ or a branching point of $\Sigma$.
    \item[(iii)] Let $b_i \in \partial B_r(a_i) \cap \Sigma$ be a point that does not belong to $\overline{B_r(a_j)}$, $i \neq j$. 
    \begin{itemize}
        \item[a)] If $b_i$ has degree 1, then it is an end of a segment of $\Sigma$, which is contained in $[a_ib_i)$, i.e. is orthogonal to $\partial B_r(a_i)$.
        \item[b)] If $b_i$ has degree 2, it may be an inner point of a segment of $\Sigma$ or $[a_ib_i)$ contains 
        the bisector of the angle $B_i$ in $\Sigma$.
        \item[c)] If $b_i$ has degree 3 then it is a branching point of $\Sigma$.
    \end{itemize}
    
    \item[(iv)] Let $r' < r$ be a positive real. Then $\Sigma$ is an $r'$-minimizer for some $n$-point set $M'$. 
    
\end{itemize}
\label{Obs:local}
\end{observation}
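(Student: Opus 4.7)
My plan is to prove the four items by local perturbation arguments at the chosen points $b_i$, using only the minimality of $\Sigma$ for $(M,r)$; throughout I write $u_i := (b_i - a_i)/|b_i - a_i|$ when $b_i \ne a_i$, which for $b_i \in \partial B_r(a_i)$ is the outward unit normal to $\partial B_r(a_i)$ at $b_i$. Item (i) is immediate from minimality: if a connected compact $\Sigma'$ containing all the $b_i$ had $\H(\Sigma') < \H(\Sigma)$, then since $b_i \in \overline{B_r(a_i)}$ we would have $\dist(a_i,\Sigma') \le |a_i - b_i| \le r$ for every $i$, contradicting the $r$-minimality of $\Sigma$ for $M$. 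For item (ii), since $b_i \in B_r(a_i)$ and $b_i \notin \overline{B_r(a_j)}$ for $j \ne i$, I can choose $\varepsilon > 0$ with $B_\varepsilon(b_i) \subset B_r(a_i)$ and $B_\varepsilon(b_i) \cap \overline{B_r(a_j)} = \emptyset$ for every $j\ne i$; then any $p \in \partial B_\varepsilon(b_i) \cap \Sigma$ (non-empty since $\Sigma$ is connected and non-degenerate) has $|p - a_i| < r$ while the coverage of the other $a_j$'s is unaffected, so $b_i$ is non-energetic and basic property~(c) yields the segment-or-tripod local structure.

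The heart of the observation is item (iii). I perturb $b_i$ to $b_i' := b_i + \varepsilon v$ and let $t_1,\dots,t_k$, $k\in\{1,2,3\}$, be the unit tangents of $\Sigma$ at $b_i$ along its incident segments oriented \emph{toward} $b_i$. To leading order the constraint $|a_i - b_i'| \le r$ reads $u_i\cdot v\le 0$, the total length of $\Sigma$ changes by $\varepsilon(t_1+\dots+t_k)\cdot v$, and the coverage of the other $a_j$'s is preserved for small $\varepsilon$. Minimality of $\Sigma$ thus rules out any $v$ with $u_i\cdot v\le 0$ and $(t_1+\dots+t_k)\cdot v<0$; a standard half-space inclusion argument then gives either $t_1+\dots+t_k=0$ or $t_1+\dots+t_k = -\lambda u_i$ for some $\lambda>0$. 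For $k=1$ this forces $t_1=-u_i$, so the unique incident segment lies on the ray $[a_ib_i)$ beyond $b_i$ and is orthogonal to $\partial B_r(a_i)$. For $k=2$ either $t_1=-t_2$ (so $b_i$ is an interior point of a segment) or $t_1+t_2 = -\lambda u_i$, which makes the outgoing angle bisector $-(t_1+t_2)/|t_1+t_2|$ equal to $u_i$ and places the bisector ray inside $[a_ib_i)$. For $k=3$ the Steiner-tree description from (i) already forces three segments at pairwise angles $2\pi/3$ in a plane, so $t_1+t_2+t_3=0$; since $u_i\cdot(t_1+t_2+t_3)=0$, some $t_\ell$ satisfies $u_i\cdot t_\ell>0$ (always true in $d=2$ and generically in higher dimensions), and the argument of~(ii) applied to a point on the corresponding segment slightly inside $B_r(a_i)$ shows that $b_i$ is non-energetic, hence a branching point of $\Sigma$ by basic property~(c).

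For item (iv) I set $a_i' := b_i + (r'/r)(a_i - b_i)$ and $M' := \{a_1',\dots,a_n'\}$, which forms an $n$-point set (distinctness is a short verification: a coincidence $a_i'=a_j'$ forces a common point $c$ at distance at most $r'$ from both $a_i,a_j$, which, in combination with the Steiner tree structure from~(i) and the minimality of $\Sigma$, leads to a contradiction for generic configurations). Feasibility $\dist(a_i',\Sigma) \le |a_i'-b_i| = (r'/r)|a_i-b_i| \le r'$ is immediate. Conversely, any connected compact $\Sigma'$ with $\dist(a_i',\Sigma') \le r'$ for all $i$ satisfies, by the triangle inequality and the identity $|a_i - a_i'| = (1 - r'/r)|a_i - b_i| \le r - r'$, the bound $\dist(a_i,\Sigma') \le r$; so $\Sigma'$ is a competitor for the original $(M,r)$-problem and $\H(\Sigma') \ge \H(\Sigma)$. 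The one step I expect to require genuine care is the degenerate subcase of~(iii.c) in dimensions $d \ge 3$ when $u_i$ is orthogonal to the plane of the tripod, where the first-order non-energeticity argument breaks down; a direct second-order expansion using $t_1+t_2+t_3=0$ nevertheless shows that no feasible perturbation of $b_i$ can shorten $\Sigma$, so the tripod configuration remains locally optimal and the claim is recovered.
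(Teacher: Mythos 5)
The paper never writes out a proof of this observation (it is dismissed as ``definitions and straightforward local arguments''), and your items (i), (ii) and the first-order perturbation scheme in (iii) are exactly the intended kind of argument and are sound in outline. However, two steps have genuine gaps as written. In (iii) you move the vertex $b_i$ to $b_i+\eps v$ and assert that ``the coverage of the other $a_j$'s is preserved for small $\eps$''. This is not automatic: moving $b_i$ displaces the \emph{entire} incident segments, and another terminal $a_m$ may be covered only at a point in the relative interior of such a segment at distance exactly $r$, so an arbitrarily small move of $b_i$ can break its coverage. The repair is a local surgery: fix the points of $\Sigma$ at arclength $\delta$ from $b_i$ on each incident branch and replace only the star inside $B_\delta(b_i)$ by the segments joining these fixed points to $b_i+\eps v$, with $\eps\ll\delta$; since $b_i\notin\overline{B_r(a_j)}$ for $j\neq i$, the modified region avoids every $\overline{B_r(a_j)}$, and the same first-order expansion (with an $O(\eps^2/\delta)$ error) gives your half-space conclusion. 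Also note that (iii)c needs none of your energetic/degenerate-case discussion: by (i) the point $b_i$ is a degree-$3$ vertex of a Steiner tree, where adjacent edges meet at angles at least $2\pi/3$; since the three pairwise angles of three unit vectors sum to at most $2\pi$, all angles equal $2\pi/3$ and the edges are coplanar, i.e.\ $b_i$ is a branching point. Your closing remark that a second-order expansion ``recovers the claim'' in the degenerate case establishes only that the configuration is not locally improvable, which is not the statement being proved; it can simply be dropped.

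In (iv) the statement requires an $n$-point set $M'$, and your distinctness step (``a contradiction for generic configurations'') is not a proof --- genericity is not available here. The fix is to specify the choice of $b_i$: take $b_i$ to be a point of $\Sigma$ nearest to $a_i$. Then $b_i$ is also a nearest point of $\Sigma$ to every point of $[b_ia_i]$, in particular $\dist(a_i',\Sigma)=|a_i'-b_i|$. If $a_i'=a_j'=:c$ with $b_i\neq b_j$, then $b_i$ and $b_j$ are two nearest points of $\Sigma$ to $c$ at the same distance, and since $a_i$ lies on the ray from $b_i$ through $c$ beyond $c$, the triangle inequality gives $|a_i-b_j|\le|a_i-c|+|c-b_j|=|a_i-c|+|c-b_i|=|a_i-b_i|$, with equality only if $b_j$ lies on that ray at distance $|c-b_i|$ beyond $c$, i.e.\ $b_j=b_i$; hence $|a_i-b_j|<\dist(a_i,\Sigma)$, which is absurd. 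So $b_i=b_j$, and then $a_i'=a_j'$ forces $a_i=a_j$, contradicting that $M$ has $n$ distinct points. With this choice the $a_i'$ are distinct, and the rest of your argument for (iv) (feasibility of $\Sigma$ for $(M',r')$ plus $|a_i-a_i'|\le r-r'$ making any competitor for $(M',r')$ a competitor for $(M,r)$) is correct.
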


Let $\rho$ stands for the Chebyshev distance between $n$-tuples $v = (v_1,\dots,v_n)$ and $u = (u_1,\dots, u_n)$, i.e.
\[
\rho(u,v) = \max_{1 \leq i \leq n} \dist (v_i,u_i),
\]
where $v_i$, $u_i$ are points in the space.

\begin{theorem}
Let $\St$ be a Steiner tree for terminals $A = (a_1,\dots, a_n)$, $a_i \in \mathbb{R}^d$ such that every Steiner tree for an $n$-tuple in the closed $2r$-neighbourhood of $A$ (with respect to Chebyshev distance) has the same topology as $\St$ for some positive $r$. Then $\St$ is an $r$-minimizer for an $n$-tuple $M$.
\end{theorem}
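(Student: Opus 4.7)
The plan is to construct $M = \{y_1,\dots,y_n\}$ explicitly with $|y_i - a_i|\leq r$ for each $i$, and then deduce the minimising property of $\St$ from the convexity of the Steiner length as a function of terminal positions, with the topology held fixed. For each $i$, let $U_i$ denote the sum of the outward unit vectors at $a_i$ along the edges of $\St$ incident to $a_i$. Because every degree-$3$ vertex of a Steiner tree has three consecutive angles equal to $2\pi/3$, we have $U_i = 0$ when $a_i$ has degree $3$; the condition ``angle $\geq 2\pi/3$'' gives $|U_i|\leq 1$ for degree-$2$ terminals, and $|U_i|=1$ for degree-$1$ terminals. I set $y_i = a_i + rU_i/|U_i|$ when $U_i\neq 0$ and $y_i = a_i$ otherwise. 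Since $|y_i - a_i|\leq r$ and $a_i\in \St$, the set $\St$ is admissible for $M$.

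The key analytic ingredient is the following. Let $T$ be the topology of $\St$ and define, for any $n$-tuple $P = (p_1,\dots,p_n)$,
\[
L_T(P) = \inf_S \sum_{(u,v)\in E(T)} |u-v|,
\]
where $S$ ranges over positions of the Steiner points of $T$. As a sum of Euclidean norms of affine functions of $(P,S)$, the integrand is jointly convex, hence $L_T$ is convex in $P$. By Proposition~\ref{melzakuniq} the optimal $S$ is unique whenever $T$ is realisable for $P$, so the envelope theorem yields $\nabla_{p_i} L_T(A) = U_i$. Checking the convexity of $L_T$ together with this identification of its gradient at $A$ is the main technical step; everything else reduces to a first-order inequality.

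With this in hand, the argument is short. Take any connected compact $\Sigma'$ with $\max_i \dist(y_i,\Sigma')\leq r$ and pick $b_i\in \Sigma'$ with $|b_i - y_i|\leq r$. Then $|b_i - a_i|\leq 2r$, so the tuple $B = (b_1,\dots,b_n)$ lies in the closed $2r$-Chebyshev neighbourhood of $A$, and by hypothesis every Steiner tree for $B$ has topology $T$. Consequently $\H(\Sigma')\geq \H(\St(B)) = L_T(B)$, and convexity of $L_T$ together with the gradient formula gives
\[
L_T(B) \geq L_T(A) + \sum_i U_i \cdot (b_i - a_i).
\]
Finally, for each $i$ with $U_i\neq 0$ the construction of $y_i$ yields $U_i\cdot(b_i - a_i) = U_i\cdot(b_i - y_i) + r|U_i|\geq -r|U_i| + r|U_i| = 0$, and when $U_i=0$ the corresponding term vanishes. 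Combining, $\H(\Sigma')\geq L_T(A) = \H(\St)$, which proves that $\St$ is an $r$-minimizer for $M$. Shrinking $r$ if necessary (which preserves the hypothesis) guarantees that the $y_i$ are distinct, so $|M|=n$.
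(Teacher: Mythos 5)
Your proof is correct, and although your $M$ coincides exactly with the paper's (for a degree-$1$ terminal $y_i$ lies on the prolongation of the incident edge beyond $a_i$, for a degree-$2$ terminal on the outer bisector; your formula $y_i=a_i+rU_i/|U_i|$ reproduces precisely the paper's construction of $b_i$), the optimality argument is genuinely different. The paper argues structurally: it takes an $r$-minimizer $\Sigma$ for $M$, invokes Observation~\ref{Obs:local}(i) to write $\Sigma$ as a Steiner tree for points $c_i\in\overline{B_r(b_i)}$, uses the stability hypothesis to conclude that this tree has topology $T$, and then applies Lemma~\ref{keylemma} (uniqueness of locally minimal trees with topology $T$ and terminals constrained to disjoint strictly convex $C^1$ sets, itself proved via convexity of the network length) to identify $\Sigma$ with $\St$. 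You instead use the convexity quantitatively and directly: any competitor $\Sigma'$ contains points $b_i$ with $|b_i-y_i|\leq r$, hence $|b_i-a_i|\leq 2r$, so by the hypothesis $\H(\Sigma')\geq \H(\St(B))=L_T(B)$, and the first-order inequality $L_T(B)\geq L_T(A)+\sum_i U_i\cdot(b_i-a_i)\geq L_T(A)=\H(\St)$ follows from the choice of the $y_i$. This buys elementarity and self-containedness: you need neither the existence/regularity theory of maximal distance minimizers nor Lemma~\ref{keylemma}, existence of a minimizer for your $M$ comes for free, and degree-$3$ terminals need not be excluded (the paper rules them out via the stability condition; in your argument $U_i=0$ handles them). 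What the paper's route buys is the stronger, uniqueness-flavoured conclusion that every $r$-minimizer for $M$ is the set $\St$, plus a lemma that is reused elsewhere in the paper. Two small touch-ups: the envelope-theorem step only needs that $(U_1,\dots,U_n)$ is a subgradient of $L_T$ at $A$, which is immediate because the joint length $\ell(P,S)$ is convex and differentiable at $(A,S^*)$ (all edges of $\St$ are nondegenerate) with $\nabla_S\ell(A,S^*)=0$ by the $2\pi/3$ condition, so Melzak's uniqueness is not actually needed; and the final shrinking of $r$ is superfluous (and would only yield an $r'$-minimizer rather than an $r$-minimizer): the hypothesis forces $|a_i-a_j|>4r$ for $i\neq j$ — otherwise both terminals could be moved to their midpoint within the closed $2r$-neighbourhood, collapsing two labelled terminals and changing the topology, exactly the collision argument the paper itself uses — hence $|y_i-y_j|>2r$ and the $y_i$ are automatically distinct.
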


\begin{proof}
Note that the condition on $\St$ implies that $B_r(a_i) \cap B_r(a_j)$ is empty for $i \neq j$, otherwise one may shift $a_i$ and $a_j$ into the same point making a tree with a topology different from the topology of $\St$. 
Also by Proposition~\ref{melzakuniq} $\St$ is the unique Steiner tree for its terminals.

Construct $M = \{b_1,\dots,b_n\}$ as follows. 
If $a_i$ has degree 1, i.e. $B_\varepsilon (a_i) \cap \St$ is a segment $[a_it[$ for a small enough $\varepsilon > 0$, then $b_i$ is an intersection of ray $[ta_i)$ with $\partial B_r(a_i)$. 
If $a_i$ has degree 2, then $b_i$ is the point in the outer bisector of $\angle a_i$ at the distance $r$ from $a_i$.
Terminal $a_i$ can not have degree 3, otherwise any perturbation of $a_i$ preserving other terminals changes the topology, which contradicts the stability condition of the theorem.

Let $\Sigma$ be an $r$-minimizer for $\{b_1,\dots,b_n\}$. By Observation~\ref{Obs:local}(i) $\Sigma$ is a Steiner tree for some points $c_1,\dots c_n$, where $c_i \in \overline{B_r(b_i)}$.
By the triangle inequality $\dist(a_i,c_i) \leq \dist(a_i,b_i) + \dist(b_i,c_i) = 2r$, so one has $\rho(\{a_i\},\{c_i\}) \leq 2r$.
By the stability assumption $\Sigma$ has the same topology as $\St$. 

We need the following argument, which uses a convexity of length in the vein of~\cite{gilbert1968steiner} to show that a realization of a topology is unique.

\begin{lemma}
Let $\{\mathcal{A}_i\}_{i=1}^n$ be a family of $n$ disjoint strictly convex $C^1$-smooth closed sets in $\mathbb{R}^d$, $T$ be a topology with $n$ terminals. Then all trees with the topology $T$ such that for every $i$ the terminal with label $i$ belongs to $\mathcal{A}_i$ with locally minimal length coincide as a (non-labelled) subsets of $\mathbb{R}^d$. 
\label{keylemma}
\end{lemma}

Lemma~\ref{keylemma} is proved in Section 3.

Application of Lemma~\ref{keylemma} with $\mathcal{A}_i = \overline{B_r(b_i)}$ shows that there is unique a local minimizer. By the construction it is $\St$. 

\end{proof}

\begin{proposition}
Let $\St$ be a full Steiner tree for terminals $a_1,\dots, a_n \in \mathbb{R}^d$, and let $q$ be the smallest distance between a terminal $a_i$ and the branching point adjacent to $a_i$ among $i=1, \dots, n$.
Then $r < q$ implies that $\St \setminus \cup_{i = 1}^n B_r(a_i)$ is an $r$-minimizer for $M = \{a_1,\dots, a_n\}$. 
\end{proposition}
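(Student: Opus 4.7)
The plan is to compare $\Sigma := \St \setminus \bigcup_{i=1}^{n} B_r(a_i)$ against any feasible competitor and derive a contradiction from the Steiner-tree optimality of $\St$. First I would verify that $\Sigma$ is itself a feasible candidate of length at most $\H(\St) - nr$. Since $\St$ is full, each terminal $a_i$ has degree one, so it is joined to a unique adjacent branching point $s_i$ by a segment of length $|a_i - s_i|\geq q > r$. Thus $B_r(a_i)$ only trims a sub-segment of length $r$ from the $a_i$-end of the pendant edge $[a_i, s_i]$, leaving $s_i$ and the rest of $\St$ intact; in particular $\Sigma$ is a connected compact set. Letting $p_i\in[a_i,s_i]$ be the point with $|p_i - a_i| = r$, one checks $p_i\in\Sigma$ (so $\dist(a_i,\Sigma)\leq r$ and $M\subset\overline{B_r(\Sigma)}$), and the $n$ half-open trimmed segments $[a_i,p_i[$ are pairwise disjoint subsets of $\St\cap\bigcup_i B_r(a_i)$, each of length $r$, giving $\H(\Sigma)\leq \H(\St)-nr$.

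The minimality argument is an extension/reduction. Suppose for contradiction that there exists a connected compact $\Sigma'\subset\mathbb{R}^d$ with $M\subset\overline{B_r(\Sigma')}$ and $\H(\Sigma') < \H(\Sigma)$. For each $i$ pick $b_i\in\Sigma'$ realising the distance from $a_i$, so that $|a_i - b_i|\leq r$. Form the augmented set
\[
    \Sigma'' \;:=\; \Sigma' \cup \bigcup_{i=1}^{n} [a_i\,b_i].
\]
Each segment $[a_i\,b_i]$ meets the connected set $\Sigma'$ at $b_i$, so $\Sigma''$ is connected, compact, and contains every $a_i$; hence it is admissible for the Steiner problem on $M$. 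By subadditivity of $\H$,
\[
    \H(\Sigma'') \;\leq\; \H(\Sigma') + \sum_{i=1}^{n} |a_i - b_i| \;\leq\; \H(\Sigma') + nr \;<\; \H(\Sigma) + nr \;\leq\; \H(\St),
\]
contradicting the defining minimality of the Steiner tree $\St$ for $M$.

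I expect the main obstacle to be the feasibility verification, specifically showing that the boundary point $p_i$ on the pendant edge is not swallowed by some other open ball $B_r(a_j)$ with $j\neq i$. This is where one genuinely uses the full Steiner structure: the $2\pi/3$-angle conditions at each branching point combined with $|a_i s_i|,|a_j s_j|\geq q > r$ force the relevant pendant segments to stay out of each other's $r$-neighborhoods, so that $p_i\in\Sigma$ and the length accounting $\H(\Sigma)\leq \H(\St)-nr$ is tight. Once this geometric fact is in hand, the contradiction argument above is purely a one-line length estimate against the minimality of $\St$.
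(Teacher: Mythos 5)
Your argument is structurally the same as the paper's: trim the pendant edges by the balls $B_r(a_i)$, record that this removes length $nr$, and then defeat any shorter competitor $\Sigma'$ by adjoining the segments $[a_ib_i]$ of length at most $r$ to obtain a connected set containing all terminals of length $<\H(\St)$, contradicting the Steiner minimality of $\St$. That part is correct and is exactly the paper's reduction.

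The one step you leave open — that each ball $B_r(a_i)$ meets $\St$ only in the initial sub-segment of the pendant edge $[a_is_i]$, so that $p_i\in\Sigma$ and $\Sigma$ stays connected — does need an argument (without it, a ball could in principle swallow an interior piece of some other edge, destroying both feasibility and connectedness of $\Sigma$), but the mechanism you propose for it is not the right one: the $2\pi/3$-angle conditions are purely local and hold for any locally minimal tree, and a locally minimal tree can perfectly well have a long pendant edge of another terminal passing within distance $r$ of $a_i$ (think of a hairpin-shaped locally minimal tree). What closes the gap is the \emph{global} minimality of $\St$, via a one-line exchange: if some $x\in \St\setminus[a_is_i]$ had $|x-a_i|<r$, replace the half-open pendant edge $[a_is_i[$ by the segment $[a_ix]$; the resulting set is connected, contains all terminals, and is shorter by at least $|a_is_i|-|a_ix|>q-r>0$, contradicting the minimality of $\St$. (Since distinct edges of the embedded tree meet only at common vertices and $r<q$, this also gives $p_i\notin B_r(a_j)$ for $j\neq i$, hence $p_i\in\Sigma$, and in fact the exact equality $\H(\Sigma)=\H(\St)-nr$ stated in the paper.) With this observation inserted, your proof is complete and coincides with the paper's.
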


\begin{proof}
Since $\St$ is full every $a_i$ has degree 1.
Note that the intersection of $\partial B_r(a_i)$ and $\St$ consists of a unique point because $r < q$ and $\St$ is a Steiner tree; call this point $b_i$.
Note that $\St_1 := \St \setminus B_r(a_i)$ is a connected tree containing points $b_i$ and
\[
\H (\St_1) = \H (\St) - nr.
\]
If there is a shorter tree $\St_2$ connecting some points $c_i \in \overline{B_r(a_i)}$, then set $\St_3 := \St_2 \cup [a_1c_1] \cup \dots [a_nc_n]$
connects points $a_i$ and 
\[
\H (\St_3) \leq \H (\St_2) + nr < \H (\St_1) + nr = \H (\St)
\]
which contradicts the fact that $\St$ is a Steiner tree for $\{a_i\}$.
\end{proof}

We need the following theorem to extend the example of square, mentioned in the abstract.

\begin{theorem}[Oblakov~\cite{oblakov2009non}]
There are no two distinct topologies $T_1$ and $T_2$ and a planar configuration $P$ such that locally minimal trees $S_{T_1}(P)$ and $S_{T_2}(P)$ are codirected at terminals.
\label{oblakov}
\end{theorem}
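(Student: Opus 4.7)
The plan is to argue by induction on the number of terminals $n$, using Melzak's construction to peel off a sibling pair. For $n \leq 3$ only one topology is realizable, so the statement is vacuous and serves as the base.

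The first step is to reduce to the case when both $T_1$ and $T_2$ are full, i.e.\ every terminal has degree $1$. If a terminal $p_i$ has degree $k\geq 2$ in $T_1$, the local $\geq 2\pi/3$ angle condition at $p_i$ (Theorem~\ref{theoGT}(1)) forces the $k$ outgoing edges to point in pairwise distinct directions; codirected-ness then gives the same $k$ directions, hence the same degree, at $p_i$ in $T_2$. The $k$ maximal subtrees rooted at $p_i$ pair up direction by direction; each pair consists of two locally minimal trees on strictly fewer terminals that are still codirected at their terminals, so the induction hypothesis identifies the topologies of each pair, and hence of $T_1$ and $T_2$.

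Assume then that both $T_1,T_2$ are full. Pick a pair of sibling leaves $p_i,p_j$ of $T_1$ sharing a Steiner point $s$. Codirected-ness places the $T_2$-Steiner point adjacent to $p_i$ on the ray $[p_is)$ and the $T_2$-Steiner point adjacent to $p_j$ on $[p_js)$. In the sub-case that $p_i,p_j$ are also sibling leaves of $T_2$, these two $T_2$-Steiner points coincide with $s$ (they lie on both rays), so the tripods at $s$ match exactly in both trees. Melzak's reduction can then be performed simultaneously in $T_1$ and $T_2$: replace the cherry $\{p_i,p_j\}$ by the equivalent terminal $e$, defined as the third vertex of the equilateral triangle on $[p_ip_j]$ on the side opposite the third edge at $s$. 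Since $e$, $s$ and the next vertex $v$ along that edge are collinear by construction, the new edge at $e$ points in the common direction of the third edge at $s$, and the reduced trees on $n-1$ terminals are still codirected at their terminals, so induction finishes this sub-case.

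The main obstacle is the remaining sub-case, where $p_i$'s sibling in $T_2$ is some $p_k\neq p_j$: the $T_2$-Steiner point adjacent to $p_i$ lies on $[p_is)$ but has its third edge going to $p_k$, while the $T_2$-Steiner point adjacent to $p_j$ lies on $[p_js)$ with a third edge going elsewhere, and one must extract a global incompatibility from these local $120^\circ$ constraints. My plan is to rule out this sub-case by showing that a first cherry can always be chosen to be a joint cherry of both trees --- for instance, via an extremal argument on $\conv P$: a vertex of $\conv P$ is a leaf in any full locally minimal tree, its outgoing direction is common to $T_1$ and $T_2$ by codirected-ness, and the $2\pi/3$ rigidity then pins down its sibling uniquely and identically in both trees. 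A backup plan is variational: codirected-ness is exactly $\nabla L_{T_1}(P) = \nabla L_{T_2}(P)$ for the strictly convex (on its realizability domain) length functionals $L_{T_i}$, and one aims to exclude two distinct convex branches from meeting tangentially at $P$.
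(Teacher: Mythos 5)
Your proposal does not close the theorem, and the gap you yourself flag is not a technicality: it is the entire content of the statement. (Note also that the paper offers no proof to compare against --- Theorem~\ref{oblakov} is imported from Oblakov's paper --- so your attempt has to stand on its own.) In the sub-case where the cherry $\{p_i,p_j\}$ of $T_1$ is not a cherry of $T_2$, neither of your two plans works. The extremal plan fails because codirectedness at a terminal prescribes only the \emph{direction} of the incident edge, not its length: at a vertex $p$ of $\conv P$ the adjacent Steiner point of $T_1$ and that of $T_2$ lie on the same ray from $p$ but possibly at different distances, and their third edges may lead to different parts of the configuration; no local $2\pi/3$ rigidity ``pins down the sibling'', and ruling out exactly this divergence is what the theorem asserts. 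The variational backup is circular: for a fixed realizable topology $T$ the gradient of $P\mapsto\H(S_T(P))$ in the variable $p_i$ is the unit vector of the edge of $S_T(P)$ at $p_i$, so ``codirected at terminals'' \emph{is} the equality of gradients $\nabla L_{T_1}(P)=\nabla L_{T_2}(P)$; these functionals are convex but not strictly convex, there is no inequality between them to exploit (locally minimal realizations need not be globally minimal), and two distinct convex functions can perfectly well have equal gradients at a point. Excluding that is again the theorem, not a consequence of convexity.

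There are secondary gaps as well. In the reduction to full topologies you pair up the branches of $T_1$ and $T_2$ at a terminal of degree $\ge 2$ ``direction by direction'' and invoke the induction hypothesis, but the hypothesis concerns two trees on the \emph{same} terminal configuration: nothing in the codirectedness assumption guarantees that the partition of the remaining terminals among the $k$ branches is the same in $T_1$ and $T_2$, and without that the paired branches are trees on different point sets and the induction does not apply. The base case also deserves care: for $n=3$ several topologies (the full one and the three paths) can be discussed, so ``only one topology is realizable'' needs an argument using the angle conditions of Theorem~\ref{theoGT}. In short, what you have is a correct and standard Melzak-type reduction for the easy sub-cases, plus an honest acknowledgement of the hard one; the hard one requires a genuinely global argument (Oblakov's proof analyzes the two trees jointly rather than peeling cherries), and no such argument is present in the proposal.
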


\begin{corollary}
Suppose that $\St$ is a full Steiner tree for terminals $a_1,\dots, a_n \in \mathbb{R}^2$, which is not unique. 
Then $\St$ can not be a minimizer for $M$ being an $n$-tuple of points. 
\label{cor:notaminimizer}
\end{corollary}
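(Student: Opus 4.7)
The plan is to argue by contradiction: suppose $\St$ is an $r$-minimizer for some $n$-tuple $M=\{b_1,\ldots,b_n\}$. First I would invoke Observation~\ref{Obs:local}(iv), so after shrinking $r$ (and accordingly replacing $M$) I may assume $r$ is so small that the balls $\overline{B_r(a_i)}$ are pairwise disjoint and $\St\cap\overline{B_r(a_i)}$ is a single segment for every $i$; fullness of $\St$ makes each $a_i$ a degree-$1$ leaf, so this is possible.

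To pin down each $b_i$, I apply Observation~\ref{Obs:local}(i): $\St$ is a Steiner tree for any choice $c_i\in\overline{B_r(b_i)}\cap\St$. Because the leaves of a Steiner tree must be terminals and the leaves of the full tree $\St$ are precisely $\{a_1,\ldots,a_n\}$, the only admissible choice (after relabeling) is $c_i=a_i$. Part~(ii) of Observation~\ref{Obs:local}, whose side condition follows from the ball-disjointness, rules out $|a_i-b_i|<r$, since otherwise the leaf $a_i$ would be an interior or branching point of $\St$; hence $|a_i-b_i|=r$. Part~(iii)(a) at the degree-$1$ vertex $a_i$ forces the edge of $\St$ at $a_i$ to lie along $[b_i a_i)$; writing $v_i$ for this unit direction, I get $b_i=a_i-rv_i$, so $M$ is fully determined by $\St$ and $r$.

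Non-uniqueness now provides a second Steiner tree $\St'\neq\St$ for $\{a_i\}$ with $\H(\St')=\H(\St)$. Since $b_i\in\overline{B_r(a_i)}\subseteq\overline{B_r(\St')}$ for every $i$, the tree $\St'$ is an $r$-cover of $M$ with the same length as $\St$, hence also an $r$-minimizer for $M$. I then apply Observation~\ref{Obs:local} to $\St'$ at each point $a_i\in\partial B_r(b_i)\cap\St'$. In the main sub-case, $\St'$ is also full: every $a_i$ has degree $1$ in $\St'$, and part~(iii)(a) forces the corresponding tangent direction to equal $v_i$. Then $\St$ and $\St'$ are codirected at every terminal; Theorem~\ref{oblakov} forces them to share a topology, and Proposition~\ref{melzakuniq} then forces $\St=\St'$, contradicting $\St'\neq\St$.

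The hard part will be the remaining sub-case, in which some $a_k$ has degree $2$ or $3$ in $\St'$. Parts~(iii)(b),(c) of Observation~\ref{Obs:local} then only force $\St'$ to be locally one of: a regular tripod at $a_k$, a straight segment through $a_k$, or a pair of edges at $a_k$ with bisector $v_k$; none of these matches the single tangent direction of $\St$ at $a_k$, so Theorem~\ref{oblakov} does not apply directly. To close the gap I would exploit that in the degree-$3$ sub-case the three unit edge vectors at $a_k$ sum to zero, so at least one has negative inner product with $v_k$ and thus enters the open ball $B_r(b_k)$; using Observation~\ref{Obs:local}(i) to choose $c_k'\in\overline{B_r(b_k)}\cap\St'$ strictly inside the ball, I would then truncate and re-route $\St'$ near $a_k$ to build a strictly shorter $r$-cover of $M$, contradicting the minimality of $\St$. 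I expect the main technical obstacle to be the degenerate degree-$2$ sub-cases, where every edge of $\St'$ at $a_k$ has non-negative projection on $v_k$ (bisector $v_k$ with half-angle at most $\pi/2$, or $\St'$ straight through $a_k$ perpendicular to $v_k$) and no edge enters $B_r(b_k)$, forcing a global rerouting or higher-order argument.
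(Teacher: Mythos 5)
Your first half (pinning down $b_i=a_i-rv_i$, observing that $\St'$ covers $M$ with the same length and is therefore also an $r$-minimizer, and closing the case where every $a_i$ has degree $1$ in $\St'$ via Theorem~\ref{oblakov} and Proposition~\ref{melzakuniq}) coincides with the paper's argument. But the case you explicitly leave open --- a terminal $a_k$ of degree $2$ in $\St'$ --- is precisely the crux of the statement, so the proposal has a genuine gap. Moreover, the surgery you sketch for degree $3$ does not obviously work either: $\St'$ has the same length as $\St$ and is itself an $r$-minimizer for $M$, so to ``contradict minimality'' you must actually exhibit a strictly shorter connected $r$-cover; trimming at $a_k$ is impossible because $a_k$ is a branching point (removing any of its edges disconnects the tree), and $\St'$ is already length-minimal for the terminal set $\{a_i\}$, so any shortening would have to abandon several terminals at once --- nothing in the local picture produces that.

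The paper closes the degree-$2$ case by a global direction-propagation argument rather than local surgery. Since $\St$ is \emph{full}, all its edges are parallel to three fixed lines $l_1,l_2,l_3$ at mutual angles $\pi/3$, and the edge of $\St$ at each $a_i$ lies on the line $(a_ib_i)$. Decompose $\St'$ into full components; a degree-$1$ terminal of $\St'$ is codirected with $\St$ there (Observation~\ref{Obs:local}(iii)a), so that component's edges are also parallel to $l_1,l_2,l_3$, and Observation~\ref{Obs:local}(iii)b propagates this property across shared terminals to adjacent components: at a degree-$2$ terminal the bisector of the angle of $\St'$ lies on $(a_ib_i)$, which together with the $\{l_1,l_2,l_3\}$-constraint forces the angle to be exactly $2\pi/3$ with each edge making angle $\pi/3$ with $(a_ib_i)$. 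Consequently, appending the same short stubs $[a_ib_i]\cap\overline{B_\varepsilon(a_i)}$ to \emph{both} trees yields two locally minimal trees (a straight continuation for $\St$, a regular tripod for $\St'$ at degree-$2$ terminals) which are codirected at their new terminals; Theorem~\ref{oblakov} and Proposition~\ref{melzakuniq} then give $\St=\St'$, a contradiction. In short, the missing idea is not a shortening argument but the modification of both competitors by identical stubs so that Oblakov's theorem applies even when $\St'$ is not full; you would need to supply this (or an equivalent treatment of the degree-$2$ and degree-$3$ terminals) for the proof to be complete.
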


It is interesting whether the fullness condition in Corollary~\ref{cor:notaminimizer} is neсessary.

\begin{proof}
Denote by $\St' \neq \St$ an arbitrary Steiner tree for $a_1,\dots, a_n$.
Assume the contrary, i.e. $\St$ is an $r$-minimizer for some $n$-tuple $M = \{b_1, \dots, b_n\}$. Then for every $i$ the segment of $\St$ ending at $a_i$ belongs to the same line with $b_i$. Note that if $i$ has degree 1 in $\St'$ then $\St$ is codirected with $\St'$ at $a_i$.
We claim that if $i$ has degree 2 in $\St'$ then the angles between $\St$ and $\St'$ at $a_i$ are $\pi/3$. 
The claim implies that 
\[
\St \cup \bigcup_{i=1}^n ([a_ib_i] \cap \overline {B_\varepsilon(a_i)}) \quad \quad \mbox{and} \quad \quad  \St' \cup \bigcup_{i=1}^n ([a_ib_i] \cap \overline {B_\varepsilon(a_i)})
\]
are codirected locally minimal trees for a small enough $\varepsilon > 0$. By Theorem~\ref{oblakov} they have the same topology and then by Proposition~\ref{melzakuniq} they coincide which implies $\St = \St'$.

Now let us prove the claim. Let $S_1,\dots, S_k$ be full components of $\St'$; we call $S_i$ and $S_j$ \textit{adjacent} if they share a terminal.
By the construction all segments of full Steiner tree $\St$ are parallel to one of lines $l_1$, $l_2$, $l_3$ having all pairwise angles equal to $\pi/3$.
Since $\St'$ has a terminal of degree 1 (without loss of generality it is contained in $S_1$), all segments of $S_1$ are also parallel to one of lines $l_1$, $l_2$, $l_3$ and then the claim holds for every terminal of $S_1$. 
By Observation~\ref{Obs:local}~(iii)b if all segments of $S_i$ are parallel to one of lines $l_1$, $l_2$, $l_3$, then the same holds to every adjacent $S_j$. We are done, because $\St'$ is connected.
\end{proof}

The situation in the case of several non-full solutions remains open.

\subsection{An example of a maximal distance minimizer with an infinite number of corner points}

Recall that by Theorem~\ref{theoGT} in the planar case a maximal distance minimizer is a finite union of simple curves. It however remained unclear if these curves can be chosen to be $C^1$-smooth.
Note that the existence of a tangent line at each point of each curve would imply the $C^1$ continuity of the latter, because Theorem~\ref{theoGT} guarantees the continuity of one-sided tangents.

In the theorem below we show that the answer to the question above is negative: we will provide an explicit example of the maximal distance minimizer having a limit point of the corner points (points with two tangent rays which do not belong to the same line).

Fix positive reals $r$, $R$ and let $N$ be a large enough integer. Consider a sequence of points $\{a_i\}_{i=1}^\infty$ chosen from the circumference $\partial B_R(o)$ such that
$N \cdot |a_2a_1|=r$, 
\[
|a_{i+1}a_{i+2}| = \frac{1}{2}|a_ia_{i+1}|
\]
and $\angle a_ia_{i+1}a_{i+2} > \frac{\pi}{2}$ for every $i \in \mathbb{N}$ (see Fig.~\ref{Fig:cornerexample}). Let $a_\infty$ be the limit point of $\{a_i\}$. Finally, let $a_{\infty+1}$ be the point in the tangent line to $B_r(o)$ at $a_\infty$, such that 
\[
|a_\infty a_{\infty+1}| = r/N.
\]

We claim that polyline 
\[
\Sigma = \bigcup_{i=1}^{\infty} a_ia_{i+1}
\]
is a unique maximal distance minimizer for the following $M$.

Let $v_1 \in (a_1a_2]$ be such point that $|v_1a_1| = r$.
For $i\in \mathbb{N}\cup\{\infty\} \setminus \{1\}$ define $v_i$ as the point satisfying $|v_ia_i|=r$ and $\angle a_{i-1}a_iv_{i}=\angle a_{i+1}a_iv_{i}>\pi/2$. Define $v_\infty$ as the limit point of $\{v_i\}$.
Finally, let $v_{\infty + 1}$ be such point that $v_{\infty + 1}a_\infty \perp v_\infty a_\infty$ and $|v_{\infty+1} a_\infty| = r$. 
Clearly $M := \{v_i\}_{i=1}^{\infty+1}$ is a compact set.

\begin{figure}[h]
        \centering
            \hfill  
   \begin{tikzpicture}[scale=15]
    
    \coordinate(O) at (0,0);
    \coordinate(x1) at ({cos(2r)},{sin(2r)});
    \coordinate(x2) at ({cos(1.6r)},{sin(1.6r)});
    \coordinate(x3) at ({cos(1.4r)},{sin(1.4r)});
    \coordinate(x4) at ({cos(1.3r)},{sin(1.3r)});
    \coordinate(x5) at ({cos(1.25r)},{sin(1.25r)});
    \coordinate(x6) at ({cos(1.225r)},{sin(1.225r)});
    \coordinate(xN) at ({cos(1.15r)},{sin(1.15r)});

    \foreach \x in{1,2,...,6}{
    \coordinate(y\x) at ($(x\x)!-0.1!(O)$);
    }    
    \coordinate(yy) at ($(xN)!-0.1!(O)$);
    \coordinate(yN) at ($(xN)!-2.2!(x6)$);
    \coordinate(xNN) at ($(xN)!-0.8!(x6)$);
    \coordinate(y0) at ($(x1)!-0.28!(x2)$);

    \draw [shift={(O)}]  plot[domain=1:2.141,variable=\t]({cos(\t r)},{sin(\t r)});
    
    \foreach \x in{2,3,...,6}{
    \draw[dotted] (x\x) -- (y\x);
    }
    \draw[dotted] (x1) -- (y0);
    \draw[dotted] (xN) -- (yN);
    \draw[dotted] (xN) -- (yy);

    \foreach \x in{2,3,...,5}{
            \draw (y\x) node[above]{$v_\x$};
            \edef\y{\x}
            \pgfmathparse{\y+1}
            \edef\y{\pgfmathresult}
        }

    \foreach \x in{2,3,...,5}{
            \draw [dotted, shift={(y\x)}]  plot[domain={3.4-\x*0.1}:{6.3-\x*0.05},variable=\t]({cos(\t r)*0.1},{sin(\t r)*0.1});
        }

    \foreach \x in{1,2,...,5}{
            \draw (x\x) node[above]{$a_\x$};
            \edef\y{\x}
            \pgfmathparse{\y+1}
            \edef\y{\pgfmathresult}
            \draw [ultra thick, blue] (x\x) -- (x\y);
        }

    \draw [ultra thick, blue] (xN) -- (xNN);

    \draw (x6) node[above right]{$a_6$};
    \draw (xN) node[below]{$a_{\infty}$};
    \draw (y6) node[above right]{$v_6$};
    \draw (y0) node[above right]{$v_1$};
    \draw (yy) node[above right]{$v_\infty$};
    \draw (yN) node[above right]{$v_{\infty+1}$};
    \draw (xNN) node[above right]{$a_{\infty+1}$};
    
    \draw [line width=2pt, dash dot, blue] (x6) -- (xN);
    \draw [line width=2pt, dash dot] (y6) -- (yy);

    \fill [blue] (xN) circle (0.2pt);
    \fill[blue] (xNN) circle (0.2pt);
    
    \foreach \x in{1,2,...,6}{
        \fill [blue] (x\x) circle (0.2pt);
    }
    \foreach \x in{2,...,6}{
        \fill (y\x) circle (0.2pt);
    }
    \fill (y0) circle (0.2pt);
    \fill (yN) circle (0.2pt);
    \fill (yy) circle (0.2pt);
    
\end{tikzpicture}
    \hfill \ 
        \caption{The example of a minimizer with infinite number of corner points}
        \label{Fig:cornerexample}
\end{figure}
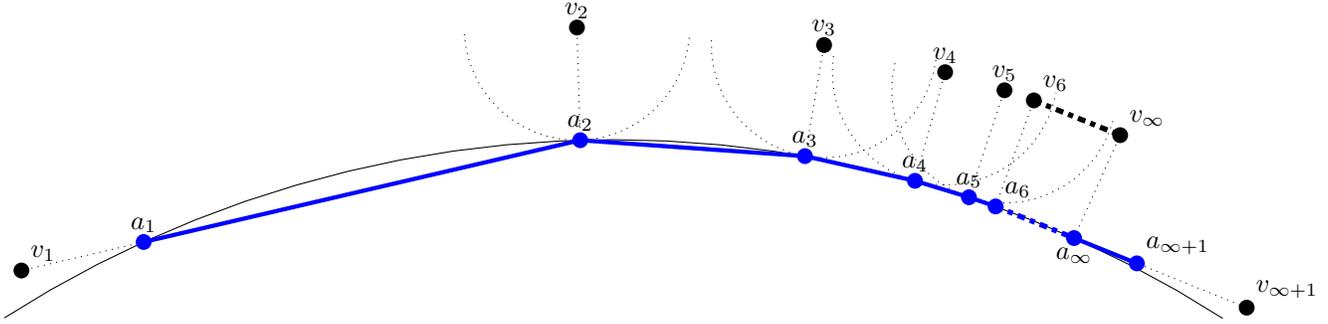

\begin{theorem}
Let $\Sigma$ and $M$ be as defined above. Then $\Sigma$ is the unique maximal distance minimizer for $M$.
\label{theocornerpoints}
\end{theorem}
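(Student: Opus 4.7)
The plan is to verify $\Sigma$ is an admissible tight cover of $M$, and then use the local structure of maximal distance minimizers (Observation~\ref{Obs:local}, the basic properties, and Theorem~\ref{theoGT}) to force any competitor $\Sigma'$ to coincide with $\Sigma$.

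For admissibility, by construction $|v_i a_i| = r$ for every $i \in \mathbb{N} \cup \{\infty, \infty+1\}$, so $M \subset \overline{B_r(\Sigma)}$. The obtuse angle conditions $\angle a_{i-1} a_i v_i > \pi/2$ ensure that both segments of $\Sigma$ adjacent to $a_i$ exit $B_r(v_i)$ immediately at $a_i$. The exponentially decaying spacing $|a_i a_{i+1}| = r/(N\cdot 2^{i-1})$ together with the clustering of the $a_j$ on the arc $\partial B_R(o)$ allows a direct geometric verification that $B_r(v_i) \cap \Sigma = \emptyset$ globally and that $a_i$ is the unique point of $\Sigma$ on $\partial B_r(v_i)$. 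Consequently, each $a_i$ is energetic in $\Sigma$ with corresponding point $v_i$ in the sense of basic property~(b), and the total length is $\H(\Sigma) = \sum_{i\geq 1} r/(N\cdot 2^{i-1}) + r/N = 3r/N$.

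Now let $\Sigma'$ be any $r$-minimizer for $M$. For each finite $i$ I would first argue that $v_i$ lies at exact distance $r$ from $\Sigma'$ and is the corresponding point of a unique energetic $b_i \in \Sigma'$: otherwise a slack near $v_i$ could be exploited to produce a strictly shorter connected set still covering $M$, contradicting minimality. Observation~\ref{Obs:local}(iii) then pins down the local structure: $b_1$ and $b_{\infty+1}$ are degree-$1$ endpoints of $\Sigma'$ with their unique adjacent edge extending along $[v_i b_i)$, while each interior $b_i$ with $2 \leq i < \infty$ has degree $2$ and its two adjacent edges meet along the bisector prescribed by $[v_i b_i)$.

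I would then propagate these constraints iteratively to force $b_i = a_i$. Starting from $b_1$: the direction of the exit ray $[v_1 b_1)$, combined with $b_1 \in \partial B_r(v_1)$ and the necessity of reaching $b_2 \in \overline{B_r(v_2)}$ with minimal length, forces $b_1 = a_1$. With $b_1 = a_1$ fixed, the bisector condition at $b_2$ together with $b_2 \in \partial B_r(v_2)$ and the connecting segment from $b_1$ forces $b_2 = a_2$; inductively $b_i = a_i$ for every finite $i$. The exponential contraction of $|a_i a_{i+1}|$ makes this rigidity especially tight: any deviation at some $b_i$ amplifies through the remainder of the chain into a length excess, contradicting $\H(\Sigma') \leq \H(\Sigma) = 3r/N$. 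The main obstacle is the passage to the accumulation point $a_\infty$: the iterative argument only determines $b_i = a_i$ for finite $i$, and at $a_\infty$ itself the machinery of isolated energetic points does not apply directly. For the tail, one uses closedness of $\Sigma'$ (so that $a_\infty \in \Sigma'$ as the accumulation of the $b_i$) together with Theorem~\ref{theoGT}, which guarantees a well-defined one-sided tangent to $\Sigma'$ at $a_\infty$; the constraint imposed by $v_{\infty+1}$, situated at distance $r$ perpendicular to this tangent, then forces the final edge of $\Sigma'$ to be $[a_\infty a_{\infty+1}]$, completing $\Sigma' = \Sigma$.
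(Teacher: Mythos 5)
Your proposal has genuine gaps at exactly the places where the paper has to work hardest. First, you never exclude branching points in a competitor $\Sigma'$: you invoke Observation~\ref{Obs:local}(iii) to declare $b_1,b_{\infty+1}$ of degree $1$ and all interior $b_i$ of degree $2$, but case (c) of that observation (a regular tripod) is a priori possible, and ruling it out is the content of the paper's Step~1 (a cone argument showing any branch point in the thin neighbourhood of $\Sigma$ would generate an infinite chain of branch points, contradicting Theorem~\ref{theoGT}); also note Observation~\ref{Obs:local} is stated for finite $M$, while here $M$ is infinite. Second, your claim that minimality forces \emph{every} $v_i$ to lie at distance exactly $r$ from $\Sigma'$ with a unique energetic $b_i$ is unjustified: ``slack near $v_i$'' cannot simply be ``exploited'' to shorten a connected competitor, and in general a minimizer does cover some points of $M$ with strict inequality. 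The paper avoids this by working only with \emph{important} indices ($B_r(v_i)\cap\Sigma'=\varnothing$), proving in Step~2 that these occur in the natural order along the (now simple) curve and that $\Sigma'$ is a polygonal chain through the corresponding points, with the remaining $v_l$ handled by projection onto a segment $[s_is_j]$.

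Third, the core of your argument --- the inductive ``propagation'' $b_1=a_1\Rightarrow b_2=a_2\Rightarrow\cdots$ and the claim that any deviation amplifies into a length excess --- is asserted rather than proved; the local conditions (radial exit at $b_1$, bisector condition at $b_i$) do not by themselves pin down the points. The paper replaces this by a convexity argument (Step~3): the length $L(U)=\sum|u_iu_{i+1}|$ over choices $u_i\in\overline{B_r(v_i)}$ is convex, so local minima are global, and two distinct minima would produce, at the midpoint configuration, a corner strictly inside some $B_r(v_i)$, which can be shortened. Moreover you never verify that $\Sigma$ itself attains the minimum: computing $\H(\Sigma)=3r/N$ does not compare it with the infimum, and your scheme only works if combined with an existence statement and a genuinely complete rigidity argument; the paper instead proves optimality of $\Sigma$ by Step~4, comparing with the truncated problems $M_i$ and passing to the limit. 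Finally, your treatment of the tail misreads the construction: $v_{\infty+1}$ lies \emph{on} the tangent line at $a_\infty$ (perpendicular to the radius $v_\infty a_\infty$), not perpendicular to the tangent; with your reading the point $v_{\infty+1}$ would already be covered by $a_\infty$ and no final edge would be forced at all, so your concluding step does not establish that the last segment of $\Sigma'$ is $[a_\infty a_{\infty+1}]$.
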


\section{Proof of Lemma~\ref{keylemma}}

\begin{proof}
Assume the contrary. Let $n$ be the smallest possible; it implies that there is no terminal of degree 3. 
Let $k$ denote the number of Steiner points in $T$. Given $n+k$ points $v_1,\dots, v_{n+k}\in \mathbb R^d$, denote by $\bar v := (v_1,v_2, \dots v_{n+k})$ the corresponding vector in $\mathbb R^{d(n+k)}$. Connecting $v_1,\dots, v_{n+k}$ by straight segments according with the topology $T$ we get an immersion of $T$ into $\mathbb R^d$, calling such an immersion a \textit{network}. Denote by $L(\bar v)$ the sum of the total length of this network. It is widely known that $L$ is a convex function on $\mathbb{R}^{d(n+k)}$.

Now consider the set of local minima of $L$ over the set of $\bar v$ such that $v_i \in \mathcal{A}_i$ for $1 \leq i \leq n$.
Note that every local minimum the corresponding network should satisfy items~(ii) and~(iii) of Observation~\ref{Obs:local} since it is based on local arguments, i.e. does not require that $T$ is properly embedded.

Consider two local minima and let $S_0$ and $S_1$ be the corresponding networks. Let $\{a_i\}$ and $\{b_i\}$ denote the sets of terminals of $S_0$ and $S_1$ respectively enumerated such that $a_i, b_i \in \mathcal{A}_i$; denote the sets of Steiner points of $S_0$ and $S_1$ by $\{c_j\}$ and $\{d_j\}$.

Let us show that $S_0$ and $S_1$ have the same length. Put $e_i (\alpha) = \alpha \cdot a_i + (1-\alpha) \cdot b_i$ and 
$f_j (\alpha) = \alpha \cdot c_j + (1-\alpha) \cdot d_j$ for $\alpha \in [0,1]$.
By the convexity of $\mathcal{A}_i$, a point $e_i(\alpha)$ belongs to $\mathcal{A}_i$ for every $\alpha$.
Hence, for every $\alpha$ the network $S_\alpha$ with the vertices $e_1 (\alpha),\dots,e_n(\alpha), f_1(\alpha),\dots, f_k(\alpha)$ connects $\mathcal{A}_i$ with each other. Now the convexity of $L$ and the fact that $S_0,S_1$ are corresponds to the locally minima of $L$ imply that all $S_\alpha$ have the same length.

Since each two local minima have the same length, each local minimum is a global minimum. In particular, both $S_0$ and $S_1$, and all other $S_\alpha$ are locally minimal trees.

Assume now that the $i$-th terminal of $T$ has degree 1. By the minimality of $S_\alpha$ and Observation~\ref{Obs:local}, the corresponding vertex $e_i(\alpha)$ cannot lie in the interior of $\mathcal{A}_i$; it follows that $a_i = b_i$, since $\mathcal{A}_i$ is strictly convex.

Suppose additionally that the $i$-th terminal is connected to another terminal, say, the $j$-th one. 
By Observation~\ref{Obs:local} the directions of $S_1$ and $S_2$ at $a_i$ coincide (recall that the boundary of $\mathcal{A}_i$ is $C^1$-smooth). Then the nearest point to $a_i = b_i$ in $S_0\cap \mathcal{A}_j$ coincides with the nearest point to $a_i = b_i$ in $S_1 \cap \mathcal{A}_j$; call this point $a_*$. One may now reduce the example by deleting $[a_ia_*[$ from $S_1$ and $S_2$, and replacing $\mathcal{A}_i$ and $\mathcal{A}_j$ with $\mathcal{A} := \{a_*\}$. We get a contradiction with the minimality of $n$.

Now we may assume that $T$ has no adjacent terminals. It follows that $T$ contains a pair of terminals adjacent to the same Steiner point and both having the degree 1. To see this, note that the number of terminals is equal to the number of Steiner points plus 2, hence we can find at least two pairs of terminals such that in each pair the terminals are adjacent to a common Steiner point, because terminals are not connected to each other. Apply this argument to any full component of $T$ which intersects the rest of $T$ by a single vertex. Then at least one of the corresponding two pairs has both terminals of degree 1 in $T$. 

Fix such a pair of terminals, say, the $i$-th and $j$-th one. Let $t$ be the index of the Steiner point they are adjacent to. Note that both $e_i(\alpha)$ and $e_j(\alpha)$ do not depend on $\alpha$ since $a_i,a_j$ are of degree 1. In its turn, the point $f_t(\alpha)$ must belong to the intersection of the straight segment connecting $c_t$ and $d_t$, and the locus of points $p$ such that the angle $a_ipa_j = 2\pi/3$. Such an intersection may contain at most two points, thus all $f_t(\alpha)$ stay the same and $c_t = d_t$. Hence, we can again reduce the example by replacing $\mathcal{A}_i$ and $\mathcal{A}_j$ with $\mathcal{A} = \{c_t\}$ and deleting $a_ic_t$ and $a_jd_t$ from $S_0$ and $S_1$. This again contradicts with the choice of $n$.

\end{proof}

The following example (see Figure~\ref{Fig:3stadiona}) shows that strict convexity is essential.
Let $\mathcal{A}_1$, $\mathcal{A}_2$ and $\mathcal{A}_3$ be stadiums whose sides are parts of an equilateral triangle $\mathcal{B}$. It is well-known (see barycentric coordinate system) that the length of every blue tripod orthogonal to $\mathcal{B}$ is the same and coincides with the length of every dashed polyline, orthogonal to $\mathcal{B}$. It is straightforward to see that all of them are local and global minimizers.

\begin{figure}[h]
        \centering
            \begin{tikzpicture}[scale=1.5]
    
    \draw[ultra thick] (1,-1)  arc(90:-90:.5) -- (-1, -2) arc(270:90:.5)  -- cycle;
    \draw[ultra thick, rotate=120] (1,-1)  arc(90:-90:.5) -- (-1, -2) arc(270:90:.5)  -- cycle;
    \draw[ultra thick, rotate=240] (1,-1)  arc(90:-90:.5) -- (-1, -2) arc(270:90:.5)  -- cycle;
    \draw[dashed] 
        (-1.72, -1) node[above=8pt, right=7pt]{$\pi/3$} -- 
        (1.72, -1) node[above=8pt, left=7pt]{$\pi/3$} --
        (0, 2) node[below=10pt]{$\pi/3$} -- cycle;

    \draw[ultra thick,blue] (-.2, -.3) -- (-.2, -1);
    \draw[ultra thick,blue,rotate=120] (-.16, .32) -- (-.16, -1);
    \draw[ultra thick,blue,rotate=240] (.36, -.02) -- (.36, -1);
    \draw[ultra thick,blue,dashed,rotate=120] (-.966, 0.33) -- (-.966, -1);
    \draw[ultra thick,blue,dashed,rotate=240] (.766, 0.67) -- (.766, -1);

    \draw (0.9,1) node[above right]{$\mathcal{A}_1$};
    \draw (-1.2,1) node[above right]{$\mathcal{A}_3$};
    \draw (0,-1.6) node[above right]{$\mathcal{A}_2$};

    \end{tikzpicture}
    
        \caption{The condition of strict convexity is sufficient in Lemma~\ref{keylemma}}
        \label{Fig:3stadiona}
\end{figure}
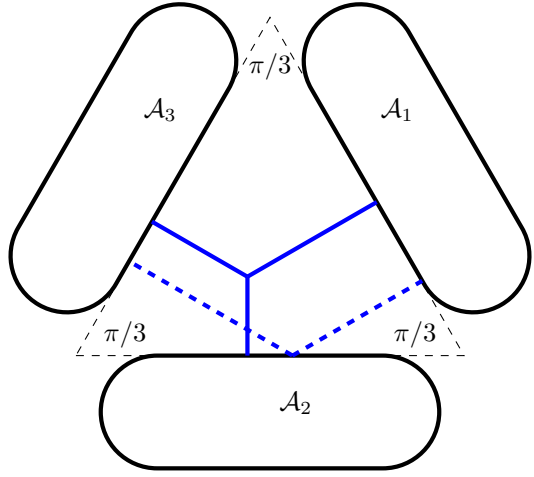

\section{Proof of Theorem~\ref{theocornerpoints}}

\begin{proof}
Let $\xi :=\H(\Sigma)$. Then $\xi$ tends to 0 and $|v_1v_{\infty+1}| = 2r + \xi + o(\xi)$ provided by $N$ tends to infinity. 
Assume that $\Sigma_1 \neq \Sigma$ is a minimizer for $\{v_i\}_{i=1}^{\infty+1}$. Put $J := \mathbb{N} \cup \{\infty,\infty+1\}$.

\paragraph{Step 1: $\Sigma_1$ has no branching points.}
 By the definition $\H(\Sigma_1)\leq  \xi$ and hence
 \[
 \Sigma_1 \subset \overline{B_{r+\xi}(v_1) \cap B_{r+\xi}(v_{\infty +1})}\subset\overline{B_{\sqrt{r\xi}+o(\sqrt{\xi})}(\Sigma)}.
 \]
 The last inclusion holds as 
 \[
 \Sigma_1 \subset B_{o(\xi)}([v_1v_{\infty+1}])\setminus B_r(v_1) \setminus B_r(v_{\infty+1}))\]
 and thus, due to the Pythagorean theorem, 
 \[
 \diam (B_{r+\xi}(v_1) \cap B_{r+\xi}(v_{\infty+1}))=\sqrt{(r+\xi)^2 - \left (r+\frac{\xi}{2} \right)^2}+o(\xi)=\sqrt{r\xi}+o(\sqrt{\xi}).
 \]

But for every point $q\in \overline{B_{\sqrt{r\xi}+o(\xi)}(\Sigma)}$ there exists a cone $Q(q)$ with vertex $q$ and angle $\pi-o_\xi(1)$ such that for every ray $[qp)$ of this cone there holds $\angle v_kqp >\pi/2$ for every $k \in  (\mathbb{N}\setminus \{1\}) \cup \infty $. 

Let $N$ be such that the angle of every $Q(q)$ is greater than $2\pi/3$. 
Let us show $q$ can not be a branching point of $\Sigma_1$. Assume the contrary and let $q$ be a branching point. Then at least one of three segments incident to $q$ belongs to $Q(q)$. 
This segment should end by a branching point: assume the contrary and let $[qx] \subset Q(q)\cap \Sigma_1$ where $x$ is an energetic point. Then by the basic property~(b) there exists a number $l \in J\setminus\{1,\infty+1\}$ (if $l = 1$ or $l = \infty + 1$, then the cone contains two segments of $\Sigma_1$ and we may choose another one) such that $B_r(v_l) \cap \Sigma_1 = \emptyset$ and $|xv_l|=r$ thus $\angle v_lxq \geq \pi/2$. On the other hand by the definition of the cone an angle $\angle v_kqx> \pi/2$. So the sum of angles in the triangle $xv_lq$ is greater than $\pi$; this contradiction shows that $x$ should be a branching point. But all the arguments work also for $x$ as $q$ is an arbitrary point, thus there exists an infinite sequence of branching points in $Q(q)$ which contradicts  Theorem~\ref{theoGT}.

\paragraph{Step 2. The order of circles is the direct.}
Now we know that $\Sigma_1$ is a simple curve. 
Let us show that $\Sigma_1$ contains such a sequence of different points $\{s_i\}_{i = 1}^{\infty }\cup \{s_\infty, s_{\infty+1}\}$ that for every $i \in J$ there holds $|v_is_i|\leq r $ and for every indices $i,j,k \in J$ such that $i<j<k$  the path in $\Sigma_1$ connecting $s_i$ and $s_k$ contains also $s_j$.

An index $i$ is called \textit{important} if $B_r(v_i) \cap \Sigma_1 = \emptyset$. This means that $\partial B_r(v_i) \cap \Sigma_1$ consists of a unique (energetic) point $s_i$. Clearly, $1,\infty+1$ are important and the neighbourhoods of the corresponding $s_1$, $s_{\infty+1}$ in $\Sigma_1$ are segments.
Let $I \subset J$ be the ordered subset of all important indices.
Consider $i,j,k \in I$ and let $i<j<k$.
For a large enough $N$ circles $B_r(v_i)$ and $B_r(v_k)$ intersect or $i = 1, k = \infty+1$. In the first case the path $P_{ik}$ in $\Sigma_1$ from $s_i$ to $s_k$ contains $s_j$: otherwise $P_{ik}$ would separate $s_j$ from the rest of $\Sigma_1$, see Fig.~\ref{Fig:cornerexample1}. In the second case the $\Sigma_1$ is the segment $[s_1s_{\infty+1}]$.
Thus the order of important circles is direct.

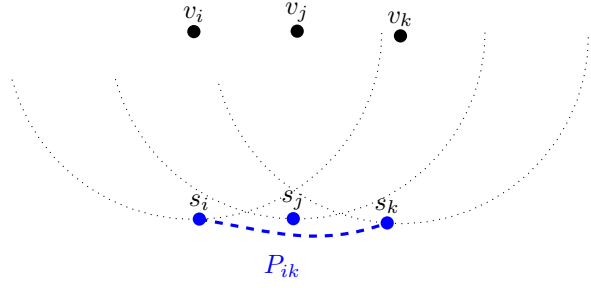
\begin{figure}[h]
        \centering
            \hfill  
   \begin{tikzpicture}[scale=25]
    
    \coordinate(O) at (0,0);
    \coordinate(xi) at ({cos(1.6r)},{sin(1.6r)});
    \coordinate(xj) at ({cos(1.55r)},{sin(1.55r)});
    \coordinate(xk) at ({cos(1.5r)},{sin(1.5r)});

    \foreach \x in{i,j,k}{
    \coordinate(y\x) at ($(x\x)!-0.1!(O)$);
    \draw [dotted, shift={(y\x)}]  plot[domain={3.4}:{6.3},variable=\t]({cos(\t r)*0.1},{sin(\t r)*0.1});
    \draw (y\x) node[above]{$v_\x$};
    \fill (y\x) circle (0.1pt);
    \fill [blue] (x\x) circle (0.1pt);
    \draw (x\x) node[above]{$s_\x$};
    }    

    \draw[very thick, blue, dashed] (xi) to [out=-10, in=-160] (xk);
    \draw[blue] (0.015,0.985) node[below] {$P_{ik}$};

\end{tikzpicture}
    \hfill \ 
        \caption{Order of important indices is natural}
        \label{Fig:cornerexample1}
\end{figure}

By basic properties~(b) and~(c) an arbitrary point $x$ of a minimizer $\Sigma_1$ is either the center of a segment contained in $\Sigma_1$ or the center of a tripod contained in $\Sigma_1$ or there is a point $y \in M$ such that $B_r(y) \cap \Sigma_1 = \emptyset$ and $|xy| = r$.

Thus $\Sigma_1$ is a union of segments connecting $s_i$ for consecutive indices from $I$. If $i$ and $j$ are such indices, then
 $\angle v_i s_i s_j\geq \pi/2$, $\angle v_js_j s_i \geq \pi/2$ and the angle between lines $(v_is_i)$, $(v_js_j)$ tends to $0$ when $\xi$ tends to $0$ as $s_i, s_j\in \overline{B_{\sqrt{r\xi}+o(\xi)}(\Sigma)}$.

Now consider index $l$ such that $i < l < j$, so $l\in J \setminus I$ and $[s_is_j] \subset \Sigma_1$. Clearly, as $\Sigma_1 \cap \overline{B_r(v_l)} \neq \emptyset$, $\Sigma_1 \cap B_r(v_i) = \emptyset= \Sigma_1 \cap B_r(v_j)$ then $\Sigma_1 \cap \overline{B_r(v_l)} \subset [s_is_j]$. One can take the projection of $v_l$ onto $[s_is_j]$ as $s_l$.

Therefore there exists a sequence of different points $\{s_i\}_{i \in J} \subset \Sigma_1$ that for every $i$ there holds $|v_is_i|\leq r $ and for every indices $i,j,k \in J$ such that $i < j < k$  the path in $\Sigma_1$ connecting $s_i$, $s_k$ contains also $s_j$.

\paragraph{Step 3. Uniqueness of a local minimum.} 
Let $U := \{u_i\}$, $i \in J$ be a set of points such that $u_i \in \overline{B_r(v_i)}$. 
Consider the length 
\[
L(U) := \sum_{i=1}^\infty |u_i u_{i+1}|.
\]
One can show that by convexity there is a unique minimum of $L$.

Analogously to the proof of Lemma~\ref{keylemma} the set of local minima of $L$ is convex and thus every local minimum is also a global minimum.
Suppose we have two local minima, $\Sigma_1$ and $\Sigma_2$ with $\{s_i\}$ and $\{t_i\}$, $i \in J$. 
In particular, $\H (\Sigma_1) = L(\{s_i\})$ and $\H(\Sigma_2) = L(\{t_i\})$, so $\H (\Sigma_1) = \H (\Sigma_2)$.

Let $u_i(\alpha) = \alpha s_i + (1-\alpha) t_i$, then $\Sigma(\alpha) := \bigcup [u_i(\alpha)u_{i+1}(\alpha)]$ is a global minimizer for every $\alpha \in [0,1]$. Assume that $s_i \neq t_i$ for some $i$ and choose the smallest such $i$.
If $i=1$, then $u_1(1/2)$ lies in $B_r(v_1)$ which is impossible in a minimizer.
So $i > 1$ and by Observation~\ref{Obs:local}(iii)(b) the segments $[s_{i-1}s_i]$ and $[t_{i-1}t_i]$ belong to the same line.
Since $s_i \neq t_i$ index $i$ is important in exactly one of $\Sigma_1$, $\Sigma_2$, without loss of generality in $\Sigma_1$.
Then $\angle s_{i-1}s_is_{i+1} < \pi$ and $\angle t_{i-1}t_it_{i+1} = \pi$.
Now consider $\Sigma (1/2)$ in a neighbourhood of $u_i(1/2)$. Then $u_i(1/2) \in B_r(v_i)$ and $\angle u_{i-1}(1/2)u_i(1/2)u_{i+1}(1/2) < \pi$, which is impossible since we can replace $[u_i(1/2)v] \cup [u_i(1/2)w] $ with $[vw]$ in the minimizer $\Sigma(1/2)$, where $\{v,w\} = \partial B_\varepsilon(u_i(1/2)) \cap \Sigma(1/2)$ and $\varepsilon$ is small enough. Contradiction.

\paragraph{Step 4. $\Sigma$ is the minimum.} 
Recall that $\Sigma$ is polychain with vertices $\{a_i\}_{i \in J}$.
Define $M_i$ as the set consisting of $v_1, \dots, v_i$ and a point $v^i_\infty$ which is the closest point to $v_{\infty+1}$ among the points $v$ such that $\angle a_{i-1}a_iv_i = \angle v_ia_iv$. 
Repeating steps 1--3 for $M_i$ we obtain that there is a unique minimum $\Sigma^i = \cup_{j=1}^{i-1} [a_ja_{j+1}] \cup [a_ia^i_\infty]$, where $a^i_\infty$ is the point in the segment $[a_iv^i_\infty]$ such that $|a^i_\infty v^i_\infty| = r$.

Suppose now that $\Sigma$ is not an $r$-minimizer for $M$, then its length is longer than the length of an $r$-minimizer $\Sigma'$ for $M$ by a positive $\varepsilon$. But for a large enough $i$ we have $|\H(\Sigma_i) - \H(\Sigma)| < \varepsilon/3$ and $|v^i_\infty v_{\infty+1}| < \varepsilon/3$.
It follows from the latter inequality that one can extend $\Sigma'$ by a segment of the length $\eps/3$ such that it will cover $M_i$. But the length of the extended $\Sigma'$ will remain to be strictly smaller length than $\Sigma_i$, which is an absurd.

\end{proof}

\section{Proof of Theorem~\ref{theorem:c11} and Corollaries~\ref{cor:length+curv},~\ref{cor:C11_r_small_enough}}
\label{sec:appendix}

\subsection{Proof of Theorem~\ref{theorem:c11}}

The goal of this section is to prove Theorem~\ref{theorem:c11}. We need the following auxiliary lemmas.

\begin{lemma}
    \label{lemma:crude_area_bound}
    For any curve $\gamma\subset \mathbb R^d$ connecting $x,y$ and any $R>\diam(\gamma)$ the following inequality holds:
    \[
        \HH^d(B_R(\gamma))\geq |x-y| \cdot \omega_{d-1} (R - \diam(\gamma))^{d-1}  + \omega_d R^d.
    \]
\end{lemma}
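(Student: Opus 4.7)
The plan is to exhibit an explicit set of known $d$-dimensional volume contained in $\overline{B_R(\gamma)}$, built from a central cylinder joining $x$ to $y$ and two hemispherical caps. Set up coordinates so that $x$ lies at the origin and $y = (|x-y|,\mathbf{0})$, and write points of $\mathbb R^d$ as $(t,z)$ with $t\in\mathbb R$ and $z\in\mathbb R^{d-1}$. Define
\begin{align*}
C &= \{(t,z) : 0 \leq t \leq |x-y|,\ |z| \leq R - \diam(\gamma)\},\\
H_- &= \{(t,z) : t \leq 0,\ t^2 + |z|^2 \leq R^2\},\\
H_+ &= \{(t,z) : t \geq |x-y|,\ (t-|x-y|)^2 + |z|^2 \leq R^2\}.
\end{align*}
The three pieces have pairwise disjoint interiors (separated by the hyperplanes $t=0$ and $t=|x-y|$), and a direct calculation gives
\[
\HH^d(C) + \HH^d(H_-) + \HH^d(H_+) = |x-y|\cdot\omega_{d-1}(R-\diam(\gamma))^{d-1} + \omega_d R^d,
\]
which is exactly the right-hand side of the claim.

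The remaining task is to verify that all three pieces lie in $\overline{B_R(\gamma)}$. For $H_\pm$ this is immediate since $x,y\in\gamma$ and every point of $H_\pm$ is within distance $R$ of the respective endpoint. For $C$ the argument is: given $p=(t,z)\in C$, apply the intermediate value theorem to the first coordinate of a continuous parametrization of $\gamma$ (which takes the value $0$ at $x$ and $|x-y|$ at $y$) to produce a point $q=(t,q')\in\gamma$ with the same first coordinate. Since $x\in\gamma$ lies at the origin, $|q'|\leq|q-x|\leq\diam(\gamma)$, hence
\[
|p-q| = |z-q'| \leq |z| + |q'| \leq (R-\diam(\gamma)) + \diam(\gamma) = R,
\]
with strict inequality whenever $|z| < R-\diam(\gamma)$, i.e.\ on the interior of $C$.

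Consequently $\mathrm{int}(C)\cup\mathrm{int}(H_-)\cup\mathrm{int}(H_+)$ is an open subset of $B_R(\gamma)$ whose three components have disjoint interiors summing to the claimed volume, proving the lemma. I do not expect any genuine obstacle: the only point requiring care is to ensure the inequality used in the cylinder step is strict on the open interior (so the piece sits inside the open neighborhood rather than just its closure), but this is handled automatically by the condition $|z|<R-\diam(\gamma)$ on the interior.
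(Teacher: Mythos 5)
Your proof is correct and follows essentially the same route as the paper: both decompose a subset of the $R$-neighbourhood into the cylinder of radius $R-\diam(\gamma)$ over the segment $[xy]$ plus the two half-balls at the endpoints, and sum their volumes. The only difference is that you spell out the intermediate-value-theorem verification that the cylinder lies in the neighbourhood, which the paper leaves implicit.
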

\begin{proof}
    Without loss of generality we can assume that $y = (0,\dots,0)$ and $x = (t,0,\dots,0)$ for some $t>0$. In this case the lemma follows from the fact that
    \begin{multline*}
        B_R(\gamma)\supset \left \{ (z_1,\dots,z_d)\in \mathbb R^d\ \mid\ 0< z_1<t,\ \left(\sum_{i = 2}^d z_i^2\right)^{1/2}< R - \diam(\gamma)  \right \} \sqcup \\
        \sqcup \{ (z_1,\dots,z_d)\in B_R(y)\ \mid\ z_1\leq 0 \} \sqcup \{ (z_1,\dots,z_d)\in B_R(x)\ \mid\ z_1\geq t \}.
    \end{multline*}
\end{proof}

\begin{lemma}
    \label{lemma:curve_in_cone}
    Let $\gamma\subset \mathbb R^d$ be a rectifiable curve and let $\gamma:[0,\H(\gamma)]\to \mathbb R^d$ be also its arc length parametrization, abusing the notation. Given $t\in (0,\H(\gamma))$ and $R>0$, consider
    \[
        Z_t = \{p\in B_R(\gamma)\ \mid\ \dist(p,\gamma) = |p - \gamma(t)|\}.
    \]
    Assume that we have $\HH^d(B_R(\gamma) ) = \H (\gamma) \omega_{d-1}R^{d-1} + \omega_d R^d$. Then $Z_t$ is contained in some affine hyperplane in $\mathbb{R}^d$.
\end{lemma}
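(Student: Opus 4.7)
The plan is: (i) localize the equality hypothesis to arbitrary subarcs by a splitting argument, extracting a rigid intersection identity; (ii) use that identity to show $\gamma$ admits a well-defined two-sided tangent at $\gamma(t)$; (iii) apply a first-order expansion of the defining inequality to force $Z_t$ into the hyperplane perpendicular to this tangent.

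First, I would split $\gamma$ at any $s\in(0,\H(\gamma))$ as $\gamma|_{[0,s]}\cup\gamma|_{[s,\H(\gamma)]}$. Since $B_R(\gamma(s))\subset B_R(\gamma|_{[0,s]})\cap B_R(\gamma|_{[s,\H(\gamma)]})$, inclusion--exclusion combined with Lemma~\ref{lemma:area_inequality} applied to each piece yields
\[
\HH^d(B_R(\gamma))\le \HH^d(B_R(\gamma|_{[0,s]}))+\HH^d(B_R(\gamma|_{[s,\H(\gamma)]}))-\omega_d R^d\le \H(\gamma)\omega_{d-1}R^{d-1}+\omega_d R^d.
\]
The hypothesized equality forces every inequality to be tight; the intersection then has Lebesgue measure $\omega_d R^d$ and, being an open set containing the open ball $B_R(\gamma(s))$ of the same measure, coincides with it. Thus $B_R(\gamma|_{[0,s]})\cap B_R(\gamma|_{[s,\H(\gamma)]})=B_R(\gamma(s))$; iterating, the analogous identity holds for every split $\gamma|_{[a,s]}\cup \gamma|_{[s,b]}$, and Lemma~\ref{lemma:area_inequality} is saturated on every subarc.

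Next, I would use this identity to rule out ``corners'' of $\gamma$ at $\gamma(t)$. Suppose $\gamma$ did not admit a well-defined two-sided unit tangent at $\gamma(t)$: either the one-sided tangent limits exist but disagree (a corner), or a one-sided limit fails to exist. In either case, for sufficiently small $\epsilon>0$ one can construct, in the $2$-plane spanned by the offending candidate tangent direction(s), a point $p=\gamma(t)+(R+\sigma)\xi$ (for a unit vector $\xi$ roughly opposite to both arm directions and small $\sigma>0$) that sits slightly outside $\overline{B_R(\gamma(t))}$ yet is still within distance $R$ of both arms $\gamma|_{[t-\epsilon,t]}$ and $\gamma|_{[t,t+\epsilon]}$. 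Such a $p$ lies in $B_R(\gamma|_{[t-\epsilon,t]})\cap B_R(\gamma|_{[t,t+\epsilon]})\setminus B_R(\gamma(t))$, contradicting the intersection identity. Hence $\gamma$ has a two-sided unit tangent $\tau=\tau(t)$ at $\gamma(t)$.

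Finally, for $p\in Z_t$ I set $v=p-\gamma(t)$ and expand the inequality $|p-\gamma(s)|^2\ge |p-\gamma(t)|^2$ as $\langle v,\gamma(t)-\gamma(s)\rangle\ge -\tfrac{1}{2}|\gamma(t)-\gamma(s)|^2$. Dividing by $|s-t|$, using the tangent condition $(\gamma(s)-\gamma(t))/(s-t)\to \tau$ together with the 1-Lipschitz estimate $|\gamma(s)-\gamma(t)|\le |s-t|$, letting $s\to t^+$ gives $\langle v,\tau\rangle\le 0$ and letting $s\to t^-$ gives $\langle v,\tau\rangle\ge 0$. Hence $v\perp\tau$ and $Z_t\subset \gamma(t)+\tau^{\perp}$, the required affine hyperplane.

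The main obstacle is Step (ii): while the intuition that a corner or tangential oscillation of $\gamma$ at $\gamma(t)$ produces $R$-tube overlap spilling outside $B_R(\gamma(t))$ is clear, the construction of the bad point $p$ has to be executed carefully---choosing its direction and its distance from $\gamma(t)$, verifying its distances to both arms uniformly in $\epsilon$, and handling the non-existence of a one-sided limit alongside the pure corner case. A useful preliminary observation that may streamline this step is that $Z_t$ is convex: averaging $|p_i-\gamma(s)|^2\ge |p_i-\gamma(t)|^2$ over $p_1,p_2\in Z_t$ gives the same inequality for $\tfrac{1}{2}(p_1+p_2)$, which already constrains the shape of $Z_t$ considerably.
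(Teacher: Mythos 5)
Your Step (i) is correct and is in fact a nice alternative derivation of the paper's Lemma~\ref{lemma:good1}: from the equality in (i), inclusion--exclusion plus Lemma~\ref{lemma:area_inequality} on the two halves forces $B_R(\gamma|_{[0,s]})\cap B_R(\gamma|_{[s,\H(\gamma)]})=B_R(\gamma(s))$ (an open set containing an open ball of the same measure must equal it), and this localizes to subarcs. Your Step (iii) computation is also fine \emph{once} a two-sided tangent is available. The genuine gap is Step (ii), which you yourself flag but do not carry out, and it is not merely a matter of care. First, the test point must be displaced from $\gamma(t)$ in a direction \emph{positively} correlated with the displacements $\gamma(s)-\gamma(t)$ along both arms (roughly a bisector of the two outgoing secant directions); a point ``roughly opposite to both arm directions'' moves away from both tubes and gives no contradiction. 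Second, even if this construction is executed correctly, what it yields is that the sets of accumulation \emph{secant directions} of the two arms at $\gamma(t)$ are a pair of antipodal singletons; for a merely rectifiable curve this does \emph{not} give existence of $\lim_{s\to t}(\gamma(s)-\gamma(t))/(s-t)$, since the chord-to-arclength ratio can degenerate at a point (spiral-like return behaviour), and then your Step (iii), which divides by $|s-t|$ and invokes $\tau$, does not launch. This particular defect is repairable --- divide instead by $|\gamma(s)-\gamma(t)|$ and run the argument with secant directions, which gives $\langle p-\gamma(t),u\rangle\le 0$ and $\langle p-\gamma(t),-u\rangle\le 0$ --- but that reformulation together with the quantitative construction of the bad point (choosing its offset $\sigma$ in terms of the chord lengths, uniformly for points on both arms) is exactly the substance that is missing.

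For comparison, the paper avoids any regularity claim about $\gamma$ at $t$: it assumes $Z_t$ contains points $p_1,\dots,p_d$ with $p_k-\gamma(t)$ linearly independent, observes that $\gamma$ avoids the $d$ balls $B_{r_k}(p_k)$ which meet transversally at $\gamma(t)$, and uses the biorthogonal system to the directions $v_k=(\gamma(t)-p_k)/|\gamma(t)-p_k|$ to pick a direction $w$ and a point $p=\gamma(t)+(R+\varepsilon)w$ lying within $R-\varepsilon$ of both halves of $\gamma$; then $B_R(\gamma_1)\cap B_R(\gamma_2)\supset B_R(\gamma(t))\sqcup B_\varepsilon(p)$ contradicts the equality via Lemma~\ref{lemma:area_inequality} applied to the two halves. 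So the hyperplane conclusion is reached directly, whereas your route tries to first extract tangent structure at $\gamma(t)$ (essentially a piece of the implication (i)$\Rightarrow$(iii) of Theorem~\ref{theorem:c11}), which is more than the lemma needs and is precisely where your argument is incomplete.
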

\begin{proof}
    Assume the contrary. Then there exist points $p_1,\dots, p_d\in Z_t$ such that the vectors $p_1-\gamma(t), \dots, p_d - \gamma(t)$ are linearly independent. By the definition $Z_t$ it means that there exist $R>r_1,\dots,r_d>0$ such that
    \begin{itemize}
        \item spheres $\partial B_{r_1}(p_1),\dots, \partial B_{r_d}(p_d)$ intersect transversally at $\gamma(t)$, and
        \item for any $k = 1,\dots,d$, $\gamma\cap B_{r_k}(p_k) = \varnothing$.
    \end{itemize}
    Put $\gamma_1 = \gamma([0,t])$ and $\gamma_2 = \gamma([t,\H(\gamma)])$. We will show that
    \begin{equation}
        \label{eq:cone1}
        \HH^d(B_R(\gamma_1)\cap B_R(\gamma_2)) > \omega_dR^d,
    \end{equation}
    which, by Lemma~\ref{lemma:area_inequality} applied to $\gamma_1$ and $\gamma_2$, implies $\HH^d(B_R(\gamma) ) < \H (\gamma) \omega_{d-1}R^{d-1} + \omega_d R^d$, whence the contradiction.

\begin{figure}[h]
    \centering
    \begin{tikzpicture}[scale=4]

        \coordinate (A) at (-2,0);
        \coordinate (AB) at (-1,0.4);
        \coordinate (B) at (0,0.6);
        \coordinate (BC) at (1,0.2);
        \coordinate (C) at (2,0.3);

\draw [blue, thick] plot [smooth, tension=0.4] coordinates { (A) (AB) (B)};
\draw [blue, thick] plot [smooth, tension=0.4] coordinates { (B) (BC) (C)};

\draw (B) --++ (281.5:1) coordinate (u1);
\draw (B) --++ (246.5:1) coordinate (u2);
\draw [ultra thick, ->] (B) --++ (281.5:.5) coordinate (v1);
\draw [ultra thick, ->] (B) --++ (246.5:.5) coordinate (v2);
\draw (u1) --++ (191.5:0.315) coordinate (x) -- (u2);

\draw[very thick,fill=green!30,yshift=0.6cm] (0,0) -- (66.5:1) arc(66.5:101.5:1) -- cycle;

\filldraw[line width=1pt, fill=cyan!40] (B) -- (u1) -- (x) -- (u2) -- (B);

\draw [ultra thick, ->] (B) --++ (191.5:.5) coordinate (w1);
\draw [ultra thick, ->] (B) --++ (336.5:.5) coordinate (w2);

\draw (B) --++ (66.5:0.7) coordinate (p1);
\draw (B) --++ (101.5:0.6) coordinate (p2);

\draw [dotted] (B) circle (1);
\node at (0.1, 1.2) {$Z(t)$};
\node [blue] at (0.2, 0.7) {$\gamma(t)$};
\node [above] at (w2) {$w_2$};
\node [above] at (w1) {$w_1$};
\node [left] at (v2) {$v_2$};
\node [right] at (v1) {$v_1$};
\node [right] at (p1) {$p_1$};
\node [left] at (p2) {$p_2$};
\node [left,blue] at (-1.3,.2) {$\gamma$};

\draw[very thick,fill=white,yshift=0.6cm] (0,0) -- (246.5:1) arc(246.5:281.5:1) -- cycle;

\node [left,red] at (0.1,-0.2) {$w$};

\draw [ultra thick, ->] (B) --++ (281.5:.5) coordinate (v1);
\draw [ultra thick, ->] (B) --++ (246.5:.5) coordinate (v2);

\draw [ultra thick, ->, red] (B) --++ (266.5:1.02) coordinate (w);

\fill (p1) circle (0.5pt);
\fill (p2) circle (0.5pt);
\fill [blue] (B) circle (0.5pt);



\end{tikzpicture}
    \caption{Choice of $w$ in Lemma~\ref{lemma:curve_in_cone}.}
    \label{pic:6choosingw}
\end{figure}
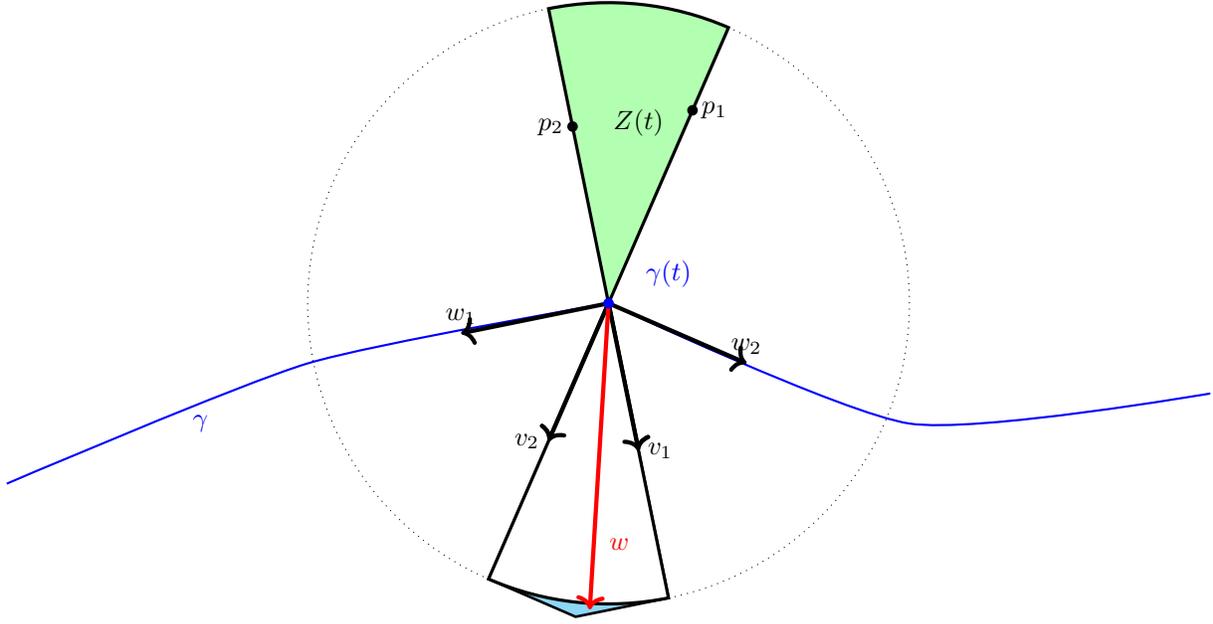

    For $k = 1,\dots, d$, define $v_k = \frac{\gamma(t) - p_k}{|\gamma(t) - p_k|}$ and let $w_1,\dots, w_d$ be the biorthogonal system. Set $w = \frac{w_1+\ldots+w_d}{|w_1 + \ldots+w_d|}$ and $p = \gamma(t) + (R+\eps)w$ for some $\eps>0$. Now, one can check that if $\eps$ is small enough, then for some small $\delta_1,\delta_2>0$ depending on $\eps$ we have $|p -\gamma(t+\delta_1)|<R-\eps$ and $|p - \gamma(t-\delta_2)| < R-\eps$. It follows that
    \[
        B_R(\gamma_1)\cap B_R(\gamma_2) \supset B_R(\gamma(t))\sqcup B_\eps(p)
    \]
    and~\eqref{eq:cone1} follows.
\end{proof}

\begin{lemma}
\label{lemma:good1}
    Let $\gamma\subset \mathbb R^d$ be a simple rectifiable curve parameterized by its arc length, and assume that for any $p\in B_R(\gamma)$ there is a unique closest point on $\gamma$. Then for any $t\in [0,\H(\gamma)]$ we have
    \[
        B_R(\gamma([0,t]))\cap B_R(\gamma([t,\H(\gamma)])) = B_R(\gamma(t)).
    \]
\end{lemma}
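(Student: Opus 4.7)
The inclusion $\supset$ is immediate: since $\gamma(t)$ belongs to both subcurves, any $p$ with $|p-\gamma(t)|<R$ lies in both $R$-neighborhoods. My plan is to attack $\subset$ by contradiction, assuming there exists $p \in B_R(\gamma([0,t])) \cap B_R(\gamma([t,\H(\gamma)]))$ with $|p - \gamma(t)| \geq R$.

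Set $g(s) = |p - \gamma(s)|$. The assumption on $p$ gives $u_1 \in [0, t]$ and $u_2 \in [t, \H(\gamma)]$ with $g(u_1), g(u_2) < R$; combined with $g(t) \geq R$, continuity of $g$ forces $u_1 < t < u_2$. Both $u_1$ and $u_2$ are local minima of $g$ on $[0,\H(\gamma)]$, while the unique-foot hypothesis applied to $p$ tells us the global minimizer $s^*$ is unique. If $g(u_1)=g(u_2)$ both would be global minimizers, so in fact the values differ and one of $u_1,u_2$ realizes $s^*$; say $s^*=u_1$ (the other case is symmetric).

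The key step is to find $\lambda^{*}\in[0,1]$ so that the unique foot of $p_{\lambda^*}:=(1-\lambda^*)p+\lambda^*\gamma(u_2)$ is precisely $\gamma(t)$. The segment $\{p_\lambda\}_{\lambda\in[0,1]}$ is contained in $B_R(\gamma(u_2))\subset B_R(\gamma)$, so every $p_\lambda$ has a well-defined unique foot $\gamma(s(\lambda))$. The uniqueness assumption together with compactness of $[0,\H(\gamma)]$ implies $\lambda\mapsto s(\lambda)$ is continuous (any subsequential limit of the feet is itself a foot, and must coincide with the unique one). At the endpoints we have $s(0)=s^*=u_1$ and, using simplicity of $\gamma$, $s(1)=u_2$. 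Since $u_1<t<u_2$, the intermediate value theorem provides the desired $\lambda^*$ with $s(\lambda^*)=t$.

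The triangle inequality then closes the argument:
\[
|p-\gamma(t)|\leq|p-p_{\lambda^{*}}|+|p_{\lambda^{*}}-\gamma(t)|=\lambda^{*}g(u_2)+\dist(p_{\lambda^{*}},\gamma)\leq \lambda^{*}g(u_2)+(1-\lambda^{*})g(u_2)=g(u_2)<R,
\]
contradicting $|p-\gamma(t)|\geq R$. The main obstacle is the continuity of $\lambda\mapsto s(\lambda)$: in general the nearest-point projection onto a curve can be multivalued and discontinuous, so here one must invoke the unique-foot hypothesis together with compactness to rule out jumps. Everything else is bookkeeping.
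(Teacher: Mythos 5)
Your proof is correct, but it follows a different mechanism than the paper's. The paper also argues by contradiction and also starts from the feet $\gamma(t_1)\in\gamma([0,t])$, $\gamma(t_2)\in\gamma([t,\H(\gamma)])$ of $p$, but then applies the intermediate value theorem to the scalar function $\dist(\cdot,\gamma([0,t]))-\dist(\cdot,\gamma([t,\H(\gamma)]))$ along the broken path $[p\gamma(t_1)]\cup[p\gamma(t_2)]$: the resulting equidistant point $q$ lies on the segment toward one foot, so its distance to that half is still realized at that foot, and hence $q$ has two distinct nearest points on $\gamma$ (using $t_1<t$ and simplicity), contradicting the uniqueness hypothesis directly. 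You instead run the intermediate value theorem on the foot parameter $\lambda\mapsto s(\lambda)$ along the single segment from $p$ to the foot on the half not containing the global foot, which requires first establishing continuity of the nearest-point projection; your uniqueness-plus-compactness argument for that is the standard one and is sound (the paper proves the same continuity only later, in the step $(i)\&(ii)\Rightarrow(iii)$ of Theorem~\ref{theorem:c11}), and you then contradict $|p-\gamma(t)|\geq R$ via the triangle inequality rather than contradicting uniqueness. The paper's route is a bit more economical, needing no projection map; yours has the small bonus of the quantitative conclusion $|p-\gamma(t)|\leq\max\bigl(\dist(p,\gamma([0,t])),\dist(p,\gamma([t,\H(\gamma)]))\bigr)$ for every $p\in B_R(\gamma)$. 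One writing nit: say explicitly that $u_1,u_2$ are chosen as minimizers of $g$ on $[0,t]$ and on $[t,\H(\gamma)]$; as written they are only introduced as points with $g(u_i)<R$, while your later claims (``both would be global minimizers'', $s^*\in\{u_1,u_2\}$) presuppose the minimizing choice.
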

\begin{proof}
    Put $\gamma_1 = \gamma([0,t])$ and $\gamma_2 = \gamma([t,\H(\gamma)])$. Assume that the lemma is false, then we can find $p\in B_R(\gamma)$ such that $|p-\gamma(t)|\geq R$ and $\dist(p, \gamma_i) < R$ for $i = 1,2$. Let $t_1\in [0,t)$ and $t_2\in (t,\H(\gamma)]$ be such that $\dist(p, \gamma_i) = |p - \gamma(t_i)|$, and let $l_i = [p\gamma(t_i)]$. Let $l = l_1\cup l_2$, then by the continuity of the length there is a point $q\in l$ such that $\dist(q,\gamma_1) = \dist(q,\gamma_2)$. Assume without loss of generality that $q\in l_1$. Then it is easy to see that $\dist(q,\gamma_1) = |q - \gamma(t_1)|$. Since $t_1\neq t$, it follows that $q$ has two closest points on $\gamma$, so we get a contradiction.
\end{proof}

\begin{lemma}
\label{lemma:goods}
    Let $\gamma\subset \mathbb R^d$ be a simple rectifiable curve parameterized by its arc length, and assume that for any $p\in B_R(\gamma)$ there is a unique closest point on $\gamma$. Then for any $t\in [0,\H(\gamma)]$ the curves $\gamma([0,t])$ and $\gamma([t,\H(\gamma)])$ have the same property.
\end{lemma}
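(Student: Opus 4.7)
I would argue by contradiction along the lines of the proof of Lemma~\ref{lemma:good1}. By symmetry (reversing the parametrization of $\gamma$) it suffices to treat $\gamma_1 := \gamma([0,t])$. Suppose that some $p \in B_R(\gamma_1)$ has two distinct closest points $q_1 = \gamma(r_1)$ and $q_2 = \gamma(r_2)$ in $\gamma_1$, labelled so that $r_1 < r_2 \leq t$, at common distance $m = \dist(p, \gamma_1) < R$. Since $p \in B_R(\gamma)$, the hypothesis supplies a unique closest point $\gamma(s) \in \gamma$ of $p$. If $s \leq t$ then $\gamma(s) \in \gamma_1$ is in particular a closest point of $\gamma_1$ to $p$, hence $\gamma(s) \in \{q_1, q_2\}$, say $\gamma(s) = q_1$; but then $q_2 \in \gamma$ shares the same distance, contradicting uniqueness on $\gamma$. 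So $s > t$ and $|p - \gamma(s)| < m$; this is the case I need to rule out.

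The plan is to construct an auxiliary point $q \in B_R(\gamma)$ with two distinct closest points on $\gamma$, contradicting the hypothesis. I would consider the function $\phi(x) := \dist(x, \gamma_1) - \dist(x, \gamma_2)$ on the polyline $L := [q_1, p] \cup [p, \gamma(s)]$. Because $r_1 < t$ we have $q_1 \notin \gamma_2$, so $\phi(q_1) = -\dist(q_1, \gamma_2) < 0$; on the other hand $\phi(p) = m - |p - \gamma(s)| > 0$. The intermediate value theorem applied on the segment $[q_1, p]$ then produces an interior point $q \in (q_1, p)$ with $\phi(q) = 0$, i.e.\ $\dist(q, \gamma_1) = \dist(q, \gamma_2) =: D$.

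To conclude, I would show $D = |q - q_1|$ by the same ball-avoiding trick used in the proof of Lemma~\ref{lemma:good1}: if some $y \in \gamma_1$ had $|q - y| < |q - q_1|$, the collinearity of $q_1, q, p$ would yield $|p - y| \leq |p - q| + |q - y| < |p - q| + |q - q_1| = |p - q_1| = m$, contradicting the minimality of $m$ on $\gamma_1$. Pick any closest point $\gamma(b) \in \gamma_2$ to $q$, so $b \geq t > r_1$, and by injectivity of $\gamma$ therefore $\gamma(b) \neq q_1$; thus $q_1$ and $\gamma(b)$ are two distinct points of $\gamma$ at common distance $D = \dist(q, \gamma)$ from $q$. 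Finally $q \in B_R(\gamma)$ because $|q - q_1| < m < R$, which gives the desired contradiction with the uniqueness hypothesis. The main obstacle in this plan is locating a point $q$ that is both equidistant to $\gamma_1, \gamma_2$ and for which the collinearity anchor at $q_1$ still forces the closest point on $\gamma_1$ to be $q_1$ itself; the role of the inequality $\phi(p) > 0$ is precisely to push the sign change of $\phi$ into the ``good'' segment $[q_1, p]$ where this anchoring works.
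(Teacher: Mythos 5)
Your proof is correct and follows essentially the same route as the paper's: rule out the case where the global closest point lies in $\gamma_1$, then use the intermediate value theorem on the segment $[q_1,p]$ to find a point $q$ equidistant from $\gamma_1$ and $\gamma_2$, and the collinearity/triangle-inequality anchoring to show $q$ has two distinct closest points on $\gamma$, contradicting the hypothesis. You merely spell out explicitly the anchoring step ($\dist(q,\gamma_1)=|q-q_1|$) that the paper asserts without detail.
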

\begin{proof}
    Assume that the statement of lemma is false for some $t\in [0,\H(\gamma)]$ and put $\gamma_1 = \gamma([0,t])$ and $\gamma_2 = \gamma([t,\H(\gamma)])$. By our assumption we have $p\in B_R(\gamma_1)$ (the case of $\gamma_2$ is treated verbatim) there exist two times $t_1<t_2\leq t$ such that $\dist(p,\gamma_1) = |p - \gamma(t_i)|, i = 1,2$. Note that $\dist(p,\gamma)< \dist(p,\gamma(t_1))$, otherwise we would have a contradiction, in particular $\dist(p,\gamma) = \dist(p,\gamma_2)$. Put $l_1 = [p\gamma(t_1)]$, there exists $q\in l_1$ such that $\dist(q,\gamma_1) = \dist(q,\gamma_2)$. Since $\dist(q,\gamma_1) = |p - \gamma(t_1)|$, we find out that there are two closest points on $\gamma$ for $q$, hence we get a contradiction.
\end{proof}

\begin{proof}[Proof of Theorem~\ref{theorem:c11}]
    $(i)\Rightarrow (ii)$. Assume that we can find a point $p\in B_R(\gamma)$ and $t_1<t_2\in [0,\H(\gamma)]$ such that 
    \[
        R' \coloneqq \dist(p,\gamma) = |p - \gamma(t_1)| = |p - \gamma(t_2)|.
    \]
    Note that $0<R'<R$. If $\gamma(t_1) = \gamma(t_2)$, then we take $t_1<t<t_2$ to be any time such that $\gamma(t)\neq \gamma(t_1)$ and set $l$ to be the line passing through $\gamma(t)$ and $\gamma(t_1)$. Otherwise, let $H\subset \mathbb R^d$ be the affine hyperplane passing though $p$ and perpendicular to the line $\gamma(t_1)\gamma(t_2)$, and let the time $t_1<t<t_2$ be such that $\gamma(t)\in H$. Let $L$ be the 2-dimensional plane passing through $\gamma(t_1),\gamma(t_2),\gamma(t)$ and $C = L\cap \partial B_{R'}(p)$. Note that $C$ is a circle of radius at most $R'$, $\gamma(t_1),\gamma(t_2)\in C$ and $\gamma(t)$ does not lie in the interior of $C$; we also have that the triangle $\gamma(t_1)\gamma(t)\gamma(t_2)$ is isosceles (see~Figure~\ref{pic:5planes}). Let $l = H\cap L$ and denote by $O$ the center of $C$.

    Now, for both choices of $l$ one can find a point $q\in l$ and an $\eps>0$ such that 
    \[
        |q-\gamma(t_1)| = |q-\gamma(t_2)| \leq R-\eps,\qquad |q - \gamma(t)|\geq R+\eps.
    \]
    Consider now two curves $\gamma_1 = \gamma([0,t])$ and $\gamma_2 = \gamma([t,\H(\gamma)])$. Note that
    \[
        B_R(\gamma(t))\sqcup B_\eps(q)\subset B_R(\gamma_1)\cap B_R(\gamma_2),
    \]
    hence
    \begin{equation*}
        \HH^d(B_R(\gamma))\leq \HH^d(B_R(\gamma_1)) + \HH^d(B_R(\gamma_2)) - \omega_d(R^d + \eps^d)\leq \H (\gamma) \omega_{d-1}R^{d-1} + \omega_d R^d - \omega_d \eps^d.
    \end{equation*}
    We obtain a contradiction.
    
\begin{figure}[h]
    \centering
    \begin{tikzpicture}[scale=3]
       \draw (0,0) circle (1);

        \coordinate (x) at (0.8,0.6);
        \coordinate (y) at (-0.8,0.6);
        \coordinate (z) at (0,1.3);
        \coordinate (t) at (-0.7,1.2);

        \coordinate (q) at (0,-0.3);

       \draw [blue, ultra thick] plot [smooth, tension=2] coordinates { (x) (z) (t) (y)};

 \draw[dotted] (q) circle (1.20416);

        \fill (x) circle (1pt) node[below left] {$\gamma(t_1)$};
        \fill (y) circle (1pt) node[below right] {$\gamma(t_2)$};
        
        \draw [red, dashed] (0,1.75) -- (0,-1.6);

        \fill (z) circle (1pt) node[above right] {$\gamma(t)$};
        \fill (q) circle (1pt) node[above right] {$q$};

        \node  at (0.7, 0.1) {$C$};
        \node [red] at (0.1, 0.7) {$H$};

\end{tikzpicture}
    \caption{Cutting curve $\gamma$ in two pieces whose $R$-neighbourhoods has too large intersection.}
    \label{pic:5planes}
\end{figure}
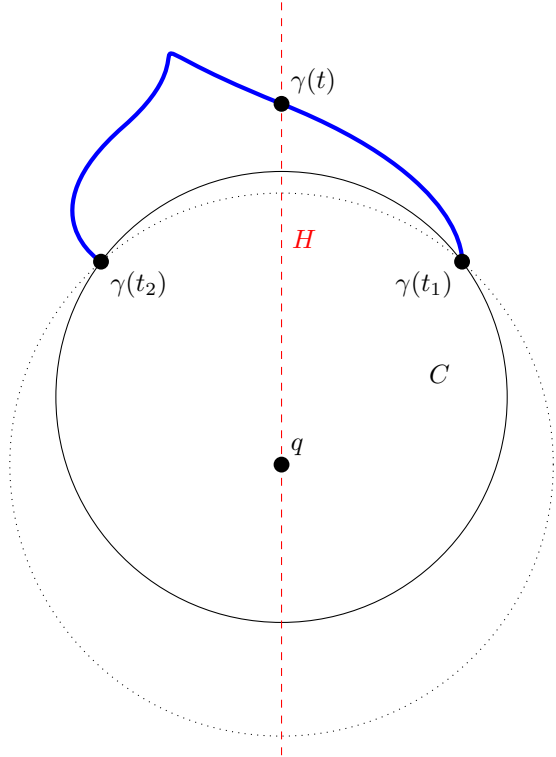

    $(ii)\Rightarrow (i)$. Let us fix some $R>0$ and name a curve $\gamma$ ``good'' if it satisfies $(ii)$ with this $R$. By Lemma~\ref{lemma:goods}, if $\gamma$ is good and $t\in (0,\H(\gamma))$, then $\gamma([0,t])$ and $\gamma([t,\H(\gamma)])$ are good. Fix a big number $n>0$ split $\gamma$ into $n$ subcurves of equal length.
    Applying Lemma~\ref{lemma:good1} and Lemma~\ref{lemma:crude_area_bound} to this splitting we obtain
    \[
        \HH^d(B_R(\gamma))\geq \sum_{k = 0}^{n-1} \Bigl |\gamma\Bigl(k\H(\gamma)/n\Bigr) - \gamma\Bigl((k+1)\H(\gamma)/n\Bigr)\Bigr|\cdot \omega_{d-1}\cdot (R - \H(\gamma)/n)^{d-1} + \omega_dR^d.
    \]
    Passing $n\to \infty$ we obtain the inequality
    \[
        \HH^d(B_R(\gamma))\geq \H(\gamma) \omega_{d-1}R^{d-1} + \omega_dR^d.
    \]
    The reversed inequality is given by Lemma~\ref{lemma:area_inequality}.

    $(i)\& (ii) \Rightarrow (iii).$ We need to prove that $\gamma$ has the curvature radius at least $R$. Thanks to $(ii)$ we can define the function $F:B_R(\gamma)\to \gamma$ by
    \[
        |p - F(p)| = \dist(p,\gamma).
    \]
    Note that $F$ is continuous; indeed, if $p_k\to p$, but $F(p_k)\not\to F(p)$, then $p$ will have two closest points on $\gamma$. Fix $t\in (0,\H(\gamma))$ and consider the set $Z_t = F^{-1}(\gamma(t))$. We claim that $B_R(\gamma(t))\smallsetminus Z_t$ is not connected. Indeed, $F(B_R(\gamma(t))\smallsetminus Z_t)$ is disconnected, since it contains $\gamma(t-\eps)$ and $\gamma(t+\eps)$ for some small $\eps>0$, we conclude with the fact that $F$ is continuous.

    From the fact that $B_R(\gamma(t))$ is connected but $B_R(\gamma(t))\smallsetminus Z_t$ is not we can deduce easily that $Z_t$ has Hausdorff dimension at least $d-1$. It follows that there exist points $p_1,p_2,\dots,p_{d-1}\in Z_t$ such that the vectors $p_1-\gamma(t),\dots,p_{d-1} - \gamma(t)$ are linearly independent. By Lemma~\ref{lemma:curve_in_cone} and $(i)$ this implies that for any $p_d\in Z_t$ we have
    \[
        p_d - \gamma(t)\in V \coloneqq \mathrm{span}\{ p_1 - \gamma(t),\dots, p_{d-1} - \gamma(t) \}.
    \]
    Using again $B_R(\gamma(t))\smallsetminus Z_t$ is not connected we conclude that $Z_t = V\cap B_R(\gamma(t))$.

    Introduce the notation
    \[
        \mathcal{C} = F^{-1}\Bigl(\gamma\Bigl((0,\H(\gamma))\Bigr)\Bigr).
    \]
    We construct a vector field $w$ on $\mathcal{C}$ as follows. Given $p\in \mathcal{C}$, let $\bar{w}(p)\in \mathbb RP^{d-1}$ be the direction orthogonal to $Z_t$, where $t$ is such that $\gamma(t) = F(p)$. We claim that $\bar{w}$ is continuous. Indeed, by the definition $Z_{t_1}\cap Z_{t_2} = \varnothing$; but if for a sequence $p_k\to p$ we do not have $\bar{w}(p_k)\to \bar{w}(p)$, then $Z_{t_k}\cap Z_t\neq \varnothing$ from a certain moment, where $\gamma(t_k) = F(p_k)$.
    
    Recall that $\partial B_1(0)\to \mathbb RP^{d-1} = \mathbb{R}^d/R$ is a covering map. Thus, by lifting property we can extend the map $\bar{w}$ to a map 
    \[
        w: \mathcal{C}\to \partial B_1(0),   
    \]
    such that $w$ is continuous. Let $\Phi_p(t)$ be the solution to the equation
    \[
        \Phi_p(0) = p,\qquad \frac{d}{dt} \Phi_p(t) = w(\Phi_p(t)).
    \]
    We claim that after replacing $w$ with $-w$ if necessary we have $\Phi_{\gamma(t)}(s) = \gamma(t+s)$ for any $t,s>0$ such that $t+s< \H(\gamma)$. Indeed, consider the function $h(x) = \dist(x,\gamma)$. One can show that for any $x\in \mathcal{C}\smallsetminus \gamma$ the function $h$ is differentiable and $\nabla h(x) = \frac{x - F(x)}{|x - F(x)|}$. It follows that $h(\Phi_p(t))$ is constant in $t$ for any $p\in \mathcal{C}\smallsetminus\gamma$, and therefore for any $p\in \mathcal{C}$ by continuity. We conclude that $\Phi_{\gamma(t)}(s)\in \gamma$ for small $s$. This and the fact that $|w| = 1$ imply the claim. 
    
    It follows that $\gamma\in C^1$ and $\gamma'(t) = w(\gamma(t))$ for any $t\in (0,\H(\gamma))$. The fact that $w(\gamma(t))$ is $R^{-1}$-Lipschitz easily follows from the fact that discs $Z_t$ are mutually disjoint, which finalizes the proof.

    Note that $(iii)\Rightarrow (ii)$, hence we proved the equivalence between $(i), (ii), (iii)$.
\end{proof}

\subsection{{Proof of Corollaries~\ref{cor:length+curv},~\ref{cor:C11_r_small_enough}}}

We begin by generalizing~\cite[Proposition~2.4]{tilli2010some} to $d$-dimensional space:

\begin{lemma}
    \label{lemma:tillis_lemmas}
    Let $\gamma$ be a rectifiable curve in $\mathbb R^d$ of length $l\leq \pi R$ and curvature radius at least $R$ for some $R>0$, let $\gamma:[0,l]\to \mathbb R^d$ be its arc length parametrization. Assume that $\gamma(0) = (R,0,\dots,0)$ and $\gamma'(0) = (0,\dots, 0,1)$. Then $\gamma\cap B_R(0) = \varnothing$, where $B_R(0)$ denotes the open ball.
\end{lemma}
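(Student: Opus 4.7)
The plan is to argue by contradiction using the auxiliary squared-distance function and reduce the analysis to a two-dimensional phase-portrait claim in the spirit of Tilli's planar argument.

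First I will set $F(t) \defeq |\gamma(t)|^2 - R^2$. The hypotheses $\gamma(0) = R e_1$ and $\gamma'(0) = e_d$ immediately give $F(0) = 0$ and $F'(0) = 2\gamma(0) \cdot \gamma'(0) = 0$. Since $\gamma'$ is $1/R$-Lipschitz, the second derivative $\gamma''$ exists almost everywhere with $|\gamma''| \leq 1/R$, so a straightforward computation yields
\[
    F''(t) \;=\; 2 + 2\gamma(t)\cdot\gamma''(t) \;\geq\; 2\bigl(1 - |\gamma(t)|/R\bigr) \quad \text{a.e.},
\]
and in particular $F$ is convex on every interval on which $|\gamma|\leq R$. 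Now suppose for contradiction that $|\gamma(t^*)| < R$ for some $t^* \in (0, l]$, and let $[a, b]$ be the maximal subinterval of $\{t \colon |\gamma(t)|\leq R\}$ containing $t^*$. The case $a = 0$ is easy: by convexity of $F$ on $[0, b]$ together with $F(0) = F'(0) = 0$, one has $F \geq 0$ on $[0, b]$, hence $F \equiv 0$ there, which contradicts $F(t^*) < 0$. Hence $a > 0$, so $|\gamma(a)| = R$ and the curve strictly enters $B_R(0)$ at time $a$, meaning $F'(a) = 2\gamma(a)\cdot\gamma'(a) \leq 0$.

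To dispose of the hard case $a > 0$, I will introduce the scalars $h(t) \defeq \gamma(t) \cdot \gamma'(t)$ and $\mu(t) \defeq |\gamma(t) - h(t)\gamma'(t)|$, which satisfy $h^2 + \mu^2 = |\gamma|^2$ and the initial data $(h, \mu)(0) = (0, R)$. Short computations give $\tfrac{d}{dt}|\gamma|^2 = 2h$ together with the scalar bounds
\[
    |h'(t) - 1| \;\leq\; \mu(t)/R, \qquad |\mu'(t)| \;\leq\; |h(t)|/R
\]
(the second wherever $\mu > 0$), which come from $|\gamma\cdot\gamma''| = |(\gamma - h\gamma')\cdot\gamma''|\leq \mu/R$ (using $\gamma'\cdot\gamma'' = 0$) and the identity $\mu\mu' = h(1 - h')$. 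In the planar case these bounds are equalities with a signed curvature $\kappa \in [-1/R, 1/R]$, giving exactly Tilli's closed system $h' = 1 + \kappa\mu$, $\mu' = -\kappa h$. The key lemma I will establish is that any trajectory satisfying these bounds starting at $(0, R)$ remains outside the open disk $\{h^2 + \mu^2 < R^2\}$ for all $t \in [0, \pi R]$, with equality attained only by the canonical extremal trajectory, which corresponds to $\gamma$ lying on $\partial B_R(0)$. Granted this, $|\gamma(t)|^2 = h(t)^2 + \mu(t)^2 \geq R^2$ on $[0, l]$, a contradiction.

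The main obstacle is proving the $(h,\mu)$-phase-portrait lemma and justifying that the higher-dimensional inequality version still yields the same sharp bound. In dimension two it is essentially Tilli's argument, which uses an explicit phase-plane analysis of the closed ODE system together with the length constraint $l \leq \pi R$ (the extremal being the trajectory that traces the great circle on $\partial B_R(0)$, for which $h\equiv 0$ and $\mu\equiv R$). In higher dimension one checks that the worst-case direction for $\gamma''$ to point at each instant lies in the $2$-plane spanned by $\gamma'(t)$ and $\gamma(t) - h(t)\gamma'(t)$, so the extremization of the relevant scalar inequalities reduces to the planar equality case. This planar reduction, and the ensuing phase-portrait analysis, is the technical crux of the argument.
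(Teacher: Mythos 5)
The convexity argument disposing of the case $a=0$ is correct, but after that the proposal is a plan rather than a proof, and the plan has a genuine gap at its crux: the ``phase-portrait lemma'' carrying all the content is never proved, and in the form you state it, it is false. The two scalar bounds $|h'-1|\le \mu/R$ and $|\mu'|\le |h|/R$ with initial data $(h,\mu)(0)=(0,R)$ do \emph{not} by themselves force $h^2+\mu^2\ge R^2$ on $[0,\pi R]$. Take $R=1$ and run $h'=1+\mu$, $\mu'=|h|$ for a short time $\eps$ (so $h\approx 2\eps$, $\mu\approx 1$, $h^2+\mu^2\approx 1+O(\eps^2)$), then switch to $h'=1-\mu$, $\mu'=-|h|$; both inequalities hold, yet now $\frac{d}{dt}(h^2+\mu^2)=2hh'+2\mu\mu'=2h(1-2\mu)\approx-4\eps<0$, so after a time of order one the trajectory is inside the disc, well before $t=\pi$. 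What protects an actual curve is the identity $\frac{d}{dt}(h^2+\mu^2)=\frac{d}{dt}|\gamma(t)|^2=2h$ (equivalently $hh'+\mu\mu'=h$), which your reduction to the two differential inequalities discards; so the reduction is lossy and the key claim would have to be reproved with this coupling retained. Likewise, the assertion that the worst-case direction of $\gamma''(t)$ lies in the $2$-plane spanned by $\gamma'(t)$ and $\gamma(t)-h(t)\gamma'(t)$ is precisely the nontrivial step in passing from Tilli's planar statement to $\mathbb R^d$, and it is only asserted. Since the sharp constant $\pi R$ enters exactly through this missing analysis, the proof is incomplete where it matters.

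For comparison, the paper's proof avoids any ODE or phase-plane analysis. It first observes that it suffices to show $|\gamma(l)|\ge R$, since the hypotheses are inherited by every subcurve $\gamma|_{[0,t]}$. Then it uses that $\gamma'$ is $R^{-1}$-Lipschitz as a map into the unit sphere, hence $\gamma'(t)\cdot\gamma'(s)\ge\cos\frac{|t-s|}{R}$, to get $\bigl|\int_0^l\gamma'(t)\,dt\bigr|^2\ge 2R^2\bigl(1-\cos\frac{l}{R}\bigr)$, together with the coordinate bounds $x_d(t)\ge\cos\frac{t}{R}$ and $|x_1(t)|\le\sin\frac{t}{R}$; a short three-case computation (according to whether $l\le \pi R/2$ and whether $\bigl|\int_0^l x_1\bigr|\le R-R\cos\frac{l}{R}$) then yields $|\gamma(l)|\ge R$. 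If you wish to salvage your route, you must prove the constrained phase-portrait statement (with $hh'+\mu\mu'=h$) up to time $\pi R$ and genuinely justify the planar reduction; as written, neither is done.
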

\begin{proof}
    Our arguments essentially repeat the arguments from the proof of~\cite[Lemma~2.2,~Lemma~2.3]{tilli2010some}, modulo some minor adjustments needed to work out the non-planar setup. We include them for the sake of completeness.
    
    Without loss of generality we can assume that $R = 1$ and $d\geq 2$. In this case it is enough to prove that
    \begin{equation}
        \label{eq:tillis_lemmas0}
        |\gamma(l)|^2 = \left|\gamma(0) + \int_0^l\gamma'(t)\,dt\right|^2 \geq 1.
    \end{equation}
    We begin by proving that
    \begin{equation}
        \label{eq:tillis_lemmas1}
        \left| \int_0^l \gamma'(t)\,dt \right|^2\geq 2(1-\cos l).
    \end{equation}
    To this end write
    \begin{equation*}
        \left| \int_0^l \gamma'(t)\,dt \right|^2 = \int_0^l\int_0^l \gamma'(t)\cdot \gamma'(s)\,dtds \geq \int_0^l\int_0^l \cos(t-s)\,dtds = 2(1-\cos l),
    \end{equation*}
    where the inequality follows from the fact that $\gamma'(t)$ is 1-Lipshitz as a map from $[0,l]$ to the unit sphere endowed with the standard metric. Write
    \[
        \gamma'(t) = (x_1(t), \dots, x_d(t)).
    \]
    By the same reason as above we have $x_d(t)\geq \cos t$ for all $t\in [0,l]$, whence
    \begin{equation}
        \label{eq:tillis_lemmas2}
        \left(\int_0^lx_d(t)\,dt\right)^2\geq (\sin l)^2.
    \end{equation}
    We now prove~\eqref{eq:tillis_lemmas0} by considering three separate cases.

    \emph{Case 1: $l\leq \pi/2$}. In this case we can use the Lipshitzness of $\gamma'$ again to conclude that $x_1(t)\leq \sin(t)$ and thus
    \begin{equation}
        \label{eq:tillis_lemmas3}
        \left(1+\int_0^lx_1(t)\,dt\right)^2\geq (\cos l)^2
    \end{equation}
    which implies~\eqref{eq:tillis_lemmas0} if combined with~\eqref{eq:tillis_lemmas2}.

    \emph{Case 2: $\left|\int_0^l x_1(t)\,dt\right|\leq 1-\cos l$}. In this case we can deduce~\eqref{eq:tillis_lemmas0} expanding
    \[
        \left|\gamma(0) + \int_0^l\gamma'(t)\,dt\right|^2 = \left| \int_0^l \gamma'(t)\,dt \right|^2 + 2\int_0^lx_1(t)\,dt + 1
    \]
    and using~\eqref{eq:tillis_lemmas1}.

    \emph{Case 3: $l>\pi/2$ and $\left|\int_0^l x_1(t)\,dt\right|> 1 - \cos l$}. Since $\cos l < 0$ this implies~\eqref{eq:tillis_lemmas3} and we can proceed as in the first case.
\end{proof}

\begin{proof}[Proof of Corollary~\ref{cor:length+curv}]
    Let us prove that $\gamma$ satisfies the condition from item (ii) of Theorem~\ref{theorem:c11}. We prove it by contradiction. Let, as usual, $l$ be the length of $\gamma$ and $\gamma:[0,l]\to \mathbb R^d$ denote the arc length parametrization. Assume that we can find a point $p$ in the $R$-neighborhood of $\gamma$ which has two closest points on $\gamma$. Without loss of generality we can assume that these two points are $\gamma(0)$ and $\gamma(l)$. Recall that $\gamma'(0)$ and $\gamma'(l)$ denote the one-sided tangent vectors to $\gamma$ oriented along $\gamma$. Note that
    \[
        (p-\gamma(0))\cdot \gamma'(0) \leq 0,\qquad (p - \gamma(l))\cdot \gamma'(l)\geq 0.
    \]
    Let $R' = |\gamma(0) - p|$.
    
    Assume that $(p-\gamma(0))\cdot \gamma'(0) = (p - \gamma(l))\cdot \gamma'(l) = 0$, i.e. the sphere $\partial B_{R'}(p)$ is tangent to $\gamma$ at $\gamma(0)$ and $\gamma(l)$. It follows that the sphere
    \[
        \partial B_R\left(\gamma(0) + \frac{R}{R'}(p - \gamma(0))\right)
    \]
    is tangent to $\gamma$ at $\gamma(0)$ and contains $\gamma(l)$ in its interior which contradicts with Lemma~\ref{lemma:tillis_lemmas}. Note in particular that these arguments imply that $\gamma$ is simple.

    Assume now that $(p-\gamma(0))\cdot \gamma'(0) < 0$. Applying a translation and rotation if necessary we can assume that $\gamma(0) = 0$ and $\gamma'(0) = (0,\dots, 0,1)$. Put
    \[
    \begin{split}
        &S = \{0\}\cup \bigcup_{x = (x_1,\dots, x_{d-1},0)\ \colon |x| = R} B_R(x),\\
        &C = \{x \in B_R(0)\ \mid\ x_d<0 \},\\
        &\mathbb H^+ = \{x\in \mathbb R^d\ \mid\  x_d > 0\},\\
        &\mathbb H^- = \{x\in \mathbb R^d\ \mid\  x_d < 0\}.
    \end{split}
    \]
    By our assumption $p\in C$, and by Lemma~\ref{lemma:tillis_lemmas}
    \begin{equation}
        \label{eq:lc1}
        \gamma((0,l])\cap S = \varnothing.
    \end{equation}
    Note that 
    \[
        \partial B_{R'}(p)\smallsetminus S\subset \mathbb H^-.
    \]
    With the definition of $p$ this implies that $\gamma(l)\in \mathbb H^-$, therefore, by~\eqref{eq:lc1} and the fact that $\gamma'(0) = (0,\dots, 0,1)$, the curve $\gamma$ must cross the hyperplane $\{x_d = 0\}$ at some point outside $S$. Let $l_0\in (0,l)$ be the first time it happens, note that $\gamma((0,l_0))\subset \mathbb H^+$. Define $\Phi: \mathbb R^d\to \mathbb R^2$ by $\Phi(x_1,\dots, x_d) = \left(\sqrt{x_1^2+\ldots+x_{d-1}^2}, x_d\right)$ and put
    \[
        \gamma_1 = \Phi(\gamma([0,l_0])).
    \]
    Then $\gamma_1$ is a planar curve connecting the origin with a point on the positive half of the X-axis within the upper half-plane. Notice that $\gamma_1$ does not intersect the open disc $\{(x-R)^2 + y^2 < R^2\}$ because of~\eqref{eq:lc1}, and notice also that the length of $\gamma_1$ is at most $l_0$ because $\Phi$ is 1-Lipshitz.

    Let $\gamma_2$ be the mirror image of $\gamma_1$ under the reflection with respect to the X-axis, and let $\bar{\gamma} = \gamma_1\cup \gamma_2$. Then $\bar{\gamma}$ is a closed curve and one of the bounded connected components of $\mathbb R^2\smallsetminus \bar{\gamma}$ contains the disc $\{(x-R)^2 + y^2 < R^2\}$. It follows by the isoperimetric inequality that the length of $\bar{\gamma}$ is at least $2\pi R$. But $l_0<l\leq \pi R$ by our assumptions, which leads to a contradiction.
\end{proof}

\begin{proof}[Proof of Corollary~\ref{cor:C11_r_small_enough}]
    Let $R>0$ be the minimal curvature radius of $\gamma$, let $l$ be the length of $\gamma$ and let $\gamma:[0,l]\to \mathbb R^d$ be the arc-length parametrization. Put
    \[
        \eps_1 = \frac{1}{2}\min\{|\gamma(t) - \gamma(s)|\ \mid\ 0\leq t\leq s+\pi R \leq l \};
    \]
    if the set on the right-hand side is empty, then put $\eps = R$, in the other case put $\eps = \min(\eps_1, R)$. Note that $\eps>0$ because $\gamma$ is simple. Corollary~\ref{cor:length+curv} and the definition of $\eps_1$ imply that $\gamma$ satisfies the condition from item~(ii) of Theorem~\ref{theorem:c11} with $R = \eps$. The corollary follows.
\end{proof}

\paragraph{Acknowledgements.} The authors are grateful to Fedor Petrov and Alexandr Polyanskii for useful discussions. 
The research is supported by <<Native towns>>, a social investment program of PJSC <<Gazprom Neft>>. The authors condemn the Russian invasion of Ukraine.
\bibliographystyle{plain}
\bibliography{main}

\begin{thebibliography}{10}

\bibitem{basok2018uniqueness}
Mikhail Basok, Danila Cherkashin, Nikita Rastegaev, and Yana Teplitskaya.
\newblock On uniqueness in {S}teiner problem.
\newblock {\em arXiv preprint arXiv:1809.01463}, 2018.

\bibitem{buttazzo2002optimal}
Giuseppe Buttazzo, Edouard Oudet, and Eugene Stepanov.
\newblock Optimal transportation problems with free {D}irichlet regions.
\newblock In {\em Variational methods for discontinuous structures}, pages
  41--65. Springer, 2002.

\bibitem{cherkashin2022overview}
Danila Cherkashin and Yana Teplitskaya.
\newblock An overview of maximal distance minimizers problem.
\newblock {\em arXiv preprint arXiv:2212.05607}, 2022.

\bibitem{fekete1998traveling}
S{\'a}ndor~P. Fekete and William~R. Pulleyblank.
\newblock Traveling the boundary of {M}inkowski sums.
\newblock {\em Information processing letters}, 66(4):171--174, 1998.

\bibitem{gilbert1968steiner}
E.~N. Gilbert and H.~O. Pollak.
\newblock Steiner minimal trees.
\newblock {\em SIAM Journal on Applied Mathematics}, 16(1):1--29, 1968.

\bibitem{gordeev2022regularity}
Alexey Gordeev and Yana Teplitskaya.
\newblock On regularity of maximal distance minimizers in {E}uclidean space.
\newblock {\em arXiv preprint arXiv:2207.13745}, 2022.

\bibitem{hwang1992steiner}
Frank~K. Hwang, Dana~S. Richards, and Pawel Winter.
\newblock {\em The {S}teiner tree problem}, volume~53.
\newblock Elsevier, 1992.

\bibitem{lemenant2011presentationENG}
Antoine Lemenant.
\newblock A presentation of the average distance minimizing problem.
\newblock {\em Journal of Mathematical Sciences}, 181(6):820--836, 2012.

\bibitem{melzak1961problem}
Z.~A. Melzak.
\newblock On the problem of {S}teiner.
\newblock {\em Canadian Mathematical Bulletin}, 4(2):143--148, 1961.

\bibitem{miranda2006one}
M.~Miranda, Jr., E.~Paolini, and E.~Stepanov.
\newblock On one-dimensional continua uniformly approximating planar sets.
\newblock {\em Calc. Var. Partial Differential Equations}, 27(3):287--309,
  2006.

\bibitem{MorelSolimini}
Jean~Michel Morel and Sergio Solimini.
\newblock {\em Variational Methods in Image Segmentation}.
\newblock Number~1 in Progress in Nonlinear Differential Equations and Their
  Applications. Birkh{\"a}user Boston, MA, 1992.

\bibitem{mosconi2005gamma}
Sunra J.~N. Mosconi and Paolo Tilli.
\newblock ${\Gamma}$-convergence for the irrigation problem.
\newblock {\em J. Convex Anal}, 12(1):145--158, 2005.

\bibitem{oblakov2009non}
Konstantin~I. Oblakov.
\newblock Non-existence of distinct codirected locally minimal trees on a
  plane.
\newblock {\em Moscow University Mathematics Bulletin}, 64(2):62--66, 2009.

\bibitem{pach2011combinatorial}
J{\'a}nos Pach and Pankaj~K. Agarwal.
\newblock {\em Combinatorial geometry}.
\newblock John Wiley \& Sons, 2011.

\bibitem{PaoSte04max}
E.~Paolini and E.~Stepanov.
\newblock Qualitative properties of maximum distance minimizers and average
  distance minimizers in $\mathbb{R}^n$.
\newblock {\em J. Math. Sci. (N. Y.)}, 122(3):3290--3309, 2004.
\newblock Problems in mathematical analysis.

\bibitem{paolini2004qualitative}
Emanuele Paolini and Eugene Stepanov.
\newblock Qualitative properties of maximum distance minimizers and average
  distance minimizers in $\mathbb{R}^n$.
\newblock {\em Journal of Mathematical Sciences}, 122(3):3290--3309, 2004.

\bibitem{tilli2010some}
Paolo Tilli.
\newblock Some explicit examples of minimizers for the irrigation problem.
\newblock {\em J. Convex Anal}, 17(2):583--595, 2010.

\end{thebibliography}

\end{document}